  \theoremstyle{plain}
\newtheorem{theorem}{Theorem}[section]
\newtheorem{corollary}[theorem]{Corollary}
\newtheorem{lemma}[theorem]{Lemma}
\newtheorem{proposition}[theorem]{Proposition}
\newtheorem{example}[theorem]{Example}
\theoremstyle{definition}
\newtheorem{definition}[theorem]{Definition}
\theoremstyle{remark}
\newtheorem{remark}[theorem]{Remark}
\numberwithin{equation}{section}
\def\@settitle{\begin{center}%
  \baselineskip14\p@\relax
    %\bfseries
    \normalfont\LARGE%<- NEW
  \@title
  \end{center}%
}
\renewcommand{\email}[2][]{%
  \ifx\emails\@empty\relax\else{\g@addto@macro\emails{,\space}}\fi%
  \@ifnotempty{#1}{\g@addto@macro\emails{\textrm{(#1)}\space}}%
  \g@addto@macro\emails{#2}%
}
\begin{document}

\title{${\mathcal{O}}$-O{perators} {on Hom-Lie algebras}}
\author[Satyendra Kumar Mishra and Anita Naolekar]{\textbf\normalfont\large S\MakeLowercase{atyendra} K\MakeLowercase{umar} M\MakeLowercase{ishra and }A\MakeLowercase{nita} N\MakeLowercase{aolekar}}
\email{satyamsr10@gmail.com}
\email{anita@isibang.ac.in}
\address{Statistics and Mathematics Unit, Indian Statistical Institute, Bangalore center, India.}
%\author{ 
\footnotetext{AMS Mathematics Subject Classification (2010): $17$B$61,$ $17$A$30,$ $17$B$99,$ $16$T$25$.}
\footnotetext{{\it{Keyword}}: $\mathcal{O}$-operators; Hom-Lie algebras; Hom-pre-Lie algebras; deformation cohomology; differential graded Lie algebra.}
\begin{abstract}
$\mathcal{O}$-operators (also known as relative Rota-Baxter operators) on Lie algebras have several applications in integrable systems and the classical Yang-Baxter equations. In this article, we study $\mathcal{O}$-operators on hom-Lie algebras. We define cochain complex for $\mathcal{O}$-operators on hom-Lie algebras with respect to a representation. Any $\mathcal{O}$-operator induces a hom-pre-Lie algebra structure. We express the cochain complex of an $\mathcal{O}$-operator in terms of certain hom-Lie algebra cochain complex of the sub-adjacent hom-Lie algebra associated with the induced hom-pre-Lie algebra. If the structure maps in a hom-Lie algebra and its representation are invertible, then we can extend the above cochain complex to a deformation complex for $\mathcal{O}$-operators by adding the space of zero cochains. Subsequently, we study linear and formal deformations of $\mathcal{O}$-operators on hom-Lie algebras in terms of the deformation cohomology. In the end, we deduce deformations of $s$-Rota-Baxter operators (of weight 0) and skew-symmetric $r$-matrices on hom-Lie algebras as particular cases of $\mathcal{O}$-operators on hom-Lie algebras.   
\end{abstract}
\maketitle

\section{\normalfont\large\textbf{Introduction}}
%, in the context of some particular deformation called $q$-deformations of Witt and Virasoro algebra of vector fields. In \cite{HALG}, hom-associative algebras are introduced as hom-Lie admissible algebras. Later on, many essential results on hom-Lie algebras and hom-associative algebras followed in \cite{DefHLIE,  HLIE01, QuasiHL1, LGMT18, HALG,  NtHOM, Sheng, UniHLie1, HALG2}. These type of algebras are becoming very popular and several new hom-algebraic structures such as hom-Hopf algebras \cite{HAD09, MSCO10, Yau3}, hom-Jordan algebras \cite{HJRD10} and hom-Poisson algebras \cite{Oliver, hom-Lie, NtHOM} are widely studied. A categorical interpretation of hom-algebra structures is given by S. Caenepeel and I. Goyvaerts in \cite{CGMON}.

%

In this article, we deal with a specific type of algebraic structures, called `hom-algebraic structures'. The defining identities of such an algebraic structure are twisted by endomorphisms. These structures first appeared in the form of hom-Lie algebras \cite{HLIE01}, in the context of deformation of Witt and Virasoro algebras using a more general type of derivations called `$\sigma$-derivations'. The approach of deformations using $\sigma$-derivations yields new deformations of Lie algebras and their central extensions. In particular, $q$-deformations of Witt and Virasoro algebras (For instance, see  \cite{q1,q3,q4,q5,q2,QuasiHL1,q7,q8, q9, q10} for some of such constructions) can be described using $\sigma$-derivations. 

Our main objective is to study $\mathcal{O}$-operators, also known as relative or generalized Rota-Baxter operators on hom-Lie algebras. In 1960, G. Baxter \cite{Baxter} first introduced the notion of Rota–Baxter operators for associative algebras. The Rota-Baxter operators have several applications in probability \cite{Baxter}, combinatorics \cite{Cartier, Guo, Rota}, and quantum field theory \cite{Connes}. In the 1980s, the notion of Rota-Baxter operator of weight $0$ was introduced in terms of the classical Yang-Baxter equation for Lie algebras (see \cite{Survey-Guo} for more details). Later on, B. A. Kupershmidt \cite{Kupershmidt} defined the notion of $\mathcal{O}$-operators as generalized Rota-Baxter operators to understand classical Yang-Baxter equations and related integrable systems.  

Deformation of an algebraic (or analytical) object is a fundamental tool to study the object. In this article, we focus on formal deformation theory of algebraic structures that was first introduced by M. Gerstenhaber for associative algebras in his monumental work \cite{Ger63}-\cite{Ger74}. The deformations of an algebraic structure are controlled by suitable deformation cohomology. For associative algebras, the deformation cohomology is given by the Hochschild cohomology, and in the Lie algebra case, it is the Chevalley-Eilenberg cohomology (see \cite{NR66, NR67a, NR67b}). Likewise, for hom-Lie algebras and hom-associative algebras, there are cohomologies with coefficients in adjoint representations (generalizing Chevalley-Eilenberg and Hochschild cohomology, respectively in \cite{DefHLIE}), playing the role of the required deformation cohomologies. More recently, in \cite{Sheng3}, the authors developed formal deformation theory for $\mathcal{O}$-operators on Lie algebras. They constructed a differential graded Lie algebra (dgLa) and characterized $\mathcal{O}$-operators as Maurer-Cartan elements of this dgLa, which in turn allows them to introduce deformation cohomology for $\mathcal{O}$-operators. 

In this paper, we develop a cohomology for $\mathcal{O}$-operators on hom-Lie algebras. For this purpose, we follow the approach developed in \cite{Sheng3}. In particular, we use the graded Lie algebra structure on the deformation complex of hom-Lie algebras \cite{DefHLIE} and derived bracket construction by Voronov \cite{Voronov} to obtain an explicit graded Lie algebra. We characterize $\mathcal{O}$-operators on hom-Lie algebras as Maurer-Cartan elements of this graded Lie algebra, which allows us to construct a cochain complex defining the cohomology for an $\mathcal{O}$-operator on hom-Lie algebras. We interpret this cohomology as the hom-Lie algebra cohomology of a particular hom-Lie algebra with coefficients in a suitable representation. In the sequel, we show that if the structure maps are invertible, the cochain complex mentioned above extends to a deformation complex for $\mathcal{O}$-operators on hom-Lie algebras.

The section-wise summary of the paper is as follows.
In Section $2$, we recall some basic definitions and results on hom-Lie algebras and their representations. We define a modified complex for regular hom-Lie algebras with coefficients in a representation. We characterize hom-Lie algebras equipped with a representation in terms of Maurer-Cartan elements of a graded Lie algebra (defined in \cite{DefHLIE}).
 
In Section $3$, we define the notion of $\mathcal{O}$-operators on a hom-Lie algebra with respect to a representation. There is a graded Lie algebra structure on the deformation complex of hom-Lie algebra, which we use in order to obtain a graded Lie algebra whose Maurer-Cartan elements are $\mathcal{O}$-operators on hom-Lie algebras with respect to a representation. With this characterization, we get a differential graded Lie algebra associated to an $\mathcal{O}$-operator. Consequently, we define a cohomology for $\mathcal{O}$-operators on hom-Lie algebras with respect to a representation. We show that an $\mathcal{O}$-operator $T:V\rightarrow \mathfrak{g}$ on a hom-Lie algebra $(\mathfrak{g},[~,~],\alpha)$ with respect to a representation $(V,\beta,\rho)$, induces a hom-pre-Lie algebra structure on $V$. Furthermore, the cohomology of the $\mathcal{O}$-operator $T$ can be given in terms of the hom-Lie algebra cohomology of the sub adjacent hom-Lie algebra $(V,[~,~]^c,\beta)$ of the induced pre-Lie algebra with coefficients in a representation $(\mathfrak{g},\alpha,\rho_T)$. In the case when the structure maps $\alpha$ and $\beta$ are invertible, we define an extended cochain complex (including $0$-cochains) $\big(\widetilde{C}^*_{\beta,\alpha}(V,\mathfrak{g}),\delta_{\beta,\alpha}\big)$ for the $\mathcal{O}$-operator $T:V\rightarrow \mathfrak{g}$ on the hom-Lie algebra $(\mathfrak{g},[~,~],\alpha)$ with respect to the representation $(V,\beta,\rho)$.   

In Section $4$, we discuss deformations of $\mathcal{O}$-operators on hom-Lie algebras. For this section, we consider the structure maps to be invertible for both the hom-Lie algebras and their representations. We will see that the modified complex $\big(\widetilde{C}^*_{\beta,\alpha}(V,\mathfrak{g}),\delta_{\beta,\alpha}\big)$ is not possible unless we assume invertibility of the structure maps. This modified complex gives us deformation cohomology of $\mathcal{O}$-operators. First, we consider linear deformations of $\mathcal{O}$-operators. We discuss trivial linear deformations in terms of Nijenhuis elements. Then we consider formal deformations of $\mathcal{O}$-operators. We show that two equivalent formal deformations of $\mathcal{O}$-operators have cohomologous infinitesimals. We also consider the problem of extending a finite order deformation to the next higher-order deformation. 

In the last section, we consider $s$-Rota-Baxter operators of weight $0$ and skew-symmetric $r$-matrices on hom-Lie algebras as particular cases of $\mathcal{O}$-operators. Consequently, we discuss deformations of $s$-Rota-Baxter operators of weight $0$ and skew-symmetric $r$-matrices on hom-Lie algebras.

\section{\normalfont\large\textbf{{Preliminaries}}}
Throughout the paper, we consider the base field to be $k$ with characteristic zero. All linear maps and tensor products are taken over the field $k$ unless otherwise stated.

\subsection{Hom-Lie algebras}
 Let us recall some definitions and results related to hom-Lie algebras and their representations. 

\begin{definition}
 A (multiplicative) hom-Lie algebra is a triplet $(\mathfrak{g},[~,~],\alpha)$, where $\mathfrak{g}$ is a vector space equipped with a skew-symmetric bilinear map $[~,~]:\mathfrak{g}\otimes \mathfrak{g}\rightarrow \mathfrak{g}$, and a linear map $\alpha:\mathfrak{g}\rightarrow \mathfrak{g}$ satisfying $\alpha[x,y]=[\alpha(x),\alpha(y)]$ such that
\begin{equation}
[\alpha(x),[y,z]]+[\alpha(y),[z,x]]+[\alpha(z),[x,y]]=0, ~~~~\mbox{for all}~~x,y,z\in \mathfrak{g}.
\end{equation}
Furthermore, if $\alpha:\mathfrak{g}\rightarrow \mathfrak{g}$ is a vector space automorphism of $\mathfrak{g}$, then the hom-Lie algebra $(\mathfrak{g},[~,~],\alpha)$ is called a regular hom-Lie algebra.
\end{definition}

\begin{example}
Given a Lie algebra $\mathfrak{g}$ with a Lie algebra homomorphism $\alpha:\mathfrak{g}\rightarrow \mathfrak{g}$, we can define a hom-Lie algebra as the triplet $(\mathfrak{g},\alpha\circ [~,~],\alpha)$, where $[~,~]$ is the underlying Lie bracket. 
\end{example}

\begin{definition}\label{Rep-hom-Lie}(\cite{Sheng}) A representation of a hom-Lie algebra $(\mathfrak{g},[~,~],\alpha)$ on a $k$-vector space $V$ with respect to $\beta\in\mathsf{End}(V)$ is a linear map $\rho: \mathfrak{g}\rightarrow \mathsf{End}(V)$ such that 
\begin{equation}
\rho(\alpha(x))(\beta(v))= \beta(\rho(x)(v)),
\end{equation}
\begin{equation}
\rho([x,y])(\beta(v))= \rho(\alpha(x))\rho(y)(v)-\rho(\alpha(y))\rho(x)(v),
\end{equation}
for all $x,y \in \mathfrak{g}$ and $v\in V$. 
%A representation $(V,\beta,\rho)$ is said to be 'regular' if the map $\beta:V\rightarrow V$ is a vector space automorphism of $V$.
\end{definition}
Let us denote a representation $\rho$ on $V$ with respect to $\beta\in \mathsf{End}(V)$ by a triple $(V,\beta,\rho)$. Throughout the paper, for simplification, we use the notation
$$\{x,v\}:=\rho(x)(v), \quad\mbox{for all } x\in \mathfrak{g}, ~v\in V.$$

\begin{example}[\cite{Sheng}]\label{adjoint representation}
 For any integer $s\geq 0$, we can define the $\alpha^s$-adjoint representation of a hom-Lie algebra $(\mathfrak{g},[~,~],\alpha)$ on $\mathfrak{g}$ as follows
$$\mathsf{ad}_x^s(y):=[\alpha^s(x),y]\quad\mbox{for all}~x,y\in \mathfrak{g}.$$
Let us denote the $\alpha^s$-adjoint representation of the hom-Lie algebra $(\mathfrak{g},[~,~],\alpha)$ by the triple $(\mathfrak{g},\alpha,\mathsf{ad}^s)$. Also, we denote $\mathsf{ad}_x^0$ simply by $\mathsf{ad}_x$ for any $x\in\mathfrak{g}$.
\end{example}

\begin{example}[\cite{hom-Liebi}]\label{coadjoint rep}
Let $(\mathfrak{g},[~,~],\alpha)$ be a regular hom-Lie algebra and $(V,\beta,\rho)$ be a hom-Lie algebra representation with $\beta$ an invertible linear map. Let us define a map $\rho^\star:\mathfrak{g}\rightarrow\mathsf{End}(V^*)$ by
\begin{align*}
\langle\rho^{\star}(x)(\xi),v\rangle :=&\langle\rho^*(\alpha(x))((\beta^{-2})^*(\xi)),v\rangle \\
=&\langle\xi,\rho(\alpha^{-1}(x))(\beta^{-2}(v))\rangle,
\end{align*}
for all $x\in\mathfrak{g}$ and $v\in V$. Then the triplet $(V^*,(\beta^{-1})^*,\rho^\star)$ is a representation of the hom-Lie algebra $(\mathfrak{g},[~,~],\alpha)$ on the dual vector space $V^*$ with respect to the map $(\beta^{-1})^*$. This is also known as the `dual representation' to $(V,\beta,\rho)$. 

In particular, let us also recall that the `coadjoint representation' of a regular hom-Lie algebra $(\mathfrak{g},[~,~],\alpha)$ on $\mathfrak{g}^*$ with respect to the map $(\alpha^{-1})^*$ is given by the triplet $(\mathfrak{g}^*,(\alpha^{-1})^*,\mathsf{ad}^\star)$, where
$$\langle \mathsf{ad}^\star(x)(\xi),y\rangle=\langle\xi, [\alpha^{-1}(x),\alpha^{-2}(y)]\rangle, \quad\mbox{for all }x,y\in\mathfrak{g},~\xi\in \mathfrak{g}^*.$$
\end{example}

\begin{lemma}\label{direct sum}
Let $(\mathfrak{g},[~,~],\alpha)$ be a hom-Lie algebra and $V$ be a vector space equipped with a linear map $\beta\in \mathsf{End}(V)$ and an action $\rho:\mathfrak{g}\otimes V\rightarrow V$. Then, the triplet $(V,\beta,\rho)$ is a hom-Lie algebra representation on $V$ if and only if $(\mathfrak{g}\oplus V,[~,~]_{\rho},\alpha+\beta)$ is a hom-Lie algebra, where the bracket $[~,~]_{\rho}$ and the map $\alpha+\beta$ are given by 
\begin{align*}
[x+v,y+w]_{\rho}&=[x,y]+\rho(x)(w)-\rho(y)(v),\\
(\alpha+\beta)(x,v)&=(\alpha(x),\beta(v)),\quad\mbox{for all }x,y\in \mathfrak{g}~\mbox{and } v,w\in V.
\end{align*}
\end{lemma}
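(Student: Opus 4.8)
The plan is to verify the three defining conditions of a hom-Lie algebra for the triplet $(\mathfrak{g}\oplus V,[~,~]_{\rho},\alpha+\beta)$ and to observe that, upon decomposing each condition into its $\mathfrak{g}$- and $V$-components, the $\mathfrak{g}$-components reduce to the hom-Lie axioms already satisfied by $(\mathfrak{g},[~,~],\alpha)$, while the $V$-components are exactly the representation conditions~(2.2) and~(2.3). Since every equation involved must hold for all inputs, each direction of the stated equivalence follows by matching components. Skew-symmetry of $[~,~]_{\rho}$ is immediate from the skew-symmetry of $[~,~]$ and the antisymmetric shape $\rho(x)(w)-\rho(y)(v)$ of the mixed term, so no hypothesis is needed there.

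First I would treat multiplicativity. Writing $X=x+v$ and $Y=y+w$, a direct expansion shows that the $\mathfrak{g}$-component of $(\alpha+\beta)[X,Y]_{\rho}=[(\alpha+\beta)(X),(\alpha+\beta)(Y)]_{\rho}$ is precisely $\alpha[x,y]=[\alpha(x),\alpha(y)]$, which holds by multiplicativity in $\mathfrak{g}$. The $V$-component reads
\[
\beta(\rho(x)(w))-\beta(\rho(y)(v))=\rho(\alpha(x))(\beta(w))-\rho(\alpha(y))(\beta(v)).
\]
Setting $v=0$ isolates $\beta(\rho(x)(w))=\rho(\alpha(x))(\beta(w))$, which is condition~(2.2); conversely, (2.2) makes the displayed identity hold termwise. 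This settles the multiplicativity requirement in both directions.

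Next I would expand the hom-Jacobi identity for $X=x+u$, $Y=y+v$, $Z=z+w$. The $\mathfrak{g}$-component of $[(\alpha+\beta)(X),[Y,Z]_{\rho}]_{\rho}+\text{(cyclic)}$ is exactly the sum $[\alpha(x),[y,z]]+[\alpha(y),[z,x]]+[\alpha(z),[x,y]]$, which vanishes by the hom-Jacobi identity in $\mathfrak{g}$. For the $V$-component I would collect the resulting nine operator terms and regroup them according to which of $u$, $v$, $w$ they act on. The key computation is that the three terms acting on $w$ assemble into $\rho(\alpha(x))\rho(y)(w)-\rho(\alpha(y))\rho(x)(w)-\rho([x,y])(\beta(w))$, and similarly for the $u$- and $v$-blocks after using the cyclic symmetry; each block is precisely a cyclic instance of condition~(2.3). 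Thus (2.3) forces the $V$-component to vanish, proving the forward direction. For the converse, specializing the inputs (for instance $u=v=0$, so that only the $w$-block survives) recovers (2.3) on the nose, since $\rho$ is linear in its $V$-argument.

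The main obstacle is purely organizational: the hom-Jacobi expansion produces a cyclic sum of nine operator terms in the $V$-component, and one must regroup them by their vector argument and match each block against a cyclic instance of~(2.3). Once the grouping is done carefully the identifications are forced, and the ``for all inputs'' quantifier is what lets us read off each representation axiom from a single component, giving both implications simultaneously.
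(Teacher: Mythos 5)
Your proposal is correct: the componentwise verification (skew-symmetry for free, the $V$-component of multiplicativity giving condition (2.2) via $v=0$, and the nine-term $V$-component of the hom-Jacobi identity regrouping into three cyclic instances of (2.3)) is exactly the standard argument. The paper itself omits the proof entirely, declaring it straightforward and deferring to the reference \cite{Sheng}, so your write-up supplies precisely the computation the authors had in mind.
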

\begin{proof}
The proof of the lemma is straightforward (see \cite{Sheng} for details).
\end{proof}
The hom-Lie algebra $(\mathfrak{g}\oplus V,[~,~]_{\rho},\alpha+\beta)$ is called the semi-direct product hom-Lie algebra for a hom-Lie algebra $(\mathfrak{g},[~,~],\alpha)$ with a representation $(V,\beta,\rho)$.
\subsection{A cochain complex for hom-Lie algebras}
Let $(\mathfrak{g},[~,~],\alpha)$ be a hom-Lie algebra with a representation $(V, \beta, \rho)$ on a vector space $V$. We define a cochain complex $(C^*_{\alpha,\beta}(\mathfrak{g},V),\delta_{\alpha,\beta})$ for the hom-Lie algebra $(\mathfrak{g},[~,~],\alpha)$ with coefficients in the representation $(V, \beta, \rho)$. Here, 
$$C^*_{\alpha,\beta}(\mathfrak{g},V):=\bigoplus_{n\geq 1}C^n_{\alpha,\beta}(\mathfrak{g},V)$$
and $C^n_{\alpha,\beta}(\mathfrak{g},V)$ is a subspace of $\mathrm{Hom}(\wedge^n \mathfrak{g},V)$ consisting of all those linear maps $f:\wedge^n \mathfrak{g}\rightarrow V,$ which satisfy the following condition
$$f(\alpha(x_1),\cdots,\alpha(x_n))=\beta(f(x_1,x_2,\cdots,x_n)).$$
The coboundary map $\delta_{\alpha,\beta}:C^n_{\alpha,\beta}(\mathfrak{g},V)\rightarrow C^{n+1}_{\alpha,\beta}(\mathfrak{g},V)$ is given by 
\begin{equation}\label{coboundary:hom-Lie algebra}
\begin{split}
\delta_{\alpha,\beta} f(x_1,\cdots,x_{n+1}):= &\sum_{i=1}^{n+1}(-1)^{i+1}\rho(\alpha^{n-1}(x_i))(f(x_1,\cdots,\hat{x_i},\cdots,x_{n+1}))\\
&+\sum_{i<j} (-1)^{i+j}f([x_i,x_j],\alpha(x_1),\cdots,\hat{\alpha(x_i)},\cdots,\hat{\alpha(x_j)},\cdots,\alpha(x_{n+1}))
\end{split}
\end{equation}
for all $f\in C^n_{\alpha,\beta}(\mathfrak{g},V)$ and $x_1,x_2,\cdots x_{n+1}\in \mathfrak{g}$. Let us denote by $H^*_{\alpha,\beta}(\mathfrak{g},V)$, the cohomology space  associated to the cochain complex $(C^*_{\alpha,\beta}(\mathfrak{g},V),\delta_{\alpha,\beta})$. 

\begin{remark}
This cochain complex is different from the one defined in \cite{Sheng}. If $V=\mathfrak{g},$ $\beta=\alpha,$ and the action $\rho: \mathfrak{g}\otimes V\rightarrow V$ is given by the underlying hom-Lie bracket, we denote the above cochain complex by $(C^*_{\alpha}(\mathfrak{g},\mathfrak{g}),\delta_{\alpha})$. This complex is the same as the deformation complex of the hom-Lie algebra $(\mathfrak{g},[~,~],\alpha)$ (defined in \cite{DefHLIE}). The cohomology of the complex $(C^*_{\alpha}(\mathfrak{g},\mathfrak{g}),\delta_{\alpha})$ serves as deformation cohomology for hom-Lie algebra.
\end{remark}

\begin{remark}\label{regular hom-Lie algebra cohomology}
Let $(\mathfrak{g},[~,~],\alpha)$ be a regular hom-Lie algebra and $(V, \beta, \rho)$ be a hom-Lie algebra representation, where $\beta:V\rightarrow V$ is invertible. Then, one can define 
$$C^0_{\alpha,\beta}(\mathfrak{g},V):=\{v\in V~| ~\beta(v)=v\},$$
and the coboundary $\delta_{\alpha,\beta}:C^0_{\alpha,\beta}(\mathfrak{g},V)\rightarrow C^1_{\alpha,\beta}(\mathfrak{g},V)$ on $0$-cochains is given by
$$\delta_{\alpha,\beta}(v)(x):=\rho(\alpha^{-1}(x))(v),~\quad\mbox{for all }v\in C^0_{\alpha,\beta}(\mathfrak{g},V) ~\mbox{and}~ x\in \mathfrak{g}.$$
It follows that we have a modified cochain complex $\big(\widetilde{C}^*_{\alpha,\beta}(\mathfrak{g},V),\delta_{\alpha,\beta}\big)$, where 
$$\widetilde{C}^*_{\alpha,\beta}(\mathfrak{g},V):=\bigoplus_{n\geq 0} C^n_{\alpha,\beta}(\mathfrak{g},V).$$
We denote the associated cohomology by $\widetilde{H}^*_{\alpha,\beta}(\mathfrak{g},V)$.
\end{remark}

\subsection{Hom-Lie algebras and their representations in terms of Maurer-Cartan elements}
Let $\mathfrak{g}$ be a vector space equipped with a linear map $\alpha:\mathfrak{g}\rightarrow\mathfrak{g}$. Let us consider the graded vector space $C^*_{\alpha}(\mathfrak{g};\mathfrak{g})$. Then, we recall from  \cite{DefHLIE} that for any $\varphi\in C^p_{\alpha}(\mathfrak{g},\mathfrak{g})$ and $\psi\in C^q_{\alpha}(\mathfrak{g},\mathfrak{g})$, a circle product $\varphi\circ_\alpha \psi$ is defined by the following expression
\begin{equation}\label{Defofcirc}
\begin{split}
&(\varphi \circ_{\alpha} \psi)(x_1, x_2, \ldots, x_{p+q+1})\\ &= \textstyle{\sum\limits_{\tau \in Sh (q+1, p)} (-1)^{|\tau|}
\varphi \big(\psi (x_{\tau(1)}, \ldots, x_{\tau (q+1)}),\alpha^q(x_{\tau (q+2)}) , \ldots, \alpha^q(x_{\tau (p+q+1))}) \big)},
\end{split}
\end{equation}
where $Sh (q+1, p)$ denotes the set of $(q+1,p)$ shuffles in $S_{q+p+1}$ (the symmetric group on the set $\{1,2,\cdots,q+p+1\}$). For any permutation $\tau\in S_{q+p+1}$, the notation $|\tau|$ denotes the signature of the permutation $\tau$. 

We can define a bracket of degree $-1$ on the graded vector space $C^*_{\alpha}(\mathfrak{g},\mathfrak{g})$ in terms of the circle product
\begin{equation}\label{GLB}
[\varphi, \psi]_N^{\alpha} := (-1)^{pq} \varphi \circ_{\alpha} \psi - \psi \circ_{\alpha} \varphi.
\end{equation}

Thus, by a degree shift we obtain a graded Lie algebra structure on $C^{*-1}_{\alpha}(\mathfrak{g},\mathfrak{g})$. Hom-Lie algebra structures on $(\mathfrak{g},\alpha)$  corresponds bijectively to Maurer-Cartan elements of the graded Lie algebra $C^{*-1}_{\alpha}(\mathfrak{g},\mathfrak{g})$, i.e., the elements $\mu\in C^{2}_{\alpha}(\mathfrak{g},\mathfrak{g})$ satisfying $[\mu,\mu]_N^{\alpha}=0$. If a hom-Lie algebra structure $(\mathfrak{g},[~,~],\alpha)$ corresponds to such an element $\mu\in C^{2}_{\alpha}(\mathfrak{g},\mathfrak{g})$, one obtains a differential graded Lie algebra structure on $C^{*-1}_{\alpha}(\mathfrak{g},\mathfrak{g})$ with the differential $d_{\mu}=[\mu,-]_N^{\alpha}$. This differential $d_{\mu}$ coincides with the coboundary operator $\delta_{\alpha}$ given by equation \eqref{coboundary:hom-Lie algebra}.

Let $(\mathfrak{g},\alpha)$ and $(V,\beta)$ be vector spaces equipped with linear operators. We now consider the graded Lie algebra 
$$\textstyle{\big(\mathcal{G}^*:=C^{*-1}_{\alpha}(\mathfrak{g}\oplus V,\mathfrak{g}\oplus V),~[~,~]_N^{\alpha+\beta}\big)}$$ 
associated to the pair $(\mathfrak{g}\oplus V,\alpha+\beta)$. Let $\mu: \wedge^2(\mathfrak{g})\rightarrow\mathfrak{g}$ and $\rho:\mathfrak{g}\rightarrow \mathsf{End}(V)$ be linear maps. Then let us define a linear map $\mu+\rho:\wedge^2(\mathfrak{g}\oplus V)\rightarrow \mathfrak{g}\oplus V$ by
$$\mu+\rho(x+v,y+w)=\mu(x,y)+\rho(x)(w)-\rho(y)(v),\quad \mbox{for any }x,y\in \mathfrak{g}~~\mbox{and }v,w\in V.$$
With the above notations, we have the following proposition.

\begin{proposition}\label{Maurer-Cartan}
The map $\mu$ defines a hom-Lie algebra structure on the pair $(\mathfrak{g},\alpha)$ and the map $\rho$ defines a hom-Lie algebra representation on the pair $(V,\beta)$ if and only if $\mu+\rho$ is a Maurer-Cartan element of the graded Lie algebra $\textstyle{\big(\mathcal{G}^*,~[~,~]_N^{\alpha+\beta}\big)}$.
\end{proposition}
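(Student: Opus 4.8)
The plan is to deduce the statement from two ingredients already available: the bijective correspondence, recalled immediately above, between hom-Lie algebra structures on a pair and Maurer-Cartan elements of the associated graded Lie algebra, applied here to the pair $(\mathfrak{g}\oplus V,\alpha+\beta)$; and Lemma \ref{direct sum}. The key observation is that the map $\mu+\rho$ is literally the semi-direct product bracket $[~,~]_{\rho}$ of Lemma \ref{direct sum}, once $\mu$ is read as a candidate bracket on $\mathfrak{g}$, since $(\mu+\rho)(x+v,y+w)=\mu(x,y)+\rho(x)(w)-\rho(y)(v)$ agrees term by term with $[x+v,y+w]_{\rho}$. Because a Maurer-Cartan element of $\mathcal{G}^*$ is, by definition, an element $\pi\in C^2_{\alpha+\beta}(\mathfrak{g}\oplus V,\mathfrak{g}\oplus V)$ satisfying $[\pi,\pi]_N^{\alpha+\beta}=0$, and since the recalled correspondence identifies such elements with hom-Lie brackets, it follows that $\mu+\rho$ is a Maurer-Cartan element if and only if $(\mathfrak{g}\oplus V,\mu+\rho,\alpha+\beta)$ is a hom-Lie algebra. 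Thus the entire proposition reduces to showing that this semi-direct hom-Lie structure exists exactly when $\mu$ is a hom-Lie bracket and $\rho$ a representation.

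For the forward implication I would assume that $(\mathfrak{g},\mu,\alpha)$ is a hom-Lie algebra and $(V,\beta,\rho)$ a representation, and invoke the ``if'' part of Lemma \ref{direct sum} to conclude that $(\mathfrak{g}\oplus V,\mu+\rho,\alpha+\beta)$ is a hom-Lie algebra; by the correspondence, $\mu+\rho$ is then a Maurer-Cartan element.

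The reverse implication requires slightly more care, because Lemma \ref{direct sum} is stated under the standing hypothesis that $\mathfrak{g}$ is already a hom-Lie algebra, whereas a priori we are only handed that $\mu+\rho$ is Maurer-Cartan. I would first pass through the correspondence to get that $(\mathfrak{g}\oplus V,\mu+\rho,\alpha+\beta)$ is a hom-Lie algebra, and then observe that the copy $\mathfrak{g}\oplus 0$ is closed under the bracket (as $(\mu+\rho)(x,y)=\mu(x,y)\in\mathfrak{g}$) and under $\alpha+\beta$ (as $(\alpha+\beta)(x)=\alpha(x)$); restricting the skew-symmetry, multiplicativity, and hom-Jacobi identity of $\mu+\rho$ to $\mathfrak{g}\oplus 0$ then shows that $(\mathfrak{g},\mu,\alpha)$ is a genuine hom-Lie algebra. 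With this hypothesis now verified, the ``only if'' part of Lemma \ref{direct sum} applies and yields that $(V,\beta,\rho)$ is a representation.

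I expect the only real obstacle to be bookkeeping rather than mathematics. Specifically, one must track that membership of $\mu+\rho$ in $C^2_{\alpha+\beta}(\mathfrak{g}\oplus V,\mathfrak{g}\oplus V)$ --- the cochain condition --- encodes simultaneously the multiplicativity $\mu(\alpha(x),\alpha(y))=\alpha(\mu(x,y))$ (its $\mathfrak{g}$-component) and the first representation axiom $\rho(\alpha(x))(\beta(v))=\beta(\rho(x)(v))$ (its $V$-component), while the vanishing of $[\mu+\rho,\mu+\rho]_N^{\alpha+\beta}$ encodes the hom-Jacobi identity for $\mu$ together with the second representation axiom. Since Lemma \ref{direct sum} already packages exactly this decomposition, I would avoid any direct manipulation of the circle product $\circ_{\alpha+\beta}$ and let the lemma carry that computation.
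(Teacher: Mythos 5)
Your proposal is correct, but it takes a genuinely different route from the paper. The paper proves the proposition by direct computation: it unwinds the condition $\mu+\rho\in\mathcal{G}^1$ into the two compatibility identities, then expands $[\mu+\rho,\mu+\rho]_N^{\alpha+\beta}=-2\,(\mu+\rho)\circ_{\alpha+\beta}(\mu+\rho)$ and extracts its $\mathfrak{g}$- and $V$-components (by specializing the arguments, e.g.\ setting the $V$-parts to zero) to recover the hom-Jacobi identity for $\mu$ and the second representation axiom for $\rho$. You instead route everything through the recalled bijection between hom-Lie structures on $(\mathfrak{g}\oplus V,\alpha+\beta)$ and Maurer--Cartan elements of $\mathcal{G}^*$, reducing the proposition to the statement that $(\mathfrak{g}\oplus V,\mu+\rho,\alpha+\beta)$ is a hom-Lie algebra exactly when $\mu$ is a hom-Lie bracket and $\rho$ a representation, which is Lemma~\ref{direct sum} once the standing hypothesis of that lemma is secured. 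Your handling of the converse --- restricting to the subalgebra $\mathfrak{g}\oplus 0$ to first establish that $(\mathfrak{g},\mu,\alpha)$ is a hom-Lie algebra before invoking the lemma --- is exactly the point that needs care, and you address it correctly; the restriction also recovers skew-symmetry and multiplicativity of $\mu$, which belong to ``hom-Lie algebra structure'' and are needed for the lemma to apply. What your approach buys is the avoidance of any manipulation of the circle product, and it quietly sidesteps a step the paper leaves slightly informal (the claim that the full Maurer--Cartan equation on $\mathfrak{g}\oplus V$ is equivalent to the two component equations obtained by specialization, in both directions). What the paper's computation buys is self-containedness --- it does not lean on Lemma~\ref{direct sum}, whose proof is itself only referenced --- and it exhibits explicitly the component identities that the rest of Section~3 reuses when building the derived bracket. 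Both arguments ultimately rest on the same unproved input, namely the Maurer--Cartan characterization of hom-Lie structures from \cite{DefHLIE}, so neither is strictly more elementary; yours is shorter and more structural.
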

\begin{proof}
First, let us observe that the map $\mu+\rho\in \mathcal{G}^1$ if and only if 
$$\mu+\rho\big(\alpha+\beta(x+v),\alpha+\beta(y+w)\big)=(\alpha+\beta)\big(\mu+\rho(x+v,y+w)\big),$$
which is equivalent to the following expressions
$$\mu(\alpha(x),\alpha(y))=\alpha\big(\mu(x,y)\big)\quad \mbox{and}\quad \rho(\alpha(x))(\beta(v))=\beta(\rho(x)(v)),\quad \mbox{for all }x,y\in \mathfrak{g},~v\in V.$$
Moreover, the map $\mu+\rho$ is a Maurer-Cartan element if and only if 
\begin{equation}\label{square-zero}
\textstyle{[\mu+\rho,\mu+\rho]_N^{\alpha+\beta}(x+u,y+v,z+w)=-2~\big((\mu+\rho)\circ_{\alpha+\beta}(\mu+\rho)\big)(x+u,y+v,z+w)=0},
\end{equation}
or equivalently
\begin{align}\label{HJacobi n representation}
\nonumber
[[x,y],\alpha(z)]+[[y,z],\alpha(x)]+[[z,x],\alpha(y)]&=0,\quad \mbox{for all }x,y,z \in \mathfrak{g}\\
\rho([x,y])(\beta(w))- \rho(\alpha(x))\rho(y)(w)+\rho(\alpha(y))\rho(x)(w)&=0,\quad \mbox{for all }x,y\in \mathfrak{g}~~\mbox{and }w\in V.
\end{align} 
Note that we obtain the equation \eqref{HJacobi n representation} from equation \eqref{square-zero} by taking $u=v=0$. Hence, the proposition follows.
\end{proof}

\section{\normalfont\large\textbf{{$\mathcal{O}$-operators on hom-Lie algebras}}}

In this section, we define the notion of $s$-Rota-Baxter operators and $\mathcal{O}$-operators on hom-Lie algebras. We use the graded Lie algebra structure on the deformation complex of a hom-Lie algebra and derived bracket construction to define a graded Lie algebra whose Maurer-Cartan elements are precisely the $\mathcal{O}$-operators on hom-Lie algebras. Subsequently, we obtain a differential graded Lie algebra associated to an $\mathcal{O}$-operator.

\begin{definition}\label{def:Rota-Baxter operators}
Let $(\mathfrak{g},[~,~],\alpha)$ be a hom-Lie algebra and $s$ be a non-negative integer. Then, a linear operator $\mathcal{R}: \mathfrak{g} \rightarrow \mathfrak{g}$ is called an $s$-Rota-Baxter operator of weight $\lambda$ on $(\mathfrak{g},[~,~],\alpha)$ if $\mathcal{R}\circ \alpha= \alpha\circ \mathcal{R}$ and the following identity is satisfied
\begin{equation*} 
\quad\quad\quad\quad\quad[\mathcal{R}(x), \mathcal{R}(y)]= \mathcal{R}([\alpha^s \mathcal{R}(x), y]+ [x, \alpha^s \mathcal{R}(y)]+\lambda [x,y]),\quad\mbox{for all }x,y\in \mathfrak{g}.
\end{equation*}
\end{definition}

For $\alpha=\mathsf{Id}$, the Definition \ref{def:Rota-Baxter operators} coincides with the notion of Rota-Baxter operators on a Lie algebra. 

\begin{definition}\label{O-operator}
Let $(\mathfrak{g},[~,~],\alpha)$ be a hom-Lie algebra and $(V,\beta,\rho)$ be a hom-Lie algebra representation. A linear map $T:V\rightarrow \mathfrak{g}$ is called an $\mathcal{O}$-operator on $(\mathfrak{g},[~,~],\alpha)$ with respect to the representation $(V,\beta,\rho)$ if the following conditions hold
\begin{align*}
T\circ\beta&=\alpha\circ T,\\
[Tu,Tv]&=T\big(\{Tu,v\}-\{Tv,u\}\big),\quad\mbox{ for all }  u,v\in V.
\end{align*}
\end{definition}

\begin{remark}\label{5.2}
Let us recall from Example \ref{adjoint representation}, the $\alpha^s$-adjoint representation $(\mathfrak{g}, \alpha, \mathsf{ad}^s)$ of a hom-Lie algebra $(\mathfrak{g},[~,~],\alpha)$ for any integer $s\geq 0$. Then, an $s$-Rota-Baxter operator of weight $0$ on the hom-Lie algebra $(\mathfrak{g},[~,~],\alpha)$ is an $\mathcal{O}$-operator on $(\mathfrak{g},[~,~],\alpha)$ with respect to the representation $(\mathfrak{g}, \alpha, \mathsf{ad}^s)$. Thus, the notion of $\mathcal{O}$-operators is a generalization of Rota-Baxter operators and therefore also known as relative or generalized Rota-Baxter operators.
\end{remark}

\begin{example}
If $\alpha=\mathsf{Id}_{\mathfrak{g}}$ and $\beta=\mathsf{Id}_V$, then the Definition \ref{O-operator} coincides with the notion of $\mathcal{O}$-operators on a Lie algebra. 
\end{example}

\begin{example}
Let $T:V\rightarrow \mathfrak{g}$ be an $\mathcal{O}$-operator on a Lie algebra $(\mathfrak{g},[~,~])$ with respect to a Lie algebra representation $\rho$ on $V$. A pair $(\phi_{\mathfrak{g}},\phi_V)$ is an endomorphism of the $\mathcal{O}$-operator $T$ if 
\begin{align*}
T\circ \phi_{V}&=\phi_{\mathfrak{g}}\circ T\quad \mbox{and}\\
\rho(\phi_{\mathfrak{g}}(x))(\phi_{V}(v))&=\phi_V(\rho(x)(v)), \quad \mbox{for all } x\in \mathfrak{g}, ~v\in V. 
\end{align*}

 Let us consider the hom-Lie algebra $(\mathfrak{g},[~,~]_{\phi_\mathfrak{g}},\phi_{\mathfrak{g}})$ obtained by composition, where the hom-Lie bracket is given by 
  $$[~,~]_{\phi_\mathfrak{g}}:=\phi_{\mathfrak{g}}\circ [~,~].$$ 
If we consider the composition $\rho_{\phi_V}:=\phi_V\circ \rho$, then the triplet $(V,\phi_V,\rho_{\phi_V})$ is a hom-Lie algebra representation of $(\mathfrak{g},[~,~]_{\phi_\mathfrak{g}},\phi_{\mathfrak{g}})$. Moreover, 
$$[T(v),T(w)]_{\phi_\mathfrak{g}}=\phi_{\mathfrak{g}}[T(v),T(w)]=\phi_{\mathfrak{g}}\big(T(\rho(T(v))(w)-\rho(T(w))(v)\big),$$
and 
$$T\big(\rho_{\phi_V}(T(v))(w)-\rho_{\phi_V}(T(w))(v)\big)=T\big(\phi_V(\rho(T(v))(w)-\rho(T(w))(v)\big), $$
for all $v,w \in V$. Clearly, it follows that the map $T:V\rightarrow \mathfrak{g}$ is an $\mathcal{O}$-operator on hom-Lie algebra $(\mathfrak{g},[~,~]_{\phi_\mathfrak{g}},\phi_{\mathfrak{g}})$ with respect to the hom-Lie algebra representation $(V,\phi_V,\rho_{\phi_V})$.
\end{example}

The following proposition gives a characterization of an $\mathcal{O}$-operator $T$ in terms of a hom-Lie subalgebra structure on the graph of $T$.

\begin{proposition}
A map $T:V\rightarrow \mathfrak{g}$ is an $\mathcal{O}$-operator on $(\mathfrak{g},[~,~],\alpha)$ with respect to the representation $(V,\beta,\rho)$ if and only if the graph of the map $T$ $$\mathrm{Gr}(T)=\{(T(v),v)|~v\in V\}$$
is a  hom-Lie subalgebra of the semi-direct product hom-Lie algebra $(\mathfrak{g}\oplus V,[~,~]_{\rho},\alpha+\beta)$, defined in Lemma \ref{direct sum}.
\end{proposition}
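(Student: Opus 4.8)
The plan is to unwind the definition of a hom-Lie subalgebra and match the two resulting closure conditions to the two defining identities of an $\mathcal{O}$-operator. Recall that a hom-Lie subalgebra of $(\mathfrak{g}\oplus V,[~,~]_{\rho},\alpha+\beta)$ is a linear subspace that is both closed under the bracket $[~,~]_{\rho}$ and invariant under the structure map $\alpha+\beta$; skew-symmetry and the hom-Jacobi identity for the restricted operations are then inherited automatically from the ambient semi-direct product. Since $T$ is a linear map, $\mathrm{Gr}(T)=\{T(v)+v\mid v\in V\}$ is automatically a linear subspace of $\mathfrak{g}\oplus V$, so the entire content of the statement reduces to analyzing these two closure conditions. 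The key observation I would keep in mind throughout is that an element $x+w$ of $\mathfrak{g}\oplus V$ lies in $\mathrm{Gr}(T)$ if and only if its $\mathfrak{g}$-component is the image under $T$ of its $V$-component, i.e. $x=T(w)$.

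First I would compute the bracket of two arbitrary elements of the graph. For $u,v\in V$, applying the formula for $[~,~]_{\rho}$ from Lemma \ref{direct sum} and using the notation $\{x,v\}=\rho(x)(v)$, one obtains
$$[T(u)+u,\ T(v)+v]_{\rho}=[T(u),T(v)]+\{T(u),v\}-\{T(v),u\},$$
whose $\mathfrak{g}$-component is $[T(u),T(v)]$ and whose $V$-component is $\{T(u),v\}-\{T(v),u\}$. By the observation above, this element lies in $\mathrm{Gr}(T)$ if and only if
$$[T(u),T(v)]=T\big(\{T(u),v\}-\{T(v),u\}\big),$$
which is exactly the second defining identity of an $\mathcal{O}$-operator in Definition \ref{O-operator}. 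Next I would examine invariance under the structure map: since $(\alpha+\beta)(T(u)+u)=\alpha(T(u))+\beta(u)$, this image lies in $\mathrm{Gr}(T)$ if and only if $\alpha(T(u))=T(\beta(u))$ for all $u\in V$, i.e. $\alpha\circ T=T\circ\beta$, which is precisely the first defining identity of an $\mathcal{O}$-operator.

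Finally I would assemble the equivalence. Both closure conditions are universally quantified statements over $u,v\in V$, and each has been shown to be equivalent to one of the two conditions of Definition \ref{O-operator}; hence $\mathrm{Gr}(T)$ is a hom-Lie subalgebra exactly when $T$ is an $\mathcal{O}$-operator, giving the desired \emph{if and only if}. There is no genuine obstacle here: the proof is a direct component-by-component verification, and the only point requiring care is the bookkeeping of the $\mathfrak{g}$- and $V$-components together with the graph-membership criterion $x=T(w)$, which is what pins the two closure conditions onto the two operator identities.
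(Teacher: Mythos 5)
Your proof is correct and is precisely the ``straightforward calculation'' that the paper leaves to the reader: closure of $\mathrm{Gr}(T)$ under $[~,~]_{\rho}$ matches the identity $[Tu,Tv]=T(\{Tu,v\}-\{Tv,u\})$, and invariance under $\alpha+\beta$ matches $T\circ\beta=\alpha\circ T$. No gaps; the bookkeeping of the $\mathfrak{g}$- and $V$-components and the graph-membership criterion is exactly right.
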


\begin{proof}
The proposition follows by a straightforward calculation, we leave it to the reader.
\end{proof}

It is well-known that $\mathcal{O}$-operators on Lie algebras can be characterized in terms of the Nijenhuis operators. In the next result, we characterize $\mathcal{O}$-operators on hom-Lie algebras in terms of the Nijenhuis operators. Let us first recall from \cite{Sheng} that a linear map $N:\mathfrak{g}\rightarrow \mathfrak{g}$ is called a Nijenhuis operator on the hom-Lie algebra $(\mathfrak{g},[~,~],\alpha)$ if
$$[N(x),N(y)]=N\big([N(x),y]-[N(y),x]-N([x,y])\big)\quad \mbox{for all }x,y\in \mathfrak{g}.$$
Then, we have the following characterization of $\mathcal{O}$-operators on hom-Lie algebras.
\begin{proposition}
A map $T:V\rightarrow \mathfrak{g}$ is an $\mathcal{O}$-operator on $(\mathfrak{g},[~,~],\alpha)$ with respect to the representation $(V,\beta,\rho)$ if and only if the operator 
$$N_T=\begin{bmatrix}
   0 & T \\
    0  & 0
\end{bmatrix}: \mathfrak{g}\oplus V\rightarrow \mathfrak{g}\oplus V $$  
is a Nijenhuis operator on the semi-direct product hom-Lie algebra $(\mathfrak{g}\oplus V,[~,~]_{\rho},\alpha+\beta)$.
\end{proposition}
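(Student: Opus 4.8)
The plan is to verify directly that the operator $N_T=\begin{bmatrix} 0 & T \\ 0 & 0 \end{bmatrix}$ satisfies the Nijenhuis condition on the semi-direct product hom-Lie algebra $(\mathfrak{g}\oplus V,[~,~]_{\rho},\alpha+\beta)$ if and only if the two defining conditions of an $\mathcal{O}$-operator hold. The crucial structural fact is that $N_T$ is nilpotent of order two: since $N_T(x+v)=T(v)$ (landing in $\mathfrak{g}$) and $N_T$ kills the $\mathfrak{g}$-component, we have $N_T^2=0$ and $N_T([N_T(x+v)])=0$ for any argument already in the image of $N_T$. This nilpotency will collapse most terms in the Nijenhuis expression, which is what makes the computation tractable.

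First I would record the compatibility-with-structure-map condition. The Nijenhuis operator definition presupposes that $N$ commutes with the structure map, i.e. $N_T\circ(\alpha+\beta)=(\alpha+\beta)\circ N_T$. Evaluating both sides on $x+v$, the left side gives $T(\beta(v))$ and the right side gives $\alpha(T(v))$, so this commutation is exactly equivalent to the first $\mathcal{O}$-operator condition $T\circ\beta=\alpha\circ T$. Next I would expand the Nijenhuis identity
$$[N_T(x+u),N_T(y+v)]_\rho=N_T\big([N_T(x+u),y+v]_\rho-[N_T(y+v),x+u]_\rho-N_T([x+u,y+v]_\rho)\big)$$
for arbitrary $x+u,\,y+v\in\mathfrak{g}\oplus V$. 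Because $N_T$ annihilates $\mathfrak{g}$-components, only the $V$-parts $u,v$ survive after applying the outer $N_T$, so the whole identity should reduce to a statement living purely in $\mathfrak{g}$.

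The key steps in the reduction are: on the left, $[N_T(x+u),N_T(y+v)]_\rho=[T(u),T(v)]$ since both arguments lie in $\mathfrak{g}$ and the semi-direct bracket restricts to the $\mathfrak{g}$-bracket there. On the right, I would compute $[N_T(x+u),y+v]_\rho=[T(u),y+v]_\rho=[T(u),y]+\rho(T(u))(v)$ and similarly $[N_T(y+v),x+u]_\rho=[T(v),x]+\rho(T(v))(u)$, while $N_T([x+u,y+v]_\rho)$ picks out $N_T$ applied to the $V$-component $\rho(x)(v)-\rho(y)(u)$, which is $T(\rho(x)(v)-\rho(y)(u))$. Applying the outer $N_T$ to the bracketed difference kills every term whose $V$-component vanishes (such as $[T(u),y]$ and $[T(v),x]$, which lie in $\mathfrak{g}$), leaving only $N_T\big(\rho(T(u))(v)-\rho(T(v))(u)\big)=T\big(\{Tu,v\}-\{Tv,u\}\big)$; the term $N_T^2(\cdots)=0$ drops out by nilpotency. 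Equating this with the left side yields precisely $[Tu,Tv]=T(\{Tu,v\}-\{Tv,u\})$, the second $\mathcal{O}$-operator condition.

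The main obstacle, though it is more bookkeeping than genuine difficulty, is keeping careful track of which terms contribute after the outer application of $N_T$, since the Nijenhuis expression has several terms living in different summands of $\mathfrak{g}\oplus V$ and only the $V$-valued pieces feed back through $N_T$. One must also confirm that the reduced identity is equivalent as a biconditional: the forward direction uses that the Nijenhuis identity holds for all arguments (so in particular after projecting onto the relevant components), and the reverse direction requires reassembling the vanishing $\mathfrak{g}$-terms to show the full identity holds, not merely its $V$-restriction. Since every discarded term is either in the kernel of $N_T$ or killed by nilpotency, this equivalence is clean, and I would present the computation compactly rather than expanding all shuffle terms.
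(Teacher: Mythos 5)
Your proposal is correct and follows essentially the same route as the paper: a direct expansion of both sides of the Nijenhuis identity for $N_T$ on the semi-direct product, using that $N_T$ annihilates $\mathfrak{g}$-components so that only the $V$-valued pieces survive the outer application of $N_T$, reducing the identity to $[Tu,Tv]=T(\{Tu,v\}-\{Tv,u\})$. Your explicit handling of the structure-map commutation $N_T\circ(\alpha+\beta)=(\alpha+\beta)\circ N_T$ giving $T\circ\beta=\alpha\circ T$ is a small point the paper's proof leaves implicit, but otherwise the two arguments coincide.
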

\begin{proof} Let us consider the following expressions, where we use definition of the map $N_T$ and the bracket $[~,~]_{\rho}$. 
\begin{equation}\label{Char2: eq1}
[N_T(x+v),N_T(y+w)]_{\rho}= [T(v)+0,T(w)+0]_{\rho}=[T(v),T(w)], 
\end{equation}
and
\begin{align}\label{Char2: eq2}
\nonumber
&N_T\big([N_T(x+v),y+w]_{\rho}-[N_T(y+w),x+v]_{\rho}-N_T([x+v,y+w]_{\rho})\big)\\\nonumber
=&N_T \big(([T(v),y]+\{T(v),w\})-([T(w),x]+\{T(w),v\})-(0+T(\{x,w\}-\{y,v\}))\big)\\
=& T(\{T(v),w\}-\{T(w),v\}),
\end{align}
for all $x,y\in\mathfrak{g},$ $v,w\in V$. By the above equations \eqref{Char2: eq1}-\eqref{Char2: eq2}, it is clear that the condition  
$$[N_T(x+v),N_T(y+w)]_{\rho}=N_T\big([N_T(x+v),y+w]_{\rho}-[N_T(y+w),x+v]_{\rho}-N_T([x+v,y+w]_{\rho})\big)$$
is equivalent to the condition
$$[T(v),T(w)]=T(\{T(v),w\}-\{T(w),v\}),$$
for all $x,y\in\mathfrak{g},$ $v,w\in V$. 
\end{proof}
\begin{definition}\label{morphism of O-operators}
Let $T:V\rightarrow \mathfrak{g}$ and $T^{\prime}:V\rightarrow \mathfrak{g}$ be two $\mathcal{O}$-operators on the hom-Lie algebra $(\mathfrak{g},[~,~],\alpha)$ with respect to the representation $(V,\beta,\rho)$. A homomorphism from $T$ to $T^{\prime}$ is given by a pair $(\phi_\mathfrak{g},\phi_V)$, consisting of a hom-Lie algebra homomorphism $\phi_{\mathfrak{g}}:\mathfrak{g}\rightarrow \mathfrak{g}$ and a linear map $\phi_V: V\rightarrow V$ such that following conditions are satisfied
\begin{equation}\label{morphism:cond1}
T^{\prime}\circ \phi_V =\phi_{\mathfrak{g}}\circ T,\quad\quad\quad
\end{equation}
\begin{equation}\label{morphism:cond2}
\phi_V\circ \beta  =\beta\circ \phi_V,\quad\quad\quad
\end{equation}
\begin{equation}\label{morphism:cond3}
\quad\quad \quad\quad\quad\quad~~~~~~~\phi_V(\rho(x)(v)) =\rho(\phi_g(x))(\phi_V(v)), \quad\mbox{for all  }v\in V.
\end{equation}
\end{definition}

\subsection{A differential graded Lie algebra}

Let $(\mathfrak{g},[~,~],\alpha)$ be a hom-Lie algebra with a representation $(V,\beta,\rho)$. Then, we have a graded Lie algebra 
$$\textstyle{\big(\mathcal{G}^*:=C^{*-1}_{\alpha}(\mathfrak{g}\oplus V,\mathfrak{g}\oplus V),~[~,~]_N^{\alpha+\beta}\big)}$$ 
associated to the pair $(\mathfrak{g}\oplus V,\alpha+\beta)$.
 Then, by Proposition \ref{Maurer-Cartan}, the hom-Lie bracket and the representation correspond to an element $\mu+\rho\in \mathcal{G}^1$ satisfying $[\mu+\rho,\mu+\rho]_N^{\alpha+\beta}=0$. Let us define a map $d_{\mu+\rho}:\mathcal{G}^*\rightarrow \mathcal{G}^{*+1}$ by
$$d_{\mu+\rho}:=[\mu+\rho,-]_N^{\alpha+\beta}.$$
By the graded Jacobi identity for the bracket $[~,~]_N^{\alpha+\beta}$, it is clear that $(\mathcal{G}^*,[~,~]_N^{\alpha+\beta},d_{\mu+\rho})$ is a differential graded Lie algebra.

Now, we define a graded vector space 
\begin{equation*}
C^*_{\beta,\alpha}(V,\mathfrak{g})=\bigoplus_{n\geq 1}C^n_{\beta,\alpha}(V,\mathfrak{g}),
\end{equation*}
 where for each $n\geq 1$, the vector space $C^n_{\beta,\alpha}(V,\mathfrak{g})$ is a subspace of $\mathrm{Hom}(\wedge^n V,\mathfrak{g})$, which consists of all the linear maps $P:\wedge^n V\rightarrow \mathfrak{g}$ satisfying 
$$\alpha(P(v_1,v_2,\ldots,v_{n}))=P(\beta(v_1),\beta(v_2),\ldots,\beta(v_{n})), \quad\mbox{for all }~~ v_1,v_2,\cdots, v_{n}\in V.$$

Let us define a graded Lie bracket 
$$\{\!\!\{ -,- \}\!\!\}:C^n_{\beta,\alpha}(V,\mathfrak{g})\otimes C^m_{\beta,\alpha}(V,\mathfrak{g}) \rightarrow C^{n+m}_{\beta,\alpha}(V,\mathfrak{g})$$
as follows
\begin{equation}\label{definition of bracket}
\{\!\!\{P,Q\}\!\!\}:=(-1)^n[[~\mu+\rho~,~ P]_N^{\alpha+\beta},~Q]_N^{\alpha+\beta}
\end{equation}

By definition of the graded Lie bracket $[~,~]_N^{\alpha+\beta},$ the bracket $\{\!\!\{-,-\}\!\!\}$ can be written as follows:
\begin{align}\label{derived bracket}
\nonumber
&\{\!\!\{P,Q\}\!\!\}(v_1,v_2,\ldots,v_{n+m})\\\nonumber
&=\sum_{\tau\in S_{m,1,n-1}}(-1)^{|\tau|}~ P\Big(\{Q(v_{\tau(1)},\ldots,v_{\tau(m)}),\beta^{m-1}(u_{\tau(m+1)})\},\beta^{m}(v_{\tau(n+2)}),\ldots,\beta^{m}(v_{\tau(n+m)})\Big)\\\nonumber
&+(-1)^{mn}\Bigg(\sum_{\tau\in S_{n,m}}(-1)^{|\tau|}~\big[\alpha^{m-1}P(v_{\tau(1)},\ldots,v_{\tau(n)}),\alpha^{n-1}Q(v_{\tau(n+1)},\ldots,v_{\tau(n+m)})\big]\\
&-\sum_{\tau\in S_{n,1,m-1}}(-1)^{|\tau|}~ Q\Big(\{P(v_{\tau(1)},\ldots,v_{\tau(n)}),\beta^{n-1}(u_{\tau(n+1)})\},\beta^{n}(v_{\tau(n+2)}),\ldots,\beta^{n}(v_{\tau(n+m)})\Big)\Bigg)
\end{align}

Here, $S_{r_1,r_2,\ldots,r_i}$ denotes a $(r_1,r_2,\ldots,r_i)$-shuffle in the permutation group $S_{r_1+r_2+\cdots+r_i}$. The above graded Lie bracket is obtained via the derived bracket construction, introduced by Voronov in \cite{Voronov}. Moreover, for any $T\in C^1_{\beta,\alpha}(V,\mathfrak{g})$, i.e. $T:V\rightarrow \mathfrak{g}$ is a linear map satisfying $T\circ \beta=\alpha\circ T$, we have $$\{\!\!\{T,T\}\!\!\}(v_1,v_2)=2\Big(T\{Tv_1,v_2\}-T\{Tv_1,v_2\}-[Tv_1,Tv_2]\Big),\quad \mbox{for all~~} v_1, v_2\in V.$$ 
In turn, it follows that $\{\!\!\{T,T\}\!\!\}=0$ if and only if $T:V\rightarrow \mathfrak{g}$ is an $\mathcal{O}$-operator on hom-Lie algebra $(\mathfrak{g},[~,~],\alpha)$ with respect to the representation $(V,\beta,\rho)$. 
Thus, we have the following theorem generalizing the Lie algebra case \cite{Sheng3}.

\begin{theorem}\label{Maurer Cartan element}
The graded vector space $C^*_{\beta,\alpha}(V,\mathfrak{g})$ forms a graded Lie algebra with the graded Lie bracket $\{\!\!\{-,-\}\!\!\}$. A linear map $T:V\rightarrow \mathfrak{g}$ satisfying $T\circ \beta=\alpha\circ T$ is an $\mathcal{O}$-operator on hom-Lie algebra $(\mathfrak{g},[~,~],\alpha)$ with respect to the representation $(V,\beta,\rho)$ if and only if $T\in C^1_{\beta,\alpha}(V,\mathfrak{g})$ is a Maurer-Cartan element of the graded Lie algebra $(C^*_{\beta,\alpha}(V,\mathfrak{g}),\{\!\!\{-,-\}\!\!\})$.
\end{theorem}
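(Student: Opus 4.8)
The plan is to recognize the bracket $\{\!\!\{-,-\}\!\!\}$ as a Voronov derived bracket \cite{Voronov} and to read off both assertions from the general formalism. To this end I would package the following V-data inside the graded Lie algebra $L:=(\mathcal{G}^*,[~,~]_N^{\alpha+\beta})$: the element $\Delta:=\mu+\rho\in\mathcal{G}^1$, the graded subspace
$$\mathfrak{a}:=\bigoplus_{n\geq 1}C^n_{\beta,\alpha}(V,\mathfrak{g})\hookrightarrow L,$$
where a map $\wedge^n V\to\mathfrak{g}$ is viewed as an element of $\mathrm{Hom}(\wedge^n(\mathfrak{g}\oplus V),\mathfrak{g}\oplus V)$ that vanishes whenever one of its arguments lies in $\mathfrak{g}$ and whose output lies in $\mathfrak{g}$, together with the projection $\mathbb{P}\colon L\to\mathfrak{a}$ onto this component and its complement $\mathfrak{b}:=\ker\mathbb{P}$. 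By Proposition \ref{Maurer-Cartan}, $\Delta$ satisfies $[\Delta,\Delta]_N^{\alpha+\beta}=0$, and clearly $\Delta\in\mathfrak{b}$, since each of its components $\mu$ and $\rho$ either takes an argument in $\mathfrak{g}$ or lands in $V$.

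The core of the argument is a bookkeeping of the circle product \eqref{Defofcirc} according to the ``type'' of a multilinear map, namely the number of its arguments drawn from $\mathfrak{g}$ (resp. $V$) and the location of its output. First I would check that $\mathfrak{a}$ is an \emph{abelian} subalgebra: for $P,Q\in\mathfrak{a}$ the product $P\circ_{\alpha+\beta}Q$ inserts the output of $Q$, which lies in $\mathfrak{g}$, into the first argument slot of $P$, but $P$ vanishes on any $\mathfrak{g}$-argument; hence $P\circ_{\alpha+\beta}Q=Q\circ_{\alpha+\beta}P=0$ and $[P,Q]_N^{\alpha+\beta}=0$. Next I would verify that $\mathfrak{b}=\ker\mathbb{P}$ is a subalgebra by the same count: for $A,B\in\mathfrak{b}$ a component of $[A,B]_N^{\alpha+\beta}$ of type $(0\text{ arguments from }\mathfrak{g}\mid\text{output in }\mathfrak{g})$ can arise only if one of $A,B$ already lies in $\mathfrak{a}$, which is excluded. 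These two facts exhibit $(L,\mathfrak{a},\mathbb{P},\Delta)$ as V-data in the sense of \cite{Voronov}.

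With the V-data in place, Voronov's theorem endows $\mathfrak{a}$ with an $L_\infty$-structure whose $k$-ary bracket is $l_k(a_1,\dots,a_k)=\mathbb{P}[\cdots[[\Delta,a_1],a_2],\dots,a_k]$, and I would then show that only the binary bracket survives. The same type count gives $[\Delta,P]\in\mathfrak{b}$ for $P\in\mathfrak{a}$ (its components being of types $(1\mid n\to\mathfrak{g})$ and $(0\mid n+1\to V)$), so $l_1=\mathbb{P}\circ[\Delta,-]=0$; moreover a short computation shows that $[[\Delta,P],Q]$ already lies entirely in $\mathfrak{a}$, so that for $k\geq 3$ the next bracket is taken against an element of the abelian subalgebra $\mathfrak{a}$ and vanishes, i.e. $l_k=0$. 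Thus $l_2=\{\!\!\{-,-\}\!\!\}$, which coincides with \eqref{definition of bracket}, makes $C^*_{\beta,\alpha}(V,\mathfrak{g})$ a genuine graded Lie algebra, proving the first assertion; expanding $l_2$ through \eqref{Defofcirc} recovers the explicit formula \eqref{derived bracket}.

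For the second assertion, a Maurer--Cartan element is an element $T$ of degree $1$, i.e. $T\in C^1_{\beta,\alpha}(V,\mathfrak{g})$ (equivalently, a linear map $T\colon V\to\mathfrak{g}$ with $T\circ\beta=\alpha\circ T$), satisfying $\{\!\!\{T,T\}\!\!\}=0$. By the formula for $\{\!\!\{T,T\}\!\!\}(v_1,v_2)$ recorded just before the statement, this equation is exactly $[Tv_1,Tv_2]=T(\{Tv_1,v_2\}-\{Tv_2,v_1\})$, i.e. the defining identity of an $\mathcal{O}$-operator. I expect the main obstacle to be precisely the type-bookkeeping underlying the V-data verification and the vanishing $l_1=l_{\geq 3}=0$: one must track how the powers of $\alpha$ and $\beta$ in \eqref{Defofcirc} distribute and confirm that the cross-type components cancel, after which both conclusions are formal consequences of the derived-bracket formalism.
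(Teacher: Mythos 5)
Your proposal is correct and follows essentially the same route as the paper: the paper also obtains $\{\!\!\{-,-\}\!\!\}$ as the Voronov derived bracket associated to the Maurer--Cartan element $\mu+\rho$ of $(\mathcal{G}^*,[~,~]_N^{\alpha+\beta})$ (Proposition \ref{Maurer-Cartan}) and then reads off the $\mathcal{O}$-operator identity from the explicit formula for $\{\!\!\{T,T\}\!\!\}(v_1,v_2)$. The only difference is that you spell out the V-data verification ($\mathfrak{a}$ abelian, $\ker\mathbb{P}$ a subalgebra, $\Delta\in\ker\mathbb{P}$, and $l_1=l_{\geq 3}=0$) that the paper leaves implicit, and your type-counting there is accurate.
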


\begin{remark}\label{dgla}
Let $T:V\rightarrow \mathfrak{g}$ be an $\mathcal{O}$-operator on hom-Lie algebra $(\mathfrak{g},[~,~],\alpha)$ with respect to the representation $(V,\beta,\rho)$. From Theorem \ref{Maurer Cartan element}, $T\in C^1_{\beta,\alpha}(V,\mathfrak{g})$ is a Maurer-Cartan element of the graded Lie algebra $(C^*_{\beta,\alpha}(V,\mathfrak{g}),\{\!\!\{-,-\}\!\!\})$. Then, the $\mathcal{O}$-operator $T$ induces a differential $\delta_T:=\{\!\!\{T,-\}\!\!\}$ on the graded Lie algebra $(C^*_{\beta,\alpha}(V,\mathfrak{g}),\{\!\!\{-,-\}\!\!\})$, which makes it a differential graded Lie algebra.  
\end{remark}

By remark \ref{dgla}, we associate a cochain complex $(C^*_{\beta,\alpha}(V,\mathfrak{g}),\delta_T)$ to an $\mathcal{O}$-operator $T$ on a hom-Lie algebra $(\mathfrak{g},[~,~],\alpha)$ with respect to the representation $(V,\beta,\rho)$. The cohomology of this cochain complex is called the cohomology of the $\mathcal{O}$-operator $T$.

\subsection{Cohomology of $\mathcal{O}$-operators in terms of hom-Lie algebra cohomology}
Now, we describe the cohomology of an $\mathcal{O}$-operator on hom-Lie algebras in terms of hom-Lie algebra cohomology of certain hom-Lie algebra with coefficients in a representation. 

%For this purpose, we first show that any $\mathcal{O}$-operator on a hom-Lie algebra $(\mathfrak{g},[~,~],\alpha)$ with respect to representation $(V,\beta,\rho)$ induces a hom-pre-Lie algebra structure $(V,\cdot_T,\beta)$. We denote by $V^c_\beta$, the sub-adjacent hom-Lie algebra of the induced hom-pre-Lie algebra. We prove that the operator $T$ also induces a representation $\rho_T$ of hom-Lie algebra $V^c_{\beta}$ on $\mathfrak{g}$ with respect to the map $\alpha$. Finally, we show that the complex $(C^*_{\beta,\alpha}(V,\mathfrak{g}),\delta_T)$ coincides with the cochain complex of hom-Lie algebra with coefficients in the representation $(\mathfrak{g},\alpha,\rho_T)$.   

Let us recall that a hom-pre-Lie algebra is a triplet $(V,\cdot,\beta)$, where $V$ is a vector space equipped with a bilinear map $\cdot: V\otimes V\rightarrow V$ and a linear map $\beta: V\rightarrow V$ such that 
\begin{align*}
\beta(u\cdot v)&=\beta(u)\cdot \beta(v), \quad \mbox{and}\\
(u\cdot v)\cdot  \beta(w) - \beta(u)\cdot  (v\cdot  w) &= (v\cdot u)\cdot \beta(w) - \beta(v)\cdot (u\cdot w),\quad\mbox{for all } u,v,w\in V.
\end{align*}

An $\mathcal{O}$-operator on a hom-Lie algebra induces a hom-pre-Lie algebra. In particular, we have the following straightforward proposition.

\begin{proposition}\label{induced hom-pre-Lie algebra}
Let $T:V\rightarrow \mathfrak{g}$ be an $\mathcal{O}$-operator on the hom-Lie algebra $(\mathfrak{g},[~,~],\alpha)$ with respect to the representation $(V,\beta, \rho)$. Then, the $\mathcal{O}$-operator induces a hom-pre-Lie algebra $(V,\cdot_T,\beta )$, where $\cdot_T$ is given by
$$v\cdot_T w= \{Tv,w\}, \quad \mbox{for all  }v, w\in V. $$
\end{proposition}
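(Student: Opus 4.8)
The plan is to verify the two defining axioms of a hom-pre-Lie algebra directly from the structure $v\cdot_T w = \{Tv,w\} = \rho(Tv)(w)$, using exactly three inputs: the $\mathcal{O}$-operator identity $[Tu,Tv]=T(\{Tu,v\}-\{Tv,u\})$, the intertwining condition $T\circ\beta=\alpha\circ T$, and the two representation axioms from Definition~\ref{Rep-hom-Lie}.

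Let me sketch the proof approach for the hom-pre-Lie axioms.

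\begin{proof}[Proof proposal]
First I would check the multiplicativity condition $\beta(v\cdot_T w)=\beta(v)\cdot_T \beta(w)$. Unwinding definitions, the right-hand side is $\{T\beta(v),\beta(w)\}=\rho(T\beta(v))(\beta(w))$. Using $T\circ\beta=\alpha\circ T$ this equals $\rho(\alpha(Tv))(\beta(w))$, which by the first representation axiom $\rho(\alpha(x))(\beta(w))=\beta(\rho(x)(w))$ equals $\beta(\rho(Tv)(w))=\beta(\{Tv,w\})=\beta(v\cdot_T w)$. This step is routine.

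The main work is the hom-pre-Lie associativity identity. I would compute the left associator $(v\cdot_T w)\cdot_T \beta(u)-\beta(v)\cdot_T(w\cdot_T u)$ and show it is symmetric in $v,w$. Expanding, $(v\cdot_T w)\cdot_T \beta(u)=\rho(T(v\cdot_T w))(\beta(u))=\rho(T\{Tv,w\})(\beta(u))$, while $\beta(v)\cdot_T(w\cdot_T u)=\rho(T\beta(v))(\rho(Tw)(u))=\rho(\alpha(Tv))(\rho(Tw)(u))$, again via $T\beta=\alpha T$. The key algebraic move is to recognize $T\{Tv,w\}$: from the $\mathcal{O}$-operator identity, $T(\{Tv,w\}-\{Tw,v\})=[Tv,Tw]$, so $\rho(T\{Tv,w\})=\rho([Tv,Tw])+\rho(T\{Tw,v\})$. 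Substituting this and applying the second representation axiom $\rho([x,y])(\beta(u))=\rho(\alpha(x))\rho(y)(u)-\rho(\alpha(y))\rho(x)(u)$ to the $\rho([Tv,Tw])(\beta(u))$ term, the expression for the associator should collapse into terms that are manifestly symmetric in $v$ and $w$. I expect the $\rho([Tv,Tw])(\beta(u))$ contribution to cancel against the difference of the two $\rho(\alpha(T\cdot))$ terms, leaving behind precisely the expression one obtains by swapping $v\leftrightarrow w$.

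The hard part is the bookkeeping in the second identity: one must carefully track that after invoking the $\mathcal{O}$-operator relation to replace $T\{Tv,w\}$, the residual term $\rho(T\{Tw,v\})(\beta(u))$ combines correctly with the $\beta(v)\cdot_T(w\cdot_T u)$ term, so that the full left associator is visibly invariant under $v\leftrightarrow w$. This requires applying $T\beta=\alpha T$ in the right places and using the second representation axiom to rewrite $\rho(\alpha(Tv))\rho(Tw)(u)$. Once the $\mathcal{O}$-operator identity and both representation axioms are deployed, the symmetry in $v,w$ of the associator---which is exactly the hom-pre-Lie condition---should follow, completing the proof.
\end{proof}
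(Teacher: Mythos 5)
Your verification is correct and is precisely the direct computation the paper has in mind: the paper states this proposition without proof, calling it straightforward, and your argument supplies exactly the omitted check. In particular, your key step is right: the difference of the two associators reduces to $\rho([Tv,Tw])(\beta(u)) - \big(\rho(\alpha(Tv))\rho(Tw)(u) - \rho(\alpha(Tw))\rho(Tv)(u)\big)$, which vanishes by the $\mathcal{O}$-operator identity combined with the second representation axiom, while the multiplicativity of $\beta$ follows from $T\circ\beta=\alpha\circ T$ and the first representation axiom.
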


If $(V,\cdot,\beta)$ is a hom-pre-Lie algebra, then the commutator bracket $[v,w]^c=v\cdot w-w\cdot v$ gives a hom-Lie algebra structure $V^c_{\beta}:=(V,[~,~]^c,\beta)$. It is called the sub-adjacent hom-Lie algebra of the hom-pre-Lie algebra $(V,\cdot,\beta)$.

\begin{proposition}\label{rep associated to O-operator}
Let $T:V\rightarrow \mathfrak{g}$ be an $\mathcal{O}$-operator on the hom-Lie algebra $(\mathfrak{g},[~,~],\alpha)$ with respect to the representation $(V,\beta, \rho)$. Let us define a map $\rho_T:V\rightarrow \mathsf{End}(\mathfrak{g})$ given by
$$\rho_T(v)(x):=[Tv,x]+T\{x,v\},\quad \mbox{for all  }v\in V~~\mbox{and  } x\in\mathfrak{g}.$$
Then, the triplet $(\mathfrak{g},\alpha,\rho_T)$ is a representation of the sub-adjacent hom-Lie algebra $V^c_{\beta}$.
\end{proposition}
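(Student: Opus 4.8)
The plan is to verify directly the two conditions of Definition \ref{Rep-hom-Lie} for the action of the sub-adjacent hom-Lie algebra $V^c_\beta=(V,[~,~]^c,\beta)$ on $\mathfrak{g}$ with respect to $\alpha$ through $\rho_T$. Since each $\rho_T(v)$ is clearly a linear endomorphism of $\mathfrak{g}$ and $\rho_T$ is linear in $v$, it remains to establish
\begin{align*}
\rho_T(\beta(v))(\alpha(x)) &= \alpha\big(\rho_T(v)(x)\big),\\
\rho_T([v,w]^c)(\alpha(x)) &= \rho_T(\beta(v))\rho_T(w)(x)-\rho_T(\beta(w))\rho_T(v)(x),
\end{align*}
for all $v,w\in V$ and $x\in\mathfrak{g}$. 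The ingredients I will use are the two $\mathcal{O}$-operator relations $T\circ\beta=\alpha\circ T$ and $[Tu,Tv]=T(\{Tu,v\}-\{Tv,u\})$, the multiplicativity and hom-Jacobi identity of $(\mathfrak{g},[~,~],\alpha)$, and the two representation identities for $(V,\beta,\rho)$, namely $\{\alpha(x),\beta(v)\}=\beta\{x,v\}$ and $\{[x,y],\beta(v)\}=\{\alpha(x),\{y,v\}\}-\{\alpha(y),\{x,v\}\}$.

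First I would dispatch the structure-map compatibility. Expanding $\rho_T(\beta(v))(\alpha(x))=[T\beta(v),\alpha(x)]+T\{\alpha(x),\beta(v)\}$, I would use $T\beta(v)=\alpha T(v)$ and multiplicativity of $\alpha$ on the first summand, and the first representation identity followed once more by $T\beta=\alpha T$ on the second summand; both summands then carry a leading $\alpha$, giving $\alpha\big([Tv,x]+T\{x,v\}\big)=\alpha(\rho_T(v)(x))$. This step is routine.

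The real content is the second identity. I would first apply the $\mathcal{O}$-operator relation in the form $T([v,w]^c)=T(\{Tv,w\}-\{Tw,v\})=[Tv,Tw]$, so that the left-hand side becomes $[[Tv,Tw],\alpha(x)]+T\{\alpha(x),\{Tv,w\}\}-T\{\alpha(x),\{Tw,v\}\}$. Expanding each $\rho_T$ on the right-hand side (and using $T\beta=\alpha T$) produces a group of pure brackets $[\alpha Tv,[Tw,x]]-[\alpha Tw,[Tv,x]]$ together with mixed terms. The pure-bracket group equals $[[Tv,Tw],\alpha(x)]$ by the hom-Jacobi identity applied to $Tv,Tw,x$ (using skew-symmetry to rearrange), so it matches the corresponding left-hand term exactly.

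It then remains to match the mixed terms. The plan is to rewrite each bracket of two elements of the image of $T$, such as $[\alpha Tv,T\{x,w\}]=[T\beta(v),T\{x,w\}]$, via the $\mathcal{O}$-operator relation; the resulting term $-T\{T\{x,w\},\beta(v)\}$ cancels an already-present term $+T\{T\{x,w\},\beta(v)\}$, and similarly after interchanging $v$ and $w$. Finally, applying the second representation identity to $\{[Tw,x],\beta(v)\}$ and $\{[Tv,x],\beta(w)\}$ turns the survivors into terms $T\{\alpha Tv,\{x,w\}\}$, $T\{\alpha Tw,\{x,v\}\}$ and $T\{\alpha(x),\cdot\}$; the first two pairs cancel, leaving precisely $T\{\alpha(x),\{Tv,w\}\}-T\{\alpha(x),\{Tw,v\}\}$, which is exactly the mixed part of the left-hand side. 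I expect the main obstacle to be organizational rather than conceptual: keeping the signs correct and tracking which arguments lie inside $T$ while invoking the $\mathcal{O}$-operator relation and the second representation identity in the right order so that all cancellations align.
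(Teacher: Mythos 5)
Your proposal is correct and follows essentially the same route as the paper's proof: both verify the two representation axioms directly, use $T([v,w]^c)=[Tv,Tw]$ on the left-hand side, handle the pure bracket terms via the hom-Jacobi identity, convert $[T\beta(v),T\{x,w\}]$ back through the $\mathcal{O}$-operator relation to produce the cancelling $T\{T\{x,w\},\beta(v)\}$ terms, and finish with the representation identity $\{[x,Tv],\beta(w)\}=\{\alpha(x),\{Tv,w\}\}-\{\alpha(Tv),\{x,w\}\}$. All the cancellations you anticipate do align as claimed, so the plan goes through without modification.
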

\begin{proof}
First, let us show that $\rho_T(\beta(v))(\alpha(x))= \alpha(\rho_T(v)(x))$. The required identity holds by using the facts that 
$$T\circ \beta=\alpha\circ T~~~\mbox{ and }~~~\{\alpha(x),\beta(v)\}=\beta\{x,v\},\quad\mbox{for all } x\in\mathfrak{g},~v\in V.$$ 
In fact,
\begin{align*}
\rho_T(\beta(v))(\alpha(x))&=[T(\beta (v)),\alpha (x)]+T\{\alpha(x),\beta(v)\}\\
&=\alpha([Tv,x]+T\{x,v\})\\
&=\alpha(\rho_T(v)(x)).
\end{align*}
Next, we use the properties of an $\mathcal{O}$-operator  to obtain the following expressions:
\begin{align}\label{rep:eq1}
\rho_T([v,w]^c)(\alpha(x))&=[T\big(\{Tv,w\}-\{Tw,v\}\big),\alpha(x)]+T\{\alpha(x),\{Tv,w\}-\{Tw,v\}\}\\\nonumber
&=[[Tv,Tw],\alpha(x)]+T\{\alpha(x),\{Tv,w\}-\{Tw,v\}\}
\end{align}
\begin{align}\label{rep:eq2}
\rho_T(\beta(v))\rho_T(w)(x)&=\rho_T(\beta(v))([Tw,x]+T\{x,w\})\\\nonumber
&=[T(\beta(v)),[Tw,x]+T\{x,w\}]+T\{[Tw,x]+T\{x,w\},\beta(v)\}\\\nonumber
&=[T(\beta(v)),[Tw,x]] + T\big(\{T(\beta(v)),\{x,w\}\}\big)-T\big(\{T(\{x,w\}),\beta(v)\}\big)\\\nonumber
&\quad+T\{[Tw,x]+T\{x,w\},\beta(v)\}\\\nonumber
&=[\alpha(T(v)),[Tw,x]]+T\big(\{\alpha(T(v)),\{x,w\}\}\big)+T\{[Tw,x],\beta(v)\}
\end{align}
Similarly,
\begin{equation}\label{rep:eq3}
\rho_T(\beta(w))\rho_T(v)(x)=[\alpha(T(w)),[Tv,x]]+T\big(\{\alpha(T(w)),\{x,v\}\}\big) +T\{[Tv,x],\beta(w)\}
\end{equation}

Since the map $\rho:\mathfrak{g}\rightarrow \mathsf{End}(V)$ (denoted by $\{x,v\}:=\rho(x)(v)$) is a representation of the hom-Lie algebra $(\mathfrak{g},[~,~],\alpha)$, it follows that 
\begin{align}
\{\alpha(x),\{Tv,w\}\}-\{\alpha(T(v)),\{x,w\}\}&=\{[x,Tv],\beta(w)\},\\\label{rep:eq5}
\{\alpha(x),\{Tw,v\}\}-\{\alpha(T(w)),\{x,v\}\}&=\{[x,Tw],\beta(v)\}.
\end{align}
Hence, by using equations \eqref{rep:eq1}-\eqref{rep:eq5}, we get
$$\rho_T([v,w]^c)(\alpha(x))= \rho_T(\beta(v))\rho_T(w)(x)-\rho_T(\beta(v))\rho_T(w)(x), ~~\mbox{for all } v,w \in V~~\mbox{and } x\in \mathfrak{g}.$$
Thus, the triplet $(\mathfrak{g},\alpha,\rho_T)$ is a representation of sub-adjacent hom-Lie algebra $V^c_{\beta}$.
\end{proof}

With the above notations, let us consider the cochain complex $(C^*_{\beta,\alpha}(V,\mathfrak{g}),\delta_{\beta,\alpha})$ of the sub-adjacent hom-Lie algebra $V^c_{\beta}$ with coefficients in the representation $(\mathfrak{g},\alpha,\rho_T)$. Recall that the differential $\delta_{\beta,\alpha}$ is given by 
\begin{align}\label{coboundary:subadjacent hom-Lie algebra}
&\delta_{\beta,\alpha} P(v_1,v_2,\ldots,v_{n+1})\\\nonumber
= &\sum_{i=1}^{n+1}(-1)^{i+1}\rho_T(\beta^{n-1}(v_i))\big( P(v_1,v_2,\ldots,\hat{v_i},\ldots,v_{n+1})\big)\\\nonumber
&+\sum_{ i<j }(-1)^{i+j}P([v_i,v_j]^c,\beta(v_1),\ldots,\hat{\beta(v_i)},\ldots,\hat{\beta(v_j)},\ldots,\beta(v_{n+1}))
\end{align}

Thus, there are two different differentials $\delta_\beta$ and $\delta_T:=\{\!\!\{T,- \}\!\!\}$ on the graded vector spaces $C^*_{\beta,\alpha}(V,\mathfrak{g})$. The following proposition shows that both of these differentials yield the same cohomology.

\begin{proposition}
Let $T:V\rightarrow \mathfrak{g}$ be an $\mathcal{O}$-operator on the hom-Lie algebra $(\mathfrak{g},[~,~],\alpha)$ with respect to the representation $(V,\beta, \rho)$. Then, the differentials $\delta_{\beta}$ and $\{\!\!\{T,- \}\!\!\}$ on $C^*_{\beta,\alpha}(V,\mathfrak{g})$ are related by 
$$\delta_T(P)=(-1)^n\{\!\!\{T,P \}\!\!\},\quad \mbox{for any } ~~P\in C^n_{\beta,\alpha}(V,\mathfrak{g})\mbox{ and } n\geq 1.$$
\end{proposition}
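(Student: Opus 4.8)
The plan is to prove the identity in the mathematically meaningful form $\delta_{\beta,\alpha}(P)=(-1)^{n}\{\!\!\{T,P\}\!\!\}$ for every $P\in C^{n}_{\beta,\alpha}(V,\mathfrak{g})$, where $\delta_{\beta,\alpha}$ is the coboundary \eqref{coboundary:subadjacent hom-Lie algebra} of the sub-adjacent hom-Lie algebra $V^{c}_{\beta}$ with coefficients in $(\mathfrak{g},\alpha,\rho_{T})$. Since in the statement $\delta_{T}=\{\!\!\{T,-\}\!\!\}$, this is the actual content of the proposition, and because the sign $(-1)^{n}$ is constant on each $C^{n}_{\beta,\alpha}(V,\mathfrak{g})$, the operators $\delta_{\beta,\alpha}$ and $\{\!\!\{T,-\}\!\!\}$ have identical kernels and images in every degree, which is precisely the assertion that they define the same cohomology. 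The whole argument is a direct comparison: I would expand $\{\!\!\{T,P\}\!\!\}$ using the explicit formula \eqref{derived bracket} with first entry $T$ (degree $1$) and second entry $P$ (degree $n$), and then match the three resulting families of terms against the two sums of $\delta_{\beta,\alpha}P$, after unfolding $\rho_{T}$ through its definition $\rho_{T}(v)(x)=[Tv,x]+T\{x,v\}$ from Proposition~\ref{rep associated to O-operator}.

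First I would record the specialization (degrees $1$ and $n$) in \eqref{derived bracket}: the three shuffle sets collapse to $S_{n,1}$, $S_{1,n}$ and $S_{1,1,n-1}$, and the powers of the structure maps reduce to $\alpha^{n-1}$, $\beta^{n-1}$ and $\beta$. Using $T\circ\beta=\alpha\circ T$ repeatedly, every factor $T\beta^{n-1}$ becomes $\alpha^{n-1}T$, which is exactly the twist appearing inside $\rho_{T}(\beta^{n-1}(v_i))$. The $S_{n,1}$-family then reads $\sum_{i}(-1)^{n+1-i}\,T\{P(v_1,\ldots,\widehat{v_i},\ldots,v_{n+1}),\beta^{n-1}(v_i)\}$, and the $S_{1,n}$-family (carrying the prefactor $(-1)^{mn}=(-1)^{n}$) reads $(-1)^{n}\sum_{i}(-1)^{i-1}\,[\alpha^{n-1}T(v_i),P(v_1,\ldots,\widehat{v_i},\ldots,v_{n+1})]$; here one reindexes each sum by the single omitted, respectively selected, index $i$ and reads off the signature. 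Since $(-1)^{n+1-i}=(-1)^{n}(-1)^{i+1}$ and $(-1)^{i-1}=(-1)^{i+1}$, the two families combine into $(-1)^{n}\sum_{i}(-1)^{i+1}\rho_{T}(\beta^{n-1}(v_i))\big(P(v_1,\ldots,\widehat{v_i},\ldots,v_{n+1})\big)$, i.e.\ exactly $(-1)^{n}$ times the first sum of $\delta_{\beta,\alpha}P$.

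For the remaining $S_{1,1,n-1}$-family I would sum over ordered pairs $(a,b)=(\tau(1),\tau(2))$, with the other arguments $\beta$-twisted and kept in increasing order; each summand carries $\{Tv_a,v_b\}=v_a\cdot_{T}v_b$ in the first slot of $P$ by Proposition~\ref{induced hom-pre-Lie algebra}. The key manoeuvre is to pair the term indexed by $(i,j)$ with the one indexed by $(j,i)$ for each $i<j$: a direct count gives signatures $(-1)^{i+j+1}$ and $(-1)^{i+j}$, so their sum contributes $-(-1)^{i+j}(v_i\cdot_{T}v_j-v_j\cdot_{T}v_i)=-(-1)^{i+j}[v_i,v_j]^{c}$ in the first slot. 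Together with the overall factor $-(-1)^{mn}=-(-1)^{n}$ in front of this family, it reorganizes into $(-1)^{n}\sum_{i<j}(-1)^{i+j}P([v_i,v_j]^{c},\beta(v_1),\ldots,\widehat{\beta(v_i)},\ldots,\widehat{\beta(v_j)},\ldots,\beta(v_{n+1}))$, i.e.\ $(-1)^{n}$ times the second sum of $\delta_{\beta,\alpha}P$. Adding the two contributions yields $\{\!\!\{T,P\}\!\!\}=(-1)^{n}\delta_{\beta,\alpha}P$, which is the claim.

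I expect the sign bookkeeping to be the only genuine obstacle. The delicate points are evaluating $(-1)^{|\tau|}$ correctly for the three specialized shuffle types and verifying that they reassemble into the Koszul signs $(-1)^{i+1}$ and $(-1)^{i+j}$ of the hom-Lie coboundary; and, above all, checking that the antisymmetric pairing $(i,j)\leftrightarrow(j,i)$ in the last family produces the commutator $[\,\cdot\,,\,\cdot\,]^{c}$ rather than a stray symmetric combination. A useful consistency check is the degree-one case, where the same computation must reproduce the formula for $\{\!\!\{T,T\}\!\!\}$ recorded just before Theorem~\ref{Maurer Cartan element}. One should also confirm throughout that the compatibility $\alpha\circ P=P\circ\beta^{\otimes n}$ defining $C^{n}_{\beta,\alpha}(V,\mathfrak{g})$ is preserved, so that $\{\!\!\{T,P\}\!\!\}$ indeed lands in $C^{n+1}_{\beta,\alpha}(V,\mathfrak{g})$, matching the target of $\delta_{\beta,\alpha}$.
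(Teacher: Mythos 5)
Your proposal is correct and follows essentially the same route as the paper's proof: expand $\{\!\!\{T,P\}\!\!\}$ via the derived bracket formula \eqref{derived bracket} specialized to degrees $(1,n)$, compute the shuffle signatures for the $S_{n,1}$, $S_{1,n}$ and $S_{1,1,n-1}$ families, and reassemble the first two into $\rho_T(\beta^{n-1}(v_i))$ and the antisymmetrized third into $[v_i,v_j]^c$; your sign bookkeeping ($(-1)^{n+1-i}$, $(-1)^{i-1}$, and the pair $(-1)^{i+j+1}$, $(-1)^{i+j}$) matches what the paper's displayed computation implicitly uses, and you are right that the statement must be read with $\delta_T$ meaning the coboundary $\delta_{\beta,\alpha}$ of \eqref{coboundary:subadjacent hom-Lie algebra} rather than the $\delta_T:=\{\!\!\{T,-\}\!\!\}$ of Remark \ref{dgla}.
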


\begin{proof} For $P \in C^n_{\beta,\alpha}(V,\mathfrak{g})$ and $n\geq 1$, we have
\begin{align*}
&\{\!\!\{T,P \}\!\!\}(v_1,v_2,\ldots,v_{n+1})\\
=&\sum_{\tau\in S_{n,1}}(-1)^{|\tau|}~ T\Big(\{P(v_{\tau(1)},\ldots,v_{\tau(n)}),\beta^{n-1}(u_{\tau(n+1)})\}\Big)\\
&+(-1)^{n}\Bigg(\sum_{\tau\in S_{1,n}}(-1)^{|\tau|}~\big[\alpha^{n-1}T(v_{\tau(1)}),P(v_{\tau(2)},\ldots,v_{\tau(n+1)})\big]\\
& -\sum_{\tau\in S_{1,1,n-1}}(-1)^{|\tau|}~ P\Big(\{T(v_{\tau(1)}),(u_{\tau(2)})\},\beta(v_{\tau(3)}),\ldots,\beta(v_{\tau(n+1)})\Big)\Bigg)
\end{align*}

\begin{align*}
=&(-1)^n\bigg(\sum_{i=1}^{n+1}(-1)^{i+1}T\{P(v_1,\ldots,\hat{v_i},\ldots,v_{n+1}),\beta^{n-1}(v_i)\}\\
&+\sum_{i=1}^{n+1}(-1)^{i+1}[\alpha^{n-1}\big(T(v_i)\big), P(v_1,\ldots,\hat{v_i},\ldots,v_{n+1})]\\\nonumber
&+\sum_{ i<j }(-1)^{i+j}P\big(\{T(v_i),v_j\}-\{T(v_j),v_i\},\beta(v_1),\ldots,\hat{\beta(v_i)},\ldots,\hat{\beta(v_j)},\ldots,\beta(v_{n+1})\big)\bigg)\\
= &\sum_{i=1}^{n+1}(-1)^{i+1}\rho_T(\beta^{n-1}(v_i))\big( P(v_1,v_2,\ldots,\hat{v_i},\ldots,v_{n+1})\big)\\
&+\sum_{ i<j }(-1)^{i+j}P([v_i,v_j]^c,\beta(v_1),\ldots,\hat{\beta(v_i)},\ldots,\hat{\beta(v_j)},\ldots,\beta(v_{n+1}))\\
=&(-1)^n\delta_T(P)(v_1,v_2,\ldots,v_{n+1}).
\end{align*}
Hence, $\delta_T(P)=(-1)^n\{\!\!\{T,P \}\!\!\},\quad \mbox{for any } ~~P\in C^n_{\beta,\alpha}(V,\mathfrak{g})\mbox{ and } n\geq 1.$
\end{proof}

%\section{Deformations of $\mathcal{O}$-operators on hom-Lie algebras} 
%In this section, we discuss one-parameter deformations of $\mathcal{O}$-operators on a hom-Lie algebra. We interpret both the linear and formal (one-parameter) deformations in terms of the cohomology associated to $\mathcal{O}$-operators. 

%\subsection{Linear deformations}
 %Let $T: V\rightarrow \mathfrak{g}$ be an $\mathcal{O}$-operator on a hom-Lie algebra $(\mathfrak{g},[~,~],\alpha)$ with respect to a representation $(V,\beta,\rho)$. A linear deformation of $T$ is given by a map 
 %$$T_t:=T+t\mathfrak{T}:V\rightarrow \mathfrak{g},\quad \mbox{for some } \mathfrak{T}\in C^1_{\beta,\alpha}(V,\mathfrak{g}),$$
 %such that $T_t $ is an $\mathcal{O}$-operator on a hom-Lie algebra $(\mathfrak{g},[~,~],\alpha)$ with respect to a representation $(V,\beta,\rho)$. 

\section{\normalfont\large\textbf{Deformation of $\mathcal{O}$-operators on regular hom-Lie algebras}}
In this section, we discuss linear and formal one-parameter deformations of $\mathcal{O}$-operators on hom-Lie algebras. In this section, we always assume hom-Lie algebras to be regular and the endomorphism in the representations to be an isomorphism. 

\subsection{Deformation complex of an $\mathcal{O}$-operator on regular hom-Lie algebras}
 We define a deformation complex of $\mathcal{O}$-operators on regular hom-Lie algebras. However, we will see that to get suitable deformation cohomology; we need to define the space of $0$-cochains. For this purpose, we consider a regular hom-Lie algebra $(\mathfrak{g},[~,~],\alpha)$ with a representation $(V,\beta,\rho)$, where $\beta:V\rightarrow V$ is a vector space isomorphism. In this case, an $\mathcal{O}$-operator $T:V\rightarrow \mathfrak{g}$ induces a regular hom-pre-Lie algebra $(V,\cdot_T,\beta)$ and hence, the sub-adjacent hom-Lie algebra $V_{\beta}^c$ is also regular.
Subsequently, from Remark \ref{regular hom-Lie algebra cohomology} we have a modified cochain complex 
$$\bigg(\widetilde{C}^*_{\beta,\alpha}(V,\mathfrak{g}):=\bigoplus_{n\geq 0}~C^n_{\beta,\alpha}(V,\mathfrak{g}),~\delta_{\beta,\alpha}\bigg),$$
where the space of $0$-cochains are given by 
$$C^0_{\beta,\alpha}(V,\mathfrak{g}):=\{x\in \mathfrak{g}| \alpha(x)=x\},$$
and the differential $\delta_{\beta,\alpha}:C^0_{\alpha,\beta}(V,\mathfrak{g})\rightarrow C^1_{\alpha,\beta}(V,\mathfrak{g})$ is defined by 
\begin{equation}\label{diff_0:1}
\delta_{\beta,\alpha}(x)(v):=\rho_T(\beta^{-1}(v))(x)\quad\mbox{for }v\in V,~x\in C^0_{\beta,\alpha}(V,\mathfrak{g}).
\end{equation}

Note that in this case we can extend the bracket $\{\!\!\{-,- \}\!\!\}$ defined by equation \eqref{derived bracket} to $\widetilde{C}^*_{\beta,\alpha}(V,\mathfrak{g})$. In particular, for $x,y\in\mathfrak{g},$ and $P\in C^n_{\beta,\alpha}(V,\mathfrak{g})$, the bracket $\{\!\!\{-,- \}\!\!\}$ is given by 

\begin{align}
\nonumber
\{\!\!\{x,y\}\!\!\}=&[x,y],\\\nonumber
\{\!\!\{P,x\}\!\!\}(v_1,v_2,\ldots,v_{n})=&\sum_{\tau\in S_{1,n-1}}(-1)^{|\tau|}~ P\Big(\{x,\beta^{-1}(v_{\tau(1)})\},v_{\tau(2)},\ldots, v_{\tau(n)} \Big)\\\nonumber
&+[\alpha^{-1}P(v_{1},\ldots,v_{n}),\alpha^{n-1}(x)].
\end{align}
In particular, 
\begin{equation}\label{diff_0:2}
\{\!\!\{T,x\}\!\!\}(v)= T(\{x,\beta^{-1}(v)\})+[\alpha^{-1}T(v),(x)].
\end{equation}

By equations \eqref{diff_0:1} and \eqref{diff_0:2}, it is clear that $\delta_{\beta,\alpha}$ coincides with $\delta_T$ at $0$-degree elements in $\widetilde{C}^*_{\beta,\alpha}(V,\mathfrak{g})$. i.e.,  
\begin{align*}
\delta_{\beta,\alpha}(x)(v)=&\rho_T(\beta^{-1}(v))(x)\\
=&[T(\beta^{-1}(v)),x]+T\{x,\beta^{-1}(v)\}\\
=& [\alpha^{-1}T(v),x] + T(\{x,\beta^{-1}(v)\})\\
=&\{\!\!\{T,x\}\!\!\}(v),
\end{align*}
for all $v\in V,~x\in C^0_{\beta,\alpha}(V,\mathfrak{g})$. Therefore, the cohomologies of the complexes $(\widetilde{C}^*_{\beta,\alpha}(V,\mathfrak{g}),\delta_{\beta,\alpha})$ and $(\widetilde{C}^*_{\beta,\alpha}(V,\mathfrak{g}),\delta_T)$ are the same and denoted by $\widetilde{H}^*_{\beta,\alpha}(V,\mathfrak{g})$. In the sequel, we show that the cohomology $\widetilde{H}^*_{\beta,\alpha}(V,\mathfrak{g})$ is deformation cohomology for an $\mathcal{O}$-operator on the regular hom-Lie algebra $(\mathfrak{g},[~,~],\alpha)$ with respect to a representation $(V,\beta,\rho)$.

\subsection{Linear deformations}
Let $T:V\rightarrow \mathfrak{g}$ be an $\mathcal{O}$-operator on a hom-Lie algebra $(\mathfrak{g},[~,~],\alpha)$  with respect to a representation $(V,\beta,\rho)$. Let us consider a linear sum $T_t:=T+t\mathfrak{T}$ for some element $\mathfrak{T}\in C^1_{\beta,\alpha}(V,\mathfrak{g})$. If $T_t$ is an $\mathcal{O}$-operator on the hom-Lie algebra $(\mathfrak{g},[~,~],\alpha)$ with respect to the representation $(V,\beta,\rho)$, then $T_t$ is called a linear deformation of $T$ generated by the element $\mathfrak{T}$. The map $T_t=T+t\mathfrak{T}$ is a linear deformation of $T$ if it satisfies the following identities
\begin{align*}
T_t\circ \beta&=\alpha\circ T_t,\\
[T_t(v),T_t(w)]&=T_t\big(\{T_t(v),w\}-\{T_t(w),v\}\big),\quad\mbox{for all }v,w\in V.
\end{align*}
Equivalently, 
\begin{equation}\label{commuting condition}
\mathfrak{T}\circ \beta =\alpha\circ \mathfrak{T}
\end{equation}
and for all $v,w \in V$, we get
\begin{equation}\label{cocycle condition for linear def}
[T(v),\mathfrak{T}(w)]+[\mathfrak{T}(v),T(w)]=T\big(\{\mathfrak{T}(v),w\}-\{\mathfrak{T}(w),v\}\big)+\mathfrak{T}\big(\{T(v),w\}-\{T(w),v\}\big),
\end{equation} 
\begin{equation}\label{O-operator condition for generator}
[\mathfrak{T}(v),\mathfrak{T}(w)]=\mathfrak{T}\big(\{\mathfrak{T}(v),w\}-\{\mathfrak{T}(w),v\}\big).
\end{equation}
Thus, $T_t$ is a linear deformation of $T$ if and only if conditions \eqref{commuting condition}-\eqref{O-operator condition for generator} hold. Observe that condition \eqref{cocycle condition for linear def} implies that $\delta_T(\mathfrak{T})=0$. Moreover, it follows from \eqref{commuting condition} and \eqref{O-operator condition for generator} that the map $\mathfrak{T}$ is an $\mathcal{O}$-operator on the hom-Lie algebra $(\mathfrak{g},[~,~],\alpha)$ with respect to the representation $(V,\beta,\rho)$.

\begin{definition}
Two linear deformations $T_t^1:=T+t\mathfrak{T}_1$ and $T_t^2:=T+t\mathfrak{T}_2$ are said to be equivalent if there exist an element $x\in \mathfrak{g}$ such that $\alpha(x)=x$ and the pair $(\mathsf{Id}_{\mathfrak{g}}+t\mathsf{ad}^\dagger_x,\mathsf{Id}_V+t\rho(x)^\dagger)$ is a homomorphism of $\mathcal{O}$-operators from $T_t^1$ to $T_t^2$. 
\end{definition}
Let us recall from Definition \ref{morphism of O-operators} that the pair $(\mathsf{Id}_{\mathfrak{g}}+t\mathsf{ad}^\dagger_x,\mathsf{Id}_V+t\rho(x)^\dagger)$ is a homomorphism of $\mathcal{O}$-operators from $T_t^1$ to $T_t^2$ if the following conditions are satisfied
\begin{enumerate}[label=(\roman*)]
\item The map $\mathsf{Id}_{\mathfrak{g}}+t\mathsf{ad}^\dagger_x$ is a hom-Lie algebra homomorphism,
\item $\beta\circ(\mathsf{Id}_{\mathfrak{g}}+t\rho(x)^\dagger)=(\mathsf{Id}_{\mathfrak{g}}+t\rho(x)^\dagger)\circ\beta,$ 
\item $(T+t\mathfrak{T}_2)\circ(\mathsf{Id}_V+t\rho(x)^\dagger)=(\mathsf{Id}_{\mathfrak{g}}+t\mathsf{ad}_x^\dagger)\circ(T+t\mathfrak{T}_1),$ 
\item $\rho\big((\mathsf{Id}_{\mathfrak{g}}+t\mathsf{ad}_x^\dagger)(y)\big)\big((\mathsf{Id}_{\mathfrak{g}}+t\rho(x)^\dagger)(v)\big)=(\mathsf{Id}_{\mathfrak{g}}+t\rho(x)^\dagger)\big(\rho(y)(v)\big),$ for all $y\in \mathfrak{g}$ and $v\in V$.
\end{enumerate}
From the condition (i), 
$$(\mathsf{Id}_{\mathfrak{g}}+t\mathsf{ad}_x^\dagger)[y,z]=[(\mathsf{Id}_{\mathfrak{g}}+t\mathsf{ad}_x^\dagger)(y),(\mathsf{Id}_{\mathfrak{g}}+t\mathsf{ad}_x^\dagger)(z)],\quad\mbox{for all }y,z\in \mathfrak{g}.$$
On comparing the coefficients of $t^2$ on both sides of the above identity, we get 
$$[[x,\alpha^{-1}(y)],[x,\alpha^{-1}(z)]]=0,\quad\mbox{for all }y,z\in\mathfrak{g}.$$ 
Clearly, invertibility of $\alpha$ implies that the element $x$ satisfies
\begin{equation}\label{equ-linear:cond1}
[[x,y],[x,z]]=0,\quad\mbox{for all }y,z\in\mathfrak{g}.
\end{equation}
Since $\alpha(x)=x$, the condition (ii) holds. It easily follows that the condition (iii) is equivalent to the following identities
\begin{equation}\label{equ-linear:cond2}
\mathfrak{T}_1(v)-\mathfrak{T}_2(v)=T\rho(x)(\beta^{-1}(v))+[T\beta^{-1}(v),x]=\delta_T(x)(v),
\end{equation}
\begin{equation}\label{equ-linear:cond3}
\mathfrak{T}_2\rho(x)(v)=[x,\mathfrak{T}_1(v)],\quad\mbox{for all }v\in V.
\end{equation}
The last condition (iv) implies that the element $x$ also satisfies
$$\rho([x,\alpha^{-1}(y)])\rho(x)(\beta^{-1}(v))=0, \quad\mbox{for all } y\in \mathfrak{g}, ~v\in V,$$
equivalently, by using invertibility of $\alpha$ and $\beta$, we have
\begin{equation}\label{equ-linear:cond4}
\rho([x,y])\rho(x)(v)=0, \quad\mbox{for all } y\in \mathfrak{g}, ~v\in V.
\end{equation}
\begin{theorem}
Let $T:V\rightarrow \mathfrak{g}$ be an $\mathcal{O}$-operator. Let $T_t^1:=T+t\mathfrak{T}_1$ and $T_t^1:=T+t\mathfrak{T}_1$ be two equivalent linear deformations of $T$. Then, $\mathfrak{T}_1$ and $\mathfrak{T}_2$ belong to the same cohomology class in $H^1_{\beta,\alpha}(V,\mathfrak{g})$.
\end{theorem}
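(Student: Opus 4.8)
The plan is to extract the two facts needed for a cohomology statement---that each generator is a cocycle, and that their difference is a coboundary---directly from the linear-deformation and equivalence conditions already recorded above, and then to read off the conclusion from the definition of $H^1_{\beta,\alpha}(V,\mathfrak{g})$.

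First I would record that since $T^1_t = T + t\mathfrak{T}_1$ and $T^2_t = T + t\mathfrak{T}_2$ are linear deformations of $T$, each generator $\mathfrak{T}_i$ satisfies the cocycle condition \eqref{cocycle condition for linear def}, which was already observed to be equivalent to $\delta_T(\mathfrak{T}_i)=0$. Hence $\mathfrak{T}_1$ and $\mathfrak{T}_2$ are both $1$-cocycles of the complex $(C^*_{\beta,\alpha}(V,\mathfrak{g}),\delta_T)$ and in particular define cohomology classes in $H^1_{\beta,\alpha}(V,\mathfrak{g})$.

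Next I would invoke the equivalence. By definition there is an element $x\in\mathfrak{g}$ with $\alpha(x)=x$, so that $x$ lies in the space of $0$-cochains $C^0_{\beta,\alpha}(V,\mathfrak{g})=\{x\in\mathfrak{g}\mid \alpha(x)=x\}$, and the pair $(\mathsf{Id}_{\mathfrak{g}}+t\mathsf{ad}^\dagger_x,\mathsf{Id}_V+t\rho(x)^\dagger)$ is a homomorphism of $\mathcal{O}$-operators from $T^1_t$ to $T^2_t$. Condition (iii) of this homomorphism was shown to be equivalent to \eqref{equ-linear:cond2}, namely $\mathfrak{T}_1(v)-\mathfrak{T}_2(v)=\delta_T(x)(v)$ for all $v\in V$; the identification of the right-hand side with the coboundary $\delta_T(x)$ uses formula \eqref{diff_0:2} for $\{\!\!\{T,x\}\!\!\}$ together with the already-established fact that $\delta_{\beta,\alpha}$ and $\delta_T$ agree on $0$-cochains. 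Thus $\mathfrak{T}_1-\mathfrak{T}_2=\delta_T(x)$ is a coboundary.

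Combining the two steps, $\mathfrak{T}_1$ and $\mathfrak{T}_2$ are cohomologous $1$-cocycles and therefore represent the same class in $H^1_{\beta,\alpha}(V,\mathfrak{g})$, which is the assertion. I do not expect a genuine obstacle here: the substantive computation is the derivation of \eqref{equ-linear:cond2} from the homomorphism conditions, which has already been carried out above, so the theorem is essentially a matter of reading off that identity. The only point requiring a moment's care is confirming that $x$ indeed belongs to $C^0_{\beta,\alpha}(V,\mathfrak{g})$, which is immediate from the constraint $\alpha(x)=x$ built into the definition of equivalence.
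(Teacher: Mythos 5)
Your proposal is correct and follows essentially the same route as the paper, whose proof simply cites equation \eqref{equ-linear:cond2}: the difference $\mathfrak{T}_1-\mathfrak{T}_2=\delta_T(x)$ is a coboundary of the $0$-cochain $x\in C^0_{\beta,\alpha}(V,\mathfrak{g})$, and each $\mathfrak{T}_i$ is a $1$-cocycle by \eqref{cocycle condition for linear def}. You merely spell out the cocycle step that the paper leaves implicit.
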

\begin{proof}
The proof follows from equation \eqref{equ-linear:cond2}.
\end{proof}
 
\begin{definition}
Let $T: V\rightarrow \mathfrak{g}$ be an $\mathcal{O}$-operator on a hom-Lie algebra $(\mathfrak{g},[~,~],\alpha)$ with respect to a representation $(V,\beta,\rho)$. A linear deformation $T_t:T+t\mathfrak{T}$ is said to be trivial if it is equivalent to the deformation $T_0=T$.
\end{definition}

\begin{definition}
 Let $T: V\rightarrow \mathfrak{g}$ be an $\mathcal{O}$-operator on a hom-Lie algebra $(\mathfrak{g},[~,~],\alpha)$ with respect to a representation $(V,\beta,\rho)$. An element $x\in \mathfrak{g}$ is called a Nijenhuis element associated to the operator $T$ if $x$ satisfies $\alpha(x)=x$, the identities \eqref{equ-linear:cond1}, \eqref{equ-linear:cond4}, and the identity
$$[x,T\rho(x)(v)+[T(v),x]]=0,\quad \mbox{for all } v\in V.$$ 
\end{definition}
Let us denote the set of Nijenhuis elements associated to the $\mathcal{O}$-operator $T$ by $\mathsf{Nij}(T)$.

\begin{theorem}\label{trivial deformations}
Let $T: V\rightarrow \mathfrak{g}$ be an $\mathcal{O}$-operator on a hom-Lie algebra $(\mathfrak{g},[~,~],\alpha)$ with respect to a representation $(V,\beta,\rho)$. For any element $x\in \mathsf{Nij}(T),$ the linear deformation $T_t:T+t\mathfrak{T}$ generated by $\mathfrak{T}:=\delta_T(x)$ is a trivial deformation of $T$.
\end{theorem}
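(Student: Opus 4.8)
The plan is to exhibit the triviality directly, by producing the equivalence of $T_t=T+t\delta_T(x)$ with $T_0=T$. Concretely, I would set $\mathfrak{T}_1=\mathfrak{T}=\delta_T(x)$ and $\mathfrak{T}_2=0$ (so that $T_t^2=T_0=T$), and verify that the pair $(\mathsf{Id}_{\mathfrak{g}}+t\,\mathsf{ad}_x^\dagger,\ \mathsf{Id}_V+t\,\rho(x)^\dagger)$, where $\mathsf{ad}_x^\dagger(y)=[x,\alpha^{-1}(y)]$ and $\rho(x)^\dagger(v)=\rho(x)(\beta^{-1}(v))$, is a homomorphism of $\mathcal{O}$-operators from $T_t$ to $T$, i.e.\ that it satisfies conditions (i)--(iv) preceding \eqref{equ-linear:cond2}. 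The guiding principle is that the verification of each condition splits into an order-$t^1$ and an order-$t^2$ part: the order-$t^1$ parts are either the definition of $\delta_T(x)$ or forced by the axioms of Definition \ref{Rep-hom-Lie}, while each order-$t^2$ part is exactly killed by one of the three defining identities of a Nijenhuis element (together with $\alpha(x)=x$).

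First I would dispose of the two cheap conditions. Condition (ii) is immediate from $\alpha(x)=x$: axiom (2) of Definition \ref{Rep-hom-Lie} then gives $\rho(x)\beta=\beta\rho(x)$, whence $\beta\circ\rho(x)^\dagger=\rho(x)^\dagger\circ\beta$. For condition (i), commutativity of $\mathsf{Id}_{\mathfrak{g}}+t\,\mathsf{ad}_x^\dagger$ with $\alpha$ again follows from $\alpha(x)=x$; its order-$t^1$ coefficient is precisely the hom-Jacobi identity and so holds automatically, and its order-$t^2$ coefficient collapses, after using invertibility of $\alpha$, to \eqref{equ-linear:cond1}.

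The substantive step is condition (iii), $(T+t\mathfrak{T}_2)\circ(\mathsf{Id}_V+t\rho(x)^\dagger)=(\mathsf{Id}_{\mathfrak{g}}+t\,\mathsf{ad}_x^\dagger)\circ(T+t\mathfrak{T}_1)$ with $\mathfrak{T}_2=0$, $\mathfrak{T}_1=\delta_T(x)$. Its order-$t^1$ coefficient is exactly \eqref{equ-linear:cond2}, which holds by the definition $\mathfrak{T}=\delta_T(x)$. The order-$t^2$ coefficient demands $\mathsf{ad}_x^\dagger\circ\delta_T(x)=0$, and this is where the least transparent Nijenhuis identity must enter. I would handle it by setting $g(v):=T(\{x,v\})+[T(v),x]$, so that $\delta_T(x)=g\circ\beta^{-1}$, and first establishing the auxiliary covariance $\alpha\circ g=g\circ\beta$ (which uses $T\circ\beta=\alpha\circ T$, $\alpha(x)=x$ and $\beta\rho(x)=\rho(x)\beta$). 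This covariance rewrites $\mathsf{ad}_x^\dagger(\delta_T(x)(v))=[x,\alpha^{-1}g\beta^{-1}(v)]$ as $[x,g(\beta^{-2}(v))]$, which vanishes by the third Nijenhuis identity $[x,T(\{x,v\})+[T(v),x]]=0$ applied to $\beta^{-2}(v)$. I expect this order-$t^2$ term of condition (iii) to be the main obstacle, precisely because it is the only place the third Nijenhuis identity is used and because it requires isolating the covariance of $g$ before the twists $\alpha^{-1},\beta^{-1}$ can be absorbed.

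Finally, for condition (iv) the order-$t^1$ coefficient is an identity among $\rho$, $\alpha^{-1}$ and $\beta^{-1}$ that follows directly from axioms (2) and (3) of Definition \ref{Rep-hom-Lie} (in particular from $\rho(\alpha^{-1}y)\beta^{-1}=\beta^{-1}\rho(y)$), and its order-$t^2$ coefficient reduces, by invertibility of $\alpha$ and $\beta$, to \eqref{equ-linear:cond4}. Once all four conditions are checked, the pair $(\mathsf{Id}_{\mathfrak{g}}+t\,\mathsf{ad}_x^\dagger,\ \mathsf{Id}_V+t\,\rho(x)^\dagger)$ is a homomorphism of $\mathcal{O}$-operators from $T_t$ to $T_0=T$; since this pair is invertible over $k[t]$ and $T$ is an $\mathcal{O}$-operator, transport of structure also confirms that $T_t$ is a genuine linear deformation (its infinitesimal $\mathfrak{T}=\delta_T(x)$ is automatically a cocycle since $\delta_T^2=0$). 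Being equivalent to $T_0=T$, the deformation $T_t$ is trivial, as claimed.
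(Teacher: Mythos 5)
Your proof is correct, and for the equivalence half it follows the paper's route exactly: the same pair $(\mathsf{Id}_{\mathfrak{g}}+t\,\mathsf{ad}_x^\dagger,\mathsf{Id}_V+t\rho(x)^\dagger)$, with the order-$t$ coefficients absorbed by the hom-Jacobi/representation axioms and the definition of $\delta_T(x)$, and each order-$t^2$ coefficient killed by one of the three Nijenhuis identities (your isolation of the covariance $\alpha\circ g=g\circ\beta$ before invoking $[x,T\rho(x)(v)+[T(v),x]]=0$ is exactly the point the paper leaves implicit). Where you genuinely diverge is in establishing that $T_t$ is a linear deformation at all: the paper verifies \eqref{commuting condition}--\eqref{O-operator condition for generator} directly, and in particular checks the quadratic identity \eqref{O-operator condition for generator} by a computation it outsources to the Lie-algebra case of \cite{Sheng3}; you instead deduce the full $\mathcal{O}$-operator identity for $T_t$ by transport of structure, applying conditions (i), (iii), (iv) to write $\phi_{\mathfrak{g}}\bigl([T_tu,T_tv]-T_t(\{T_tu,v\}-\{T_tv,u\})\bigr)=0$ and then using injectivity of $\mathsf{Id}_{\mathfrak{g}}+t\,\mathsf{ad}_x^\dagger$ on $\mathfrak{g}[t]$ (lowest-degree-coefficient argument) to conclude the bracketed polynomial vanishes identically in $t$. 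This is sound and has a real advantage: it makes the deferred verification of \eqref{O-operator condition for generator} unnecessary, since that identity is precisely the $t^2$-coefficient of what transport of structure delivers. The only caution is logical ordering -- the four conditions must be checked as bare identities first, since the paper's notion of ``homomorphism of $\mathcal{O}$-operators'' presupposes both maps are already $\mathcal{O}$-operators -- but you do respect that ordering, so there is no circularity.
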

\begin{proof}
First, we need to show that $T_t:T+t\mathfrak{T}$ is a linear deformation generated by $\mathfrak{T}:=\delta_{\beta,\alpha}(x)$, where $x\in\mathsf{Nij}(T)$. For this purpose, we need to show that $\mathfrak{T}$ satisfies the equations \eqref{commuting condition}, \eqref{cocycle condition for linear def}, and \eqref{O-operator condition for generator}. By definition of $\delta_{\beta,\alpha}$ at $0$-cochains it is clear that 
$$\alpha\circ \mathfrak{T}(v)=\alpha\circ \delta_{\beta,\alpha}(x)(v)=\delta_{\beta,\alpha}(x)(\beta(v))=\mathfrak{T}(\beta(v)), \quad\mbox{for all }v\in V.$$
i.e., $\mathfrak{T}$ satisfies the equation \eqref{commuting condition}. Since $\mathfrak{T}=\delta_{\beta,\alpha}(x)$, the equation \eqref{cocycle condition for linear def} holds trivially. Moreover, by using the identity $\alpha(x)=x$ and a straightforward calculation similar to the Lie algebra case in \cite{Sheng3}, it follows that $\mathfrak{T}$ satisfies the equation \eqref{O-operator condition for generator}. 

Now, we need to show that the linear deformation $T_t$ is trivial. Since $x\in \mathsf{Nij}(T)$, it immediately follows that the pair $(\mathsf{Id}_\mathfrak{g}+t\mathsf{ad}_x^{\dagger},\mathsf{Id}_V+t\rho(x)^\dagger)$ is a homomorphism of $\mathcal{O}$-operators from $T_t$ to $T$. 
\end{proof}
\subsection{Formal deformations}
Let $(\mathfrak{g},[~,~],\alpha)$ be a hom-Lie algebra  with a representation $(V,\beta,\rho)$. Let $k[[t]]$ be the formal power series ring in one variable $t$ and $\mathfrak{g}[[t]]$ be the formal power series in $t$ with coefficients in $\mathfrak{g}$. Then the triplet $(\mathfrak{g}[[t]],[~,~]_t,\alpha_t)$ is a hom-Lie algebra, where the bracket $[~,~]_t$ and the structure map $\alpha_t$ are obtained by extending $[~,~]$ and the map $\alpha$ linearly over the ring $k[[t]]$. Moreover, the map $\rho:\mathfrak{g}\rightarrow \mathsf{End}(V)$ and $\beta:V\rightarrow V$ can be extended linearly over $k[[t]]$ to obtain $k[[t]]$-linear maps $\rho_t:\mathfrak{g}[[t]]\otimes V[[t]] \rightarrow V[[t]]$ and $\beta_t:V[[t]]\rightarrow V[[t]]$. Then the triplet $(V[[t]],\beta_t,\rho_t)$ is a hom-Lie algebra representation of $(\mathfrak{g}[[t]],[~,~]_t,\alpha_t)$.      

\begin{definition}
 Let $T: V\rightarrow \mathfrak{g}$ be an $\mathcal{O}$-operator on a hom-Lie algebra $(\mathfrak{g},[~,~],\alpha)$ with respect to a representation $(V,\beta,\rho)$. A formal deformation of $T$ is given by 
$$\textstyle{T_t=T_0+\sum\limits_{i\geq1}t^i T_i }, \quad\mbox{ with }  T_0=T,~T_i\in C^1_{\beta,\alpha}(V,\mathfrak{g})$$ 
such that $T_t:V[[t]]\rightarrow\mathfrak{g}[[t]]$ is an $\mathcal{O}$-operator on $(\mathfrak{g}[[t]],[~,~]_t,\alpha_t)$ with respect to the representation $(V[[t]],\beta_t,\rho_t)$. 
\end{definition}
Equivalently, 
\begin{equation}\label{fdef:eq1}
T_t(\beta(v))=\alpha (T_t(v)),
\end{equation}
\begin{equation}\label{fdef:eq2}
[T_t v,T_t w]=T_t\big(\{T_t v, w\}-\{T_t w,v\}\big),\quad\mbox{ for all }  u,v\in V.
\end{equation}
Note that condition \eqref{fdef:eq1} holds trivially since $T_i\in C^1_{\beta,\alpha}(V,\mathfrak{g}),$ for all $i\geq 0$. For $k\geq 0$, if we compare the coefficients of $t^k$ from both sides of equation \eqref{fdef:eq2}, then we obtain the following system of equations
\begin{equation}\label{fdef:system}
\sum\limits_{i+j=k}[T_i v,T_j w]=\sum\limits_{i+j=k}T_i\big(\{T_j v, w\}-\{T_j w,v\}\big),\quad\mbox{ for } k=0,1,2,\ldots.
\end{equation}

The 1-cochain $T_1\in C^1_{\beta,\alpha}(V,\mathfrak{g})$ is called the infinitesimal of the deformation $T_t$. More generally, if $T_i=0$ for $1\leq i\leq n-1$ and $T_n$ is a non-zero cochain, then $T_n$ is called the $n$-infinitesimal of the deformation $T_t$. From equation \eqref{fdef:system}, the case $k=1$ yields the expression
$$[T_1 v,T w]+[T v,T_1 w]=T_1\big(\{T v, w\}-\{T w,v\}\big)+T\big(\{T_1 v, w\}-\{T_1 w,v\}\big),\quad\mbox{ for all }  v,w\in V,$$
which is equivalent to the condition: $\delta_T(T_1)=0$. Therefore, we get the following proposition.

\begin{proposition}\label{infinitesimal}
The infinitesimal of the deformation $T_t$ is a $1$-cocycle in the cohomology of the $\mathcal{O}$-operator $T$. More generally, the $m$-infinitesimal is a $1$-cocycle. 
\end{proposition}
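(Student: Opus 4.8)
The plan is to prove both assertions of Proposition \ref{infinitesimal} by directly extracting the relevant coefficient from the deformation equation \eqref{fdef:system} and recognizing it as the cocycle condition $\delta_T(T_m)=0$. For the first assertion about the infinitesimal $T_1$, the work is essentially already done in the preceding discussion: the $k=1$ case of \eqref{fdef:system} is written out explicitly and observed to be equivalent to $\delta_T(T_1)=0$, which says precisely that $T_1$ is a $1$-cocycle in the cohomology of the $\mathcal{O}$-operator $T$. So the first part is immediate from the recalled computation and the definition of the coboundary $\delta_T$.

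For the more general statement, I would proceed as follows. Suppose $T_i=0$ for $1\le i\le n-1$ and $T_n\neq 0$, so that $T_n$ is the $n$-infinitesimal. I would examine the coefficient of $t^n$ in equation \eqref{fdef:eq2}, i.e. the $k=n$ case of \eqref{fdef:system}:
\begin{equation*}
\sum_{i+j=n}[T_i v,T_j w]=\sum_{i+j=n}T_i\big(\{T_j v,w\}-\{T_j w,v\}\big).
\end{equation*}
The key observation is that because $T_i=0$ for all indices $i$ with $1\le i\le n-1$, every term in these sums in which both indices $i$ and $j$ lie strictly between $0$ and $n$ vanishes. Hence the only surviving contributions are those where one index is $0$ and the other is $n$; the pairs $(i,j)=(n,0)$ and $(0,n)$ are the only ones that survive. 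Collecting these, and using $T_0=T$, the equation reduces to
\begin{equation*}
[T_n v,T w]+[T v,T_n w]=T_n\big(\{T v,w\}-\{T w,v\}\big)+T\big(\{T_n v,w\}-\{T_n w,v\}\big),
\end{equation*}
which has exactly the same shape as the $k=1$ equation with $T_1$ replaced by $T_n$.

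The final step is to recognize this reduced identity as the cocycle condition. Comparing with the cocycle equation \eqref{cocycle condition for linear def} (equivalently, with the explicit formula for $\{\!\!\{T,-\}\!\!\}$ on $C^1_{\beta,\alpha}(V,\mathfrak{g})$), this is precisely $\delta_T(T_n)=0$, so $T_n$ is a $1$-cocycle. I do not expect any serious obstacle here: the entire argument is a bookkeeping reduction in which the vanishing of the intermediate $T_i$ collapses the convolution sum to its two boundary terms. The only point requiring mild care is verifying that the surviving terms assemble exactly into the coboundary formula $\delta_T(T_n)$; this is a direct appeal to the explicit description of $\delta_T$ already established earlier in the excerpt, together with the fact that each $T_n\in C^1_{\beta,\alpha}(V,\mathfrak{g})$ automatically satisfies the compatibility $T_n\circ\beta=\alpha\circ T_n$, so no additional condition on the structure maps is needed.
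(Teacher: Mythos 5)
Your proposal is correct and takes essentially the same route as the paper: the paper derives the $k=1$ case of \eqref{fdef:system} immediately before the proposition and identifies it with the condition $\delta_T(T_1)=0$, leaving the general case to the reader. Your reduction of the coefficient of $t^n$ to the two boundary terms $(i,j)=(0,n)$ and $(n,0)$, using the vanishing of the intermediate $T_i$, is exactly the intended argument for the $m$-infinitesimal.
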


Now, we consider the equivalence of two formal deformations of an $\mathcal{O}$-operator. The definition is motivated from the Lie algebra case \cite{Sheng3}.
\begin{definition}\label{Def:equivalence}
Two deformations $T_t$ and $\overline{T}_t$ of the $\mathcal{O}$-operator $T$ are said to be equivalent if there exists an element $x\in C^{0}_{\beta,\alpha}(V,\mathfrak{g})$, $k$-linear maps $\phi^{\mathfrak{g}}_i:\mathfrak{g}\rightarrow\mathfrak{g}$ and $\phi^V_i:V\rightarrow V$, for $i\geq 2$, such that the pair $(\phi^{\mathfrak{g}}_t,\phi^V_t),$ consisting of
$$\phi^{\mathfrak{g}}_t=\mathsf{Id}_{\mathfrak{g}}+t(\mathsf{ad}^{\dagger}_x)+\sum_{i\geq 2}t^i \phi^{\mathfrak{g}}_i \quad \mbox{and}\quad \phi^V_t=\mathsf{Id}_{V}+t\rho(x)^{\dagger}+\sum_{i\geq 2}t^i \phi^V_i,$$
is a formal isomorphism from $T_t$ to ${T}^{\prime}_t$. Here, for $x\in C^{0}_{\beta,\alpha}(V,\mathfrak{g})$, the maps $\mathsf{ad}_x^{\dagger}:\mathfrak{g}\rightarrow \mathfrak{g}$ and $\rho(x)^{\dagger}:V\rightarrow V$ are given by 
$$\mathsf{ad}^{\dagger}_x(y):=\alpha^{-1}\big(\mathsf{ad}_x(y)\big)\quad\mbox{and  }\rho(x)^{\dagger}(v):=\beta^{-1}\big(\rho(x)(v)\big),\quad\mbox{for }y\in \mathfrak{g}, ~v\in V.$$  
\end{definition}

With the above notations, for all $y,z\in \mathfrak{g}$ and $v\in V$, the equivalence of two deformations $T_t$ and $\overline{T}_t$ gives the following conditions
\begin{enumerate}
\item $\phi^{\mathfrak{g}}_t\circ T_t=\overline{T}_t\circ \phi^{V}_t,$\\\vspace{-4.5mm}
\item $\phi^{\mathfrak{g}}_t[y,z]=[\phi^{\mathfrak{g}}_t(y),\phi^{\mathfrak{g}}_t(z)]$,\\\vspace{-4mm}
\item $\rho_t(\phi^{\mathfrak{g}}_t(y))(\phi^V_t(v))=\phi^V_t\big(\rho(y)(v)\big)$,\\\vspace{-4.5mm}
\item $\phi^{\mathfrak{g}}_t\circ \alpha=\alpha\circ\phi^{\mathfrak{g}}_t~~$ and $~~\phi^V_t\circ\beta=\beta\circ\phi^V_t$.
\end{enumerate}

On comparing the coefficients of $t$ from both sides of the condition $(1)$, we get 
\begin{align*}
T_1(v)-\overline{T}_1(v)=&T\beta^{-1}(\{x,v\})-\alpha^{-1}[x,Tv]\\
=&T(\{x,\beta^{-1}(v)\})+[T(\beta^{-1}(v)),x]\\
=&\rho_T(\beta^{-1}(v))(x)=(\delta_{\beta,\alpha}(x))(v)            
\end{align*}
Consequently, we obtain the following result.
\begin{proposition}
The infinitesimals of equivalent deformations belong to the same cohomology class in $\widetilde{H}^2_{\beta,\alpha}(V,\mathfrak{g})$.
\end{proposition}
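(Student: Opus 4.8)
The plan is to reduce the statement to a single coboundary identity. By Proposition \ref{infinitesimal}, the infinitesimals $T_1$ and $\overline{T}_1$ of the two equivalent deformations $T_t$ and $\overline{T}_t$ are both $1$-cocycles, so that $\delta_T(T_1)=0=\delta_T(\overline{T}_1)$; hence it suffices to exhibit a $0$-cochain whose coboundary equals $T_1-\overline{T}_1$. The natural candidate is the element $x\in C^{0}_{\beta,\alpha}(V,\mathfrak{g})$ supplied by Definition \ref{Def:equivalence}, and the claim will follow once we extract the order-$t$ part of the compatibility condition $(1)$, namely $\phi^{\mathfrak{g}}_t\circ T_t=\overline{T}_t\circ\phi^{V}_t$.

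First I would write $\phi^{\mathfrak{g}}_t=\mathsf{Id}_{\mathfrak{g}}+t\,\mathsf{ad}^{\dagger}_x+\sum_{i\geq 2}t^i\phi^{\mathfrak{g}}_i$ and $\phi^{V}_t=\mathsf{Id}_{V}+t\,\rho(x)^{\dagger}+\sum_{i\geq 2}t^i\phi^{V}_i$, together with $T_t=T+tT_1+\cdots$ and $\overline{T}_t=T+t\overline{T}_1+\cdots$, and compare the coefficients of $t^1$ on both sides of condition $(1)$. Since the higher maps $\phi^{\mathfrak{g}}_i,\phi^{V}_i$ with $i\geq 2$ do not contribute at this order, this yields $T_1+\mathsf{ad}^{\dagger}_x\circ T=\overline{T}_1+T\circ\rho(x)^{\dagger}$, that is $T_1(v)-\overline{T}_1(v)=T(\beta^{-1}\{x,v\})-\alpha^{-1}[x,Tv]$ after inserting the definitions $\mathsf{ad}^{\dagger}_x(y)=\alpha^{-1}[x,y]$ and $\rho(x)^{\dagger}(v)=\beta^{-1}\{x,v\}$. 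This is exactly the first line of the computation displayed just above the statement.

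Next I would simplify the right-hand side using the intertwining relation $T\circ\beta=\alpha\circ T$ (equivalently $\alpha^{-1}\circ T=T\circ\beta^{-1}$), the representation identity $\{\alpha(x),\beta(w)\}=\beta\{x,w\}$, and the fact that $\alpha(x)=x$. These reduce the difference to $T(\{x,\beta^{-1}(v)\})+[T(\beta^{-1}(v)),x]=\rho_T(\beta^{-1}(v))(x)$, which by \eqref{diff_0:1} is precisely $\delta_{\beta,\alpha}(x)(v)$. Recalling that $\delta_{\beta,\alpha}$ agrees with $\delta_T$ on $0$-cochains, we get $T_1-\overline{T}_1=\delta_T(x)$, so $T_1$ and $\overline{T}_1$ are cohomologous, which is the assertion.

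There is no substantial obstacle here: the entire content lies in the order-$t$ comparison, and the subsequent simplification is routine once the placements of $\alpha^{\pm1}$ and $\beta^{\pm1}$ coming from the dagger operators are tracked carefully. The only inputs actually used beyond condition $(1)$ are that $\alpha(x)=x$ (so that $x$ is a genuine $0$-cochain in $C^{0}_{\beta,\alpha}(V,\mathfrak{g})$, as guaranteed by Definition \ref{Def:equivalence}) and the relation $T\circ\beta=\alpha\circ T$; the remaining equivalence conditions $(2)$--$(4)$ play no role at the infinitesimal level and can be ignored for this proof.
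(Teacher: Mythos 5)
Your proof is correct and follows essentially the same route as the paper: the paper's own argument is exactly the comparison of the coefficients of $t$ in condition $(1)$, yielding $T_1(v)-\overline{T}_1(v)=T\beta^{-1}(\{x,v\})-\alpha^{-1}[x,Tv]=\rho_T(\beta^{-1}(v))(x)=(\delta_{\beta,\alpha}(x))(v)$, with the same use of $T\circ\beta=\alpha\circ T$ and $\alpha(x)=x$. (As a minor aside, the difference $T_1-\overline{T}_1$ is a $1$-cochain, so the natural home for the class is $\widetilde{H}^1_{\beta,\alpha}(V,\mathfrak{g})$; this is an indexing slip in the statement rather than in your argument.)
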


\subsection{Obstructions in extending a finite order deformation to the next order}

Let a linear map $T:V\rightarrow \mathfrak{g}$ be an $\mathcal{O}$-operator on a hom-Lie algebra $(\mathfrak{g},[~,~],\alpha)$ with respect to a representation $(V,\beta,\rho)$. An order $n$ deformation of the $\mathcal{O}$-operator $T$ is given by a $k[[t]]/(t^{n+1})$-linear map 
$${T_t=T_0+\sum\limits^n_{i=1}~t^i ~T_i }, \quad\mbox{ with  }  T_0=T,~~T_i\in C^1_{\beta,\alpha}(V,\mathfrak{g})$$ 
such that 
$$[T_t(v),T_t(w)]=T_t(\{T_t(v),w\}-\{T_t(w),v\}) \quad\mbox{modulo}~~ t^{n+1}, \quad \mbox{for all }v,w\in V.$$
Equivalently,
$$\sum\limits_{\substack{i+j=k\\ ~i,j\geq 0}}\{\!\!\{T_i,T_j\}\!\!\}=0\quad\quad\mbox{for any }~~k=0,1,\ldots,n.$$

\begin{definition}
Let $\textstyle{T_t=\varphi_0+\sum^n_{i=1}t^i T_i }$ be an order $n$ deformation of the $\mathcal{O}$-operator $T$. We say that $T_t$ extends to a deformation of order $n+1$ if there exists a $1$-cochain $T_{n+1}\in C^1_{\beta,\alpha}(V,\mathfrak{g})$ such that $\widetilde{T}_t=T_t+t^{n+1} T_{n+1} $ is a deformation of order $n+1$.
\end{definition}

Let us observe that the map $\widetilde{T}_t:=T_t+t^{n+1}T_{n+1}$ is an extension of the order $n$ deformation $T_t$ if and only if 
$$\sum_{\substack{i+j=n+1\\i,j\geq 0}}\{\!\!\{T_i,T_j\}\!\!\}=0.$$

\begin{definition}
Let $T_t$ be an order $n$ deformation of the $\mathcal{O}$-operator $T$. Let us consider a $2$-cochain $\Theta_T \in C^2_{\beta,\alpha}(V,\mathfrak{g})$ defined as follows
\begin{equation}\label{Obst}
\Theta_T =-1/2\sum_{i+j=n+1;~i,j>0}\{\!\!\{T_i,T_j\}\!\!\}.
\end{equation}
The $2$-cochain $\Theta_T$ is called the obstruction cochain for extending the deformation $T_t$ of order $n$ to a deformation of order $n+1$. 
From equation \eqref{Obst} and using graded Jacobi identity of the bracket $\{\!\!\{-,-\}\!\!\}$, it follows that $\Theta_T$ is a $2$-cocycle.
\end{definition}

\begin{theorem}\label{hom-Obst}
Let $T_t$ be an order $n$ deformation of $T$. Then the deformation $T_t$ extends to a deformation of order $n+1$ if and only if the cohomology class of the $2$-cocycle $\Theta_T$ vanishes.

\begin{proof}
Let us assume that the order $n$ deformation $\textstyle{T_t=\varphi_0+\sum^n_{i=1}t^i T_i }$ extends to a deformation of order $n+1$. Then, there exists an element $T_{n+1}\in C^1_{\beta,\alpha}(V,\mathfrak{g})$ such that 
$\widetilde{T_t}=T_t+t^{n+1} T_{n+1}$ is an extension of the deformation $T_t$. Thus,  
$$\sum_{\substack{i+j=n+1\\ i,j\geq 0}}\{\!\!\{T_i,T_j\}\!\!\}=0,$$
i.e.,
$$\{\!\!\{T,T_{n+1}\}\!\!\}=-\frac{1}{2} \sum_{\substack{i+j=n+1\\ i,j> 0}}\{\!\!\{T_i,T_j\}\!\!\}.$$
It follows that $\Theta_T=\delta_T(T_{n+1})=\delta_{\beta,\alpha}(T_{n+1})$. Hence, the cohomology class of $\Theta_T$ vanishes.

Conversely, let us assume that $\Theta_T$ is a coboundary. So, there exists a $1$-cochain $T_{n+1}$ such that 
$$
\Theta_T==\delta_{\beta,\alpha}(T_{n+1}). 
$$
Define a map $\widetilde{T_t}:V\rightarrow \mathfrak{g}$ as follows
$$
\widetilde{T_t}=T_t+t^{n+1}T_{n+1}.
$$
Then,
$$\Theta_T=-1/2\sum_{\substack{i+j=n+1\\i,j>0}}\{\!\!\{T_i,T_j\}\!\!\}=\delta_T(T_{n+1}).$$
Since, $\delta_{\beta,\alpha}(T_{n+1})=\delta_T(T_{n+1})=\{\!\!\{T,T_{n+1}\}\!\!\}$, we obtain the following expression
$$\sum_{\substack{i+j=n+1\\i,j\geq 0}}\{\!\!\{T_i,T_j\}\!\!\}=0.$$ 
Therefore, the deformation $T_t$ of order $n$ extends to the deformation $\widetilde{T_t}$ of order $n+1$.
\end{proof}

\end{theorem}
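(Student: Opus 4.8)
The plan is to convert the extension problem into a single cohomological equation by exploiting the Maurer--Cartan picture of Theorem \ref{Maurer Cartan element}. Since $T=T_0$ is a Maurer--Cartan element we have $\{\!\!\{T,T\}\!\!\}=0$, and the induced differential is $\delta_T=\{\!\!\{T,-\}\!\!\}$ as in Remark \ref{dgla}. First I would write the order $n+1$ condition explicitly,
$$\sum_{\substack{i+j=n+1\\ i,j\geq 0}}\{\!\!\{T_i,T_j\}\!\!\}=0,$$
and separate the two boundary terms with $i=0$ and $j=0$, which are the only ones involving the unknown cochain $T_{n+1}$.

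The key algebraic point is that all $T_i$ lie in $C^1_{\beta,\alpha}(V,\mathfrak{g})$, so the graded symmetry of $\{\!\!\{-,-\}\!\!\}$ on degree-one elements gives $\{\!\!\{T,T_{n+1}\}\!\!\}=\{\!\!\{T_{n+1},T\}\!\!\}$, and the two boundary terms combine into $2\{\!\!\{T,T_{n+1}\}\!\!\}$. The remaining sum over $i,j>0$ equals $-2\Theta_T$ by the definition \eqref{Obst}. Hence the order $n+1$ condition is equivalent to the single equation $\{\!\!\{T,T_{n+1}\}\!\!\}=\Theta_T$, i.e. $\delta_T(T_{n+1})=\Theta_T$, and the whole problem reduces to asking whether the $2$-cocycle $\Theta_T$ lies in the image of $\delta_T$.

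With this reduction both implications are immediate. For the forward direction, if $T_t$ extends then the witnessing $T_{n+1}$ satisfies $\delta_T(T_{n+1})=\Theta_T$, so $\Theta_T$ is a coboundary and its class vanishes. Conversely, if $[\Theta_T]=0$ in $\widetilde{H}^2_{\beta,\alpha}(V,\mathfrak{g})$, I would choose a $1$-cochain $T_{n+1}\in C^1_{\beta,\alpha}(V,\mathfrak{g})$ with $\delta_T(T_{n+1})=\Theta_T$ and set $\widetilde{T}_t=T_t+t^{n+1}T_{n+1}$: the equations \eqref{fdef:system} for $k\leq n$ are untouched, the $k=n+1$ equation holds by construction, and the commuting condition \eqref{fdef:eq1} is automatic because membership in $C^1_{\beta,\alpha}(V,\mathfrak{g})$ builds in $T_{n+1}\circ\beta=\alpha\circ T_{n+1}$. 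Here I would invoke that the cohomologies of $(\widetilde{C}^*_{\beta,\alpha}(V,\mathfrak{g}),\delta_T)$ and $(\widetilde{C}^*_{\beta,\alpha}(V,\mathfrak{g}),\delta_{\beta,\alpha})$ agree, so the class genuinely lives in $\widetilde{H}^2_{\beta,\alpha}(V,\mathfrak{g})$; this is precisely where invertibility of $\alpha$ and $\beta$ is needed, since the $0$-cochains and the coboundary on them presuppose those hypotheses.

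The main obstacle is not in these implications but in the input that $\Theta_T$ is $\delta_T$-closed, together with clean bookkeeping of signs. Closedness follows from the graded Jacobi identity applied to $\{\!\!\{T,\sum_{i+j=n+1,\,i,j>0}\{\!\!\{T_i,T_j\}\!\!\}\}\!\!\}$ and the lower-order relations $\sum_{i+j=k}\{\!\!\{T_i,T_j\}\!\!\}=0$ for $k\leq n$, which is the substantive calculation already recorded after \eqref{Obst}; the delicate part is that the Jacobi cancellation consumes exactly those lower-order equations. The remaining care is to reconcile the sign in $\delta_T(P)=(-1)^n\{\!\!\{T,P\}\!\!\}$ with the convention $\delta_T=\{\!\!\{T,-\}\!\!\}$ of Remark \ref{dgla}, so that the identity $\delta_T(T_{n+1})=\Theta_T$ and the identification of $\Theta_T$ as a coboundary are stated consistently.
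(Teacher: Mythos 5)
Your proposal is correct and follows essentially the same route as the paper: separate the $i=0$ and $j=0$ terms of the order-$(n{+}1)$ equation, combine them into $2\{\!\!\{T,T_{n+1}\}\!\!\}$ by the symmetry of the bracket on degree-one elements, and thereby reduce the extension problem to solving $\delta_T(T_{n+1})=\Theta_T$, from which both implications are immediate. Your explicit attention to the $2$-cocycle property of $\Theta_T$ and to the sign $(-1)^n$ relating $\delta_T$ and $\{\!\!\{T,-\}\!\!\}$ is a welcome tightening of points the paper only asserts.
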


\begin{corollary}
Let $T:V\rightarrow \mathfrak{g}$ be an $\mathcal{O}$-operator on a hom-Lie algebra $(\mathfrak{g},[~,~],\alpha)$ with respect to a representation $(V,\beta,\rho)$. If $H^2_{\beta,\alpha}(V,\mathfrak{g})=0$, then any $1$-cocycle in $C^1_{\beta,\alpha}(V,\mathfrak{g})$ is an infinitesimal of some formal deformation of the $\mathcal{O}$-operator $T$.
\end{corollary}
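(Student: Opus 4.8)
The plan is to build the formal deformation one order at a time, taking the given $1$-cocycle as the infinitesimal and invoking Theorem \ref{hom-Obst} at each stage to pass from an order $n$ deformation to an order $n+1$ deformation. Since $H^2_{\beta,\alpha}(V,\mathfrak{g})=0$, no obstruction ever survives, so the inductive construction never terminates and assembles into a genuine formal deformation.

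First I would establish the base case. Let $T_1\in C^1_{\beta,\alpha}(V,\mathfrak{g})$ be the prescribed $1$-cocycle and set $T_0=T$. To see that $T+tT_1$ is an order $1$ deformation I only need the $k=1$ instance of the defining system, namely $\{\!\!\{T_0,T_1\}\!\!\}+\{\!\!\{T_1,T_0\}\!\!\}=0$. Because elements of $C^1_{\beta,\alpha}(V,\mathfrak{g})$ are odd in the graded Lie algebra $(C^*_{\beta,\alpha}(V,\mathfrak{g}),\{\!\!\{-,-\}\!\!\})$, graded antisymmetry gives $\{\!\!\{T_0,T_1\}\!\!\}=\{\!\!\{T_1,T_0\}\!\!\}$, so this reduces to $\{\!\!\{T,T_1\}\!\!\}=0$; since $\delta_T(T_1)=-\{\!\!\{T,T_1\}\!\!\}$, this is exactly the hypothesis that $T_1$ is a $1$-cocycle.

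Next I would run the induction. Suppose an order $n$ deformation $T_t=T+\sum_{i=1}^n t^i T_i$ with $T_1$ its infinitesimal has been produced. Form the obstruction $2$-cochain $\Theta_T$ of equation \eqref{Obst}; as recorded right after its definition, $\Theta_T$ is a $2$-cocycle by the graded Jacobi identity for $\{\!\!\{-,-\}\!\!\}$, hence determines a class in $H^2_{\beta,\alpha}(V,\mathfrak{g})$. The hypothesis $H^2_{\beta,\alpha}(V,\mathfrak{g})=0$ forces this class to vanish, so by Theorem \ref{hom-Obst} there is a $1$-cochain $T_{n+1}\in C^1_{\beta,\alpha}(V,\mathfrak{g})$ extending $T_t$ to an order $n+1$ deformation. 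Iterating over all $n$ yields $T_t=T+tT_1+\sum_{i\geq 2}t^iT_i$, a formal deformation of $T$ whose infinitesimal is the given $T_1$.

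There is essentially no hard analytic step here, since all of the nontrivial content is packaged in Theorem \ref{hom-Obst}; the only points requiring care are the sign and parity bookkeeping in the base case and the observation that the obstruction lives in $H^2$ (which is unaffected by whether or not one adjoins the space of $0$-cochains), so that the hypothesis $H^2_{\beta,\alpha}(V,\mathfrak{g})=0$ applies verbatim.
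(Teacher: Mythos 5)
Your proposal is correct and is precisely the argument the paper intends: the corollary is stated as an immediate consequence of Theorem \ref{hom-Obst}, obtained by the order-by-order induction you spell out, with the base case being that the $k=1$ deformation equation is exactly the cocycle condition $\{\!\!\{T,T_1\}\!\!\}=0$. Your side remarks (the parity bookkeeping giving $\{\!\!\{T_0,T_1\}\!\!\}=\{\!\!\{T_1,T_0\}\!\!\}$, and the observation that adjoining $0$-cochains does not change $H^2$) are both accurate and harmless additions.
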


\section{Applications}
In this section, we describe deformations of $s$-Rota-Baxter operators (of weight $0$) and skew-symmetric $r$-matrices on hom-Lie algebras as particular cases of $\mathcal{O}$-operators on hom-Lie algebras. 
\subsection{Rota-Baxter Operators on hom-Lie algebras}
In this subsection, we consider Rota Baxter operators of weight $0$. Let us recall the definition of $s$-Rota-Baxter operator on a hom-Lie algebra from definition \ref{def:Rota-Baxter operators}. In particular, for any non-negative integer $s$, a linear operator $\mathcal{R}: \mathfrak{g} \rightarrow \mathfrak{g}$ is called an $s$-Rota-Baxter operator of weight $0$ on a hom-Lie algebra $(\mathfrak{g},[~,~],\alpha)$ if $\mathcal{R}\circ \alpha= \alpha\circ \mathcal{R}$ and the following identity is satisfied
\begin{equation*} 
\quad\quad\quad\quad\quad[\mathcal{R}(x), \mathcal{R}(y)]= \mathcal{R}([\alpha^s \mathcal{R}(x), y]+ [x, \alpha^s \mathcal{R}(y)],\quad\mbox{for all }x,y\in \mathfrak{g}.
\end{equation*}

\begin{proposition}\label{induced hom-Lie algebra}
Let $\mathcal{R}$ be an $s$-Rota-Baxter operator on a hom-Lie algebra $(\mathfrak{g}, [~,~], \alpha)$. Then $\mathcal{R}$ induces a hom-Lie algebra structure $(\mathfrak{g},[~,~]_\mathcal{R},\alpha)$, where the bracket $[~,~]_\mathcal{R}$ is given by 
$$[x, y]_\mathcal{R}= [\alpha^s \mathcal{R}x, y] + [x, \alpha^s \mathcal{R}y].$$ 
\end{proposition}
\begin{proof}
From Remark \ref{5.2} and Proposition \ref{induced hom-pre-Lie algebra}, one obtains the induced hom-pre-Lie algebra structure. Then the hom-Lie algebra $(\mathfrak{g},[~,~]_\mathcal{R},\alpha)$ is the sub-adjacent hom-Lie algebra to this induced hom-pre-Lie algebra.
\end{proof}

From remark \ref{5.2}, any $s$-Rota-Baxter operator on a hom-Lie algebra $(\mathfrak{g},[~,~],\alpha)$ is simply an $\mathcal{O}$-operator on $(\mathfrak{g}, [~,~], \alpha)$ with respect to the $\alpha^s$-adjoint representation. Consequently, the results developed in Section $3$ also hold for $s$-Rota-Baxter operators on hom-Lie algebras. More precisely, the $\alpha^s$-adjoint representation of the hom-Lie algebra $(\mathfrak{g},[~,~],\alpha)$ on itself induces a graded Lie algebra structure $\big(C^*_{\alpha}(\mathfrak{g},\mathfrak{g}),\{\!\!\{-,-\}\!\!\}\big)$, as described in Subsection 3.1, and we have the following result.
\begin{theorem}
A linear map $\mathcal{R}: \mathfrak{g} \rightarrow \mathfrak{g}$ is an $s$-Rota-Baxter operator on a hom-Lie algebra $(\mathfrak{g},[~,~],\alpha)$ if and only if $\mathcal{R}$ is a Maurer-Cartan element of the graded Lie algebra $\big(C^*_{\alpha}(\mathfrak{g},\mathfrak{g}),\{\!\!\{-,-\}\!\!\}\big)$. Thus, an $s$-Rota-Baxter operator $\mathcal{R}$ on $(\mathfrak{g},[~,~],\alpha)$ induces a differential graded Lie algebra structure $\big(C^*_{\alpha}(\mathfrak{g},\mathfrak{g}),\{\!\!\{~,~\}\!\!\},\delta_\mathcal{R}:=\{\!\!\{\mathcal{R},~\}\!\!\}\big)$.  
\end{theorem}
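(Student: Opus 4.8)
The plan is to derive this theorem as a direct specialization of Theorem \ref{Maurer Cartan element} to the $\alpha^s$-adjoint representation, so that essentially no computation beyond a single substitution is required.

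First I would invoke Remark \ref{5.2}: an $s$-Rota-Baxter operator $\mathcal{R}$ of weight $0$ on $(\mathfrak{g},[~,~],\alpha)$ is precisely an $\mathcal{O}$-operator on $(\mathfrak{g},[~,~],\alpha)$ with respect to the representation $(\mathfrak{g},\alpha,\mathsf{ad}^s)$. In this situation the representation space $V$ is $\mathfrak{g}$ itself, the twisting map $\beta$ is $\alpha$, and the action is $\{x,y\}=\mathsf{ad}^s_x(y)=[\alpha^s(x),y]$. Consequently the graded vector space $C^*_{\beta,\alpha}(V,\mathfrak{g})$ collapses to $C^*_{\alpha,\alpha}(\mathfrak{g},\mathfrak{g})=C^*_{\alpha}(\mathfrak{g},\mathfrak{g})$, since for $\beta=\alpha$ the defining condition $\alpha\circ P = P\circ\alpha^{\otimes n}$ is exactly membership in $C^n_{\alpha}(\mathfrak{g},\mathfrak{g})$. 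Moreover, the auxiliary condition $\mathcal{R}\circ\alpha=\alpha\circ\mathcal{R}$ from Definition \ref{def:Rota-Baxter operators} is precisely the requirement $\mathcal{R}\in C^1_{\alpha}(\mathfrak{g},\mathfrak{g})$, matching the hypothesis $T\circ\beta=\alpha\circ T$ of Theorem \ref{Maurer Cartan element}.

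With these identifications in place I would apply Theorem \ref{Maurer Cartan element} verbatim with $T=\mathcal{R}$, $V=\mathfrak{g}$, $\beta=\alpha$ and $\rho=\mathsf{ad}^s$. The theorem then asserts that $\mathcal{R}$ is an $\mathcal{O}$-operator with respect to $(\mathfrak{g},\alpha,\mathsf{ad}^s)$, equivalently (by Remark \ref{5.2}) an $s$-Rota-Baxter operator of weight $0$, if and only if $\{\!\!\{\mathcal{R},\mathcal{R}\}\!\!\}=0$, i.e.\ $\mathcal{R}$ is a Maurer-Cartan element of $\big(C^*_{\alpha}(\mathfrak{g},\mathfrak{g}),\{\!\!\{-,-\}\!\!\}\big)$. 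To make the specialization transparent I would substitute $\{x,y\}=[\alpha^s(x),y]$ into the formula for $\{\!\!\{T,T\}\!\!\}(v_1,v_2)$ recorded just before Theorem \ref{Maurer Cartan element}, and check, using skew-symmetry of the bracket, that it reduces to $2\big([\mathcal{R}(x),\mathcal{R}(y)]-\mathcal{R}([\alpha^s\mathcal{R}(x),y]+[x,\alpha^s\mathcal{R}(y)])\big)$, whose vanishing is exactly the $s$-Rota-Baxter identity of weight $0$.

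Finally, the differential graded Lie algebra assertion is immediate from Remark \ref{dgla}: once $\mathcal{R}$ is a Maurer-Cartan element, the graded Jacobi identity together with $\{\!\!\{\mathcal{R},\mathcal{R}\}\!\!\}=0$ forces $\delta_{\mathcal{R}}:=\{\!\!\{\mathcal{R},-\}\!\!\}$ to square to zero, so that $\big(C^*_{\alpha}(\mathfrak{g},\mathfrak{g}),\{\!\!\{-,-\}\!\!\},\delta_{\mathcal{R}}\big)$ is a differential graded Lie algebra. Since every step is a direct appeal to a previously established result, the only genuinely computational point, and hence the main (though entirely routine) obstacle, is the bracket substitution in the preceding paragraph; all the remaining identifications are formal.
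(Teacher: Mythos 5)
Your proof is correct and takes essentially the same route as the paper, which likewise obtains this theorem by combining Remark \ref{5.2} (an $s$-Rota-Baxter operator of weight $0$ is an $\mathcal{O}$-operator with respect to the $\alpha^s$-adjoint representation) with Theorem \ref{Maurer Cartan element}, and Remark \ref{dgla} for the differential graded Lie algebra claim, offering no further proof. Your explicit substitution of $\{x,y\}=[\alpha^s(x),y]$ into the formula for $\{\!\!\{T,T\}\!\!\}$ merely spells out a reduction the paper leaves implicit.
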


Let us observe that for a linear map $\mathcal{R}^\prime:\mathfrak{g}\rightarrow \mathfrak{g}$, the sum $\mathcal{R}+\mathcal{R}^\prime$ is an $s$-Rota-Baxter operator on $(\mathfrak{g},[~,~],\alpha)$ if and only if 
$$\delta_\mathcal{R}(\mathcal{R}^{\prime})+\frac{1}{2}\{\!\!\{\mathcal{R}^\prime,\mathcal{R}^\prime\}\!\!\}=0,$$
i.e., $\mathcal{R}^\prime$ is a Maurer-Cartan element of the differential graded Lie algebra $\big(C^*_{\alpha}(\mathfrak{g},\mathfrak{g}),\{\!\!\{~,~\}\!\!\},\delta_\mathcal{R}\big)$.

If the hom-Lie algebra $(\mathfrak{g},[~,~],\alpha)$ is regular, then one may describe linear and formal deformations of $s$-Rota-Baxter operators on $(\mathfrak{g},[~,~],\alpha)$, for any integer $s$. This description follows from the deformation theory for $\mathcal{O}$-operators in Section $4$. In particular, let us write down some definitions and results on linear deformations of $s$-Rota-Baxter operators.

\begin{definition} Let $\mathcal{R}$ be an $s$-Rota–Baxter operator on a regular hom-Lie algebra $(\mathfrak{g}, [~,~], \alpha)$.\\
(i) Let $R: \mathfrak{g}\longrightarrow \mathfrak{g}$ be a linear operator. If a $t$-parametrized family $\mathcal{R}_t := \mathcal{R} + t R$ is an $s$-Rota–Baxter operator on $(\mathfrak{g}, [~,~], \alpha),$ for all $t \in K$, then we say that $R$ generates a linear deformation of $\mathcal{R}$.\\
(ii) Let $\mathcal{R}^1_t= \mathcal{R} + t R_1$ and $\mathcal{R}^2_t:= \mathcal{R} + t R_2$ be two linear deformations of $\mathcal{R}$ generated
by $R_1$ and $R_2$ respectively. They are said to be equivalent if there exists an $x \in \mathfrak{g}$
such that $\alpha(x)=x$ and the pair $(\mathsf{Id}_\mathfrak{g} + t \mathsf{ad}^\dagger_x , \mathsf{Id}_\mathfrak{g} + t\rho_s(x)^\dagger)$ is a homomorphism from $\mathcal{R}^2_t$ to $\mathcal{R}^1_t$, where $$\mathsf{ad}^{\dagger}_x(y):=\alpha^{-1}[x, y]~~ \mbox{and  }\rho_s(x)^{\dagger}(y):=\alpha^{-1}[\alpha^s x, y]~~\mbox{for } x, y\in \mathfrak{g}.$$ 
%(iii) A linear deformation $\mathcal{R}_t = \mathcal{R} + tR$ of $\mathcal{R}$ is said to be trivial if there exists an $x \in \mathfrak{g}$ such that $(\mathsf{Id}_\mathfrak{g} + t \mathsf{ad}^\dagger_x , \mathsf{Id}_\mathfrak{g} + t\rho_s(x)^\dagger)$ is a homomorphism from $\mathcal{R}_t$ to $\mathcal{R}$, where $ad^\dagger_x$ and $\rho_s(x)^\dagger$ are defined as above.
\end{definition}

Let us consider the hom-Lie algebra $(\mathfrak{g},[-,-]_\mathcal{R},\alpha)$ given by proposition \ref{induced hom-Lie algebra}. From Proposition \ref{rep associated to O-operator}, we obtain a representation $\rho_\mathcal{R}:\mathfrak{g} \rightarrow \mathsf{End}(\mathfrak{g})$ given by
$$\rho_\mathcal{R}(x)(y):=[\mathcal{R}x,y]+\mathcal{R}[\alpha^s(y),x],\quad \mbox{for all  }x,y \in\mathfrak{g}.$$
The triplet $(\mathfrak{g},\alpha,\rho_\mathcal{R})$ is a representation of the hom-Lie algebra $(\mathfrak{g},[~,~]_\mathcal{R},\alpha)$. Then, from Subsection $3.3$, we obtain the extended cochain complex $\big(\widetilde{C}^*_{\alpha,\alpha}(\mathfrak{g},\mathfrak{g}),\delta_{\alpha,\alpha}\big)$ (with $0$-cochains) for the hom-Lie algebra $(\mathfrak{g},[~,~]_\mathcal{R},\alpha)$ with coefficients in the representation $(\mathfrak{g},\alpha,\rho_\mathcal{R})$. This complex serves as deformation complex for an $s$-Rota-Baxter operator on a regular hom-Lie algebra $(\mathfrak{g},[~,~],\alpha)$. Moreover, the following proposition holds.

\begin{proposition} Let $\mathcal{R}$ be an $s$-Rota–Baxter operator on a hom-Lie algebra $\mathfrak{g}$. If
$R$ generates a linear deformation of $\mathcal{R}$, then $R$ is a $1$-cocycle. Moreover, if two linear
deformations of $\mathcal{R}$ generated by $R_1$ and $R_2$ are equivalent, then $R_1$ and $R_2$ determine the
same cohomology class.
\end{proposition}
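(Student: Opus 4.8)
The plan is to deduce both assertions from the general deformation theory of $\mathcal{O}$-operators developed in Section 4, via the identification recorded in Remark \ref{5.2}: an $s$-Rota--Baxter operator $\mathcal{R}$ on $(\mathfrak{g},[~,~],\alpha)$ is exactly an $\mathcal{O}$-operator on $(\mathfrak{g},[~,~],\alpha)$ with respect to the $\alpha^s$-adjoint representation $(\mathfrak{g},\alpha,\mathsf{ad}^s)$, where $\{x,y\}:=\mathsf{ad}^s_x(y)=[\alpha^s(x),y]$. Under this dictionary the induced sub-adjacent hom-Lie algebra is $(\mathfrak{g},[~,~]_\mathcal{R},\alpha)$ of Proposition \ref{induced hom-Lie algebra}, and the representation $\rho_T$ of Proposition \ref{rep associated to O-operator} specializes to $\rho_\mathcal{R}$; hence the deformation complex and its differential $\delta_\mathcal{R}$ are precisely the specializations of $(\widetilde C^*_{\beta,\alpha}(V,\mathfrak{g}),\delta_T)$ at $V=\mathfrak{g}$, $\beta=\alpha$, $T=\mathcal{R}$. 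Once this is in place, no fresh computation is needed and both statements become instances of results already proved in Section 4.

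For the first assertion, I would observe that if $R$ generates a linear deformation $\mathcal{R}_t=\mathcal{R}+tR$, then $\mathcal{R}_t$ is in particular an $\mathcal{O}$-operator for every $t$, so $R$ generates a linear deformation of the associated $\mathcal{O}$-operator. Comparing the coefficients of $t^1$ in the $\mathcal{O}$-operator identity yields the cocycle condition \eqref{cocycle condition for linear def}, which, as noted in the paragraph following \eqref{O-operator condition for generator}, is equivalent to $\delta_T(R)=0$. Translating back through the dictionary above, this reads $\delta_\mathcal{R}(R)=0$, i.e. $R$ is a $1$-cocycle of $(\widetilde C^*_{\alpha,\alpha}(\mathfrak{g},\mathfrak{g}),\delta_{\alpha,\alpha})$; equivalently, this is the linear specialization of Proposition \ref{infinitesimal}.

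For the second assertion, suppose $\mathcal{R}^1_t=\mathcal{R}+tR_1$ and $\mathcal{R}^2_t=\mathcal{R}+tR_2$ are equivalent via the pair $(\mathsf{Id}_\mathfrak{g}+t\,\mathsf{ad}^\dagger_x,\ \mathsf{Id}_\mathfrak{g}+t\,\rho_s(x)^\dagger)$ with $\alpha(x)=x$. Since $\rho_s(x)^\dagger(y)=\alpha^{-1}[\alpha^s x,y]$ is exactly $\rho(x)^\dagger$ for the representation $\{x,y\}=[\alpha^s x,y]$, this is precisely an equivalence of the corresponding $\mathcal{O}$-operator linear deformations. The computation leading to \eqref{equ-linear:cond2} then applies and gives $R_1-R_2=\delta_T(x)=\delta_\mathcal{R}(x)$, so $R_1$ and $R_2$ differ by a coboundary and hence determine the same class in $\widetilde H^1_{\alpha,\alpha}(\mathfrak{g},\mathfrak{g})$.

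The only genuine bookkeeping, and the step I would check most carefully, is that this specialization is faithful: namely that with $\{x,y\}=[\alpha^s(x),y]$ the formula $\rho_T(v)(x)=[Tv,x]+T\{x,v\}$ really reproduces $\rho_\mathcal{R}(x)(y)=[\mathcal{R}x,y]+\mathcal{R}[\alpha^s(y),x]$, and that the homomorphism conditions of Definition \ref{morphism of O-operators} collapse to the two maps $\mathsf{ad}^\dagger_x$ and $\rho_s(x)^\dagger$ appearing in the definition of equivalence (the directional convention from $\mathcal{R}^2_t$ to $\mathcal{R}^1_t$ only affects an overall sign of $x$, which is immaterial for the cohomology class). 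Everything else is a direct transcription of the $\mathcal{O}$-operator arguments, so I expect no essential difficulty beyond confirming this identification.
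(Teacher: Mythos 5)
Your proposal is correct and matches the paper's (implicit) argument exactly: the paper offers no separate proof, stating only that the results follow by specializing the Section 4 deformation theory of $\mathcal{O}$-operators to the $\alpha^s$-adjoint representation, which is precisely the dictionary you set up and verify. The cocycle condition comes from \eqref{cocycle condition for linear def} and the coboundary relation from \eqref{equ-linear:cond2}, just as you say.
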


\begin{definition}
Let $(\mathfrak{g},[~,~],\alpha)$ be a regular hom-Lie algebra and $\mathcal{R}$ be an $s$-Rota-Baxter operator on it. 
Then $x\in \mathfrak{g}$ is called a Nijenhuis
element associated to $s$-Rota-Baxter operator $\mathcal{R}$ if
\begin{align*}
\alpha(x)&=x,\\
[x, [Ry, x] + R[x, y]] &= 0, \quad\mbox{for all  } y \in \mathfrak{g} \\
[[x, y], [x, z]]& = 0,\quad\mbox{for all  }  y, z \in \mathfrak{g}.
\end{align*}
Let us denote the set of all Nijenhuis elements associated to $\mathcal{R}$ by $\mathsf{Nij}(\mathcal{R})$.
\end{definition}

Thus, it is easy to see that the Theorem \ref{trivial deformations} leads to the following result.
\begin{proposition}
 Let $\mathcal{R}$ be an $s$-Rota–Baxter operator on a regular hom-Lie algebra $(\mathfrak{g},[~,~],\alpha)$. If
$R$ generates a trivial linear deformation of $\mathcal{R}$, then it induces a Nijenhuis element. Conversely, for any Nijenhuis element $x\in \mathsf{Nij}(\mathcal{R})$, the linear sum $\mathcal{R}_t = \mathcal{R} + t R$ with $R= \delta_{\alpha}^s(x)$ is a
trivial linear deformation of $\mathcal{R}$. 
\end{proposition}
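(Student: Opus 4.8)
The plan is to reduce the entire statement to the general $\mathcal{O}$-operator deformation theory of Section~4 via the identification recorded in Remark~\ref{5.2}: an $s$-Rota-Baxter operator $\mathcal{R}$ on $(\mathfrak{g},[~,~],\alpha)$ is exactly an $\mathcal{O}$-operator $T=\mathcal{R}$ with $V=\mathfrak{g}$, $\beta=\alpha$ and $\rho=\mathsf{ad}^s$, so that $\{x,y\}=\rho(x)(y)=[\alpha^s(x),y]$. Under this dictionary a linear deformation $\mathcal{R}_t=\mathcal{R}+tR$ is precisely a linear deformation of the $\mathcal{O}$-operator $T$ generated by $\mathfrak{T}=R$, and the generator $\delta_\alpha^s(x)$ is nothing but $\delta_T(x)=\{\!\!\{T,x\}\!\!\}$ evaluated on the $0$-cochain $x$. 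I would first record this by specializing \eqref{diff_0:1}--\eqref{diff_0:2}: using $\mathcal{R}\circ\alpha=\alpha\circ\mathcal{R}$ and $\alpha(x)=x$ (hence $\alpha^s(x)=x$), one gets $\delta_\alpha^s(x)(y)=[\mathcal{R}(\alpha^{-1}(y)),x]+\mathcal{R}[x,\alpha^{-1}(y)]$.

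The crucial intermediate step is to show that the two notions of Nijenhuis element coincide, i.e.\ $\mathsf{Nij}(\mathcal{R})$ in the $s$-Rota-Baxter sense equals $\mathsf{Nij}(T)$ in the $\mathcal{O}$-operator sense. Specializing the $\mathcal{O}$-operator conditions, $\alpha(x)=x$ and \eqref{equ-linear:cond1} transcribe verbatim; using $\alpha^s(x)=x$ the last condition $[x,T\rho(x)(v)+[T(v),x]]=0$ becomes $[x,\mathcal{R}[x,y]+[\mathcal{R}y,x]]=0$; and condition \eqref{equ-linear:cond4}, which reads $[\alpha^s[x,y],[\alpha^s(x),v]]=0$, collapses --- by multiplicativity, invertibility of $\alpha$, and $\alpha^s(x)=x$ --- to $[[x,y'],[x,v]]=0$, hence is subsumed by \eqref{equ-linear:cond1}. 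Thus the three displayed conditions defining $\mathsf{Nij}(\mathcal{R})$ are exactly the four $\mathcal{O}$-operator Nijenhuis conditions, one of which is redundant here.

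With the dictionary in place, the converse assertion is an immediate instance of Theorem~\ref{trivial deformations}: for $x\in\mathsf{Nij}(\mathcal{R})$ the deformation generated by $\delta_\alpha^s(x)=\delta_T(x)$ is trivial. For the forward direction I would argue as follows: if $R$ generates a trivial linear deformation, then $\mathcal{R}_t=\mathcal{R}+tR$ is equivalent to $\mathcal{R}_0=\mathcal{R}$, so there is $x$ with $\alpha(x)=x$ and a homomorphism $(\mathsf{Id}_\mathfrak{g}+t\mathsf{ad}_x^\dagger,\mathsf{Id}_\mathfrak{g}+t\rho_s(x)^\dagger)$ from $\mathcal{R}_t$ to $\mathcal{R}$. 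Feeding this into the equivalence analysis that produced \eqref{equ-linear:cond1}--\eqref{equ-linear:cond4}, with $\mathfrak{T}_1=R$ and $\mathfrak{T}_2=0$, yields \eqref{equ-linear:cond1} directly; \eqref{equ-linear:cond2} identifies $R=\delta_T(x)=\delta_\alpha^s(x)$; and \eqref{equ-linear:cond3}, namely $[x,R(v)]=0$, becomes --- after substituting the formula for $R$ and replacing $\alpha^{-1}(v)$ by $y$ --- the condition $[x,[\mathcal{R}y,x]+\mathcal{R}[x,y]]=0$. Together these place $x$ in $\mathsf{Nij}(\mathcal{R})$, so $R$ induces a Nijenhuis element.

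The main obstacle will be the bookkeeping in the second paragraph, in particular verifying that \eqref{equ-linear:cond4} is automatically satisfied (equivalently, collapses onto \eqref{equ-linear:cond1}) in the $\alpha^s$-adjoint case; this is precisely what makes the three-condition definition of $\mathsf{Nij}(\mathcal{R})$ equivalent to the four-condition $\mathcal{O}$-operator definition, and everything else is a direct transcription of the established $\mathcal{O}$-operator results once the dictionary $T=\mathcal{R}$, $\rho=\mathsf{ad}^s$ is fixed.
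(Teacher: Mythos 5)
Your proposal is correct and follows exactly the route the paper intends: the paper offers no separate argument for this proposition, merely asserting that it follows from Theorem \ref{trivial deformations} once an $s$-Rota--Baxter operator is identified with an $\mathcal{O}$-operator for the $\alpha^s$-adjoint representation, which is the reduction you carry out. Your explicit check that the four $\mathcal{O}$-operator Nijenhuis conditions collapse to the three conditions defining $\mathsf{Nij}(\mathcal{R})$ --- with \eqref{equ-linear:cond4} subsumed by \eqref{equ-linear:cond1} because $\alpha^s(x)=x$ and $\alpha$ is invertible --- together with the forward-direction bookkeeping via \eqref{equ-linear:cond1}--\eqref{equ-linear:cond4}, supplies exactly the details the paper leaves implicit.
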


Note that one can also deduce results on formal deformations of $s$-Rota-Baxter operators (of weight $0$) on regular hom-Lie algebras following the results in Subsection $4.3$.

\subsection{Skew-symmetric r-matrices on regular hom-Lie algebras}
Let $(\mathfrak{g},[~,~],\alpha)$ be a regular hom-Lie algebra. Here, we need to take $\alpha$ invertible since we are going to use the coadjoint representation, defined in Example \ref{coadjoint rep}. It is known that $(\wedge^*\mathfrak{g},[~,~]_{g},\tilde\alpha)$ is a graded hom-Lie algebra, where the graded hom-Lie bracket is given by  
\begin{align*}
&[x_1\wedge\cdots\wedge x_n,y_1\wedge \cdots\wedge y_m]_{g}\\
=&\sum_{i=1}^n\sum_{j=1}^m (-1)^{i+j}[x_i,y_j]\wedge (\alpha(x_1)\wedge\cdots \widehat{\alpha(x_i)}\wedge\cdots\wedge \alpha(x_n)\wedge \alpha(y_1)\wedge \cdots \widehat{\alpha(y_j)}\wedge\cdots\wedge \alpha(y_m)),
\end{align*}
and the map $\tilde{\alpha}:\wedge^*\mathfrak{g}\rightarrow\wedge^*\mathfrak{g}$ is defined by 
$$\tilde{\alpha}(x_1\wedge x_2\wedge\cdots\wedge x_n)=\alpha(x_1)\wedge\alpha(x_2)\wedge\cdots\wedge \alpha(x_n),$$
for all $x_1,\cdots,x_n,y_1,\cdots,y_m\in \mathfrak{g}$.
Let us now consider a graded vector space $\mathcal{H}:=\oplus_{n\geq 1} \mathcal{H}^n,$ where 
$$\mathcal{H}^n:=\{\chi\in \wedge^n\mathfrak{g}~|~\tilde\alpha(\chi)=\chi\}.$$ 
If we restrict the bracket $[~,~]_g$ on $\mathcal{H}$, then we get a graded Lie algebra $(\mathcal{H},[~,~]_g)$.

A skew-symmetric $r$-matrix \cite{Yau1} on the hom-Lie algebra $(\mathfrak{g},[~,~],\alpha)$ is an element $r\in \mathcal{H}^2$, which satisfies the identity $[r,r]_g=0$.  In other words, $r$-matrices on hom-Lie algebra $(\mathfrak{g},[~,~],\alpha)$ are Maurer-Cartan elements of the associated graded Lie algebra $(\mathcal{H},[~,~]_g)$.

Any element $r\in \mathfrak{g}\otimes \mathfrak{g}$ corresponds to an operator $r^{\sharp}:\mathfrak{g}^*\rightarrow \mathfrak{g}$ and vice versa by the following expression
\begin{equation}\label{operator to r-matrix}
\langle\xi,r^{\sharp}(\eta)\rangle=\langle\xi\otimes \eta, r\rangle,\quad \mbox{for all }\xi,\eta\in \mathfrak{g}^*.
\end{equation}
The skew-symmetry of $r$ is equivalent to 
$$\langle\xi,r^{\sharp}(\eta)\rangle+\langle\eta,r^{\sharp}(\xi)\rangle=0.$$
Let $r\in \mathcal{H}^2$ and it is given by
\begin{equation}\label{def of r-matrix}
r=\sum_i(x_i\otimes y_i- y_i\otimes x_i),\quad\mbox{for some }x_i,y_i\in \mathfrak{g}.
\end{equation}
Then, the condition $[r,r]_{\mathfrak{g}}=0$ is equivalent to
$$[r^{12},r^{13}]+[r^{12},r^{23}]+[r^{13},r^{23}]=0,$$
where, 
\begin{align*}
[r^{12},r^{13}]&=\sum_i\sum_j[x_i,x_j]\otimes \tilde{y}_i\otimes\tilde{y}_j-[x_i,y_j]\otimes \tilde{y}_i\otimes\tilde{x}_j-[y_i,x_j]\otimes \tilde{x}_i\otimes\tilde{y}_j+[y_i,y_j]\otimes \tilde{x}_i\otimes\tilde{x}_j,\\
[r^{12},r^{13}]&=\sum_i\sum_j\tilde{x}_i\otimes[y_i,x_j]\otimes \tilde{y}_j-\tilde{x}_i\otimes[y_i,y_j]\otimes \tilde{x}_j-\tilde{y}_i\otimes[x_i,x_j]\otimes \tilde{y}_j+\tilde{y}_i\otimes[x_i,y_j]\otimes \tilde{x}_j,\\
[r^{13},r^{23}]&=\sum_i\sum_j\tilde{x}_i\otimes\tilde{x}_j\otimes [y_i,y_j]-\tilde{x}_i\otimes\tilde{y}_j\otimes [y_i,x_j]-\tilde{y}_i\otimes\tilde{x}_j\otimes[x_i,y_j] +\tilde{y}_i\otimes\tilde{y}_j\otimes [x_i,x_j],
\end{align*}
and $\tilde{x}:=\alpha(x)$. Moreover, the condition $\alpha^{\otimes 2}r=r$ implies that
\begin{align*}
\langle\xi,r^{\sharp}(\eta)\rangle=\langle\xi\otimes \eta, \alpha^{\otimes 2}r\rangle
=\langle\alpha^*(\xi)\otimes \alpha^*(\eta), r\rangle=\langle\alpha^*(\xi), r^{\sharp}(\alpha^*(\eta))\rangle=\langle\xi, \alpha\circ r^{\sharp}\circ\alpha^*(\eta)\rangle,
\end{align*}
for all $\xi,\eta\in \mathfrak{g}^*$. i.e., the operator $r^{\sharp}:\mathfrak{g}^*\rightarrow \mathfrak{g}$ satisfies 
\begin{equation}\label{cond1 for corresp}
\alpha\circ r^{\sharp}=r^{\sharp}\circ (\alpha^{-1})^*.
\end{equation} 

Next, we show that equation \eqref{operator to r-matrix} defines a bijective correspondence between $r$-matrices on a hom-Lie algebra $(\mathfrak{g},[~,~],\alpha)$  and  $\mathcal{O}$-operators on the hom-Lie algebra $(\mathfrak{g},[~,~],\alpha)$ with respect to the coadjoint representation $(\mathfrak{g}^*,(\alpha^{-1})^*,\rho^\star)$. We denote $\rho^\star(x)(\xi)$ simply by $\{x,\xi\}$, for all $x\in\mathfrak{g}$ and $\xi\in\mathfrak{g}^*$. 

For $\xi,\eta\in\mathfrak{g}^*,$ we get
\begin{align}\label{identity1}
&[r^{\sharp}(\alpha^*\xi),r^{\sharp}(\alpha^*\eta)]\\\nonumber
=&\sum_i\sum_j[\langle \alpha^*\xi,y_i\rangle x_i- \langle \alpha^*\xi,x_i\rangle y_i,\langle \alpha^*\eta,y_j\rangle x_j- \langle \alpha^*\eta,x_j\rangle y_j]\\\nonumber
=&\sum_{i}\sum_j \bigg(\langle \xi,\alpha(y_i)\rangle \langle \eta,\alpha(y_j)\rangle [x_i,x_j]-\langle \xi,\alpha(y_i)\rangle \langle \eta,\alpha(x_j)\rangle [x_i,y_j] \\\nonumber
&\quad\quad\quad\quad-\langle \xi,\alpha(x_i)\rangle \langle \eta,\alpha(y_j)\rangle [y_i,x_j]+\langle \xi,\alpha(x_i)\rangle \langle \eta,\alpha(x_j)\rangle [y_i,y_j]\bigg)\\\nonumber
=&-(\langle\xi,~\rangle\otimes\langle\eta,~\rangle\otimes \mathsf{Id}_\mathfrak{g})([r^{13},r^{23}]).
\end{align}
Note that by the equations \eqref{operator to r-matrix}, \eqref{def of r-matrix}, and the condition $\alpha^{\otimes 2}r=r$, we obtain
 $$r^{\sharp}(\eta)=\sum_i\langle \eta,y_i\rangle x_i- \langle \eta, x_i\rangle y_i=\sum_i\langle \eta,\alpha(y_i)\rangle \alpha(x_i)- \langle \eta, \alpha(x_i)\rangle \alpha(y_i).$$ 
Then, for all $\xi,\eta\in\mathfrak{g}^*$, we have the following identity    
\begin{align}\label{identity2}
&r^{\sharp}(\{r^{\sharp}(\alpha^*\xi),\alpha^*\eta\})\\\nonumber
=&r^{\sharp}\bigg(\sum_{i} \big(\langle \alpha^*\xi,y_i\rangle \{x_i,\alpha^*\eta\}- \langle \alpha^*\xi,x_i\rangle \{y_i,\alpha^*\eta\}\big)\bigg)\\\nonumber
%=&r^{\sharp}\bigg(\sum_{i} \big(\langle \xi,\alpha(y_i)\rangle \{x_i,\alpha^*\eta\}- \langle \xi,\alpha(x_i)\rangle \{y_i,\alpha^*\eta\}\big)\bigg)\\\nonumber
=&\sum_j\sum_i\bigg(\langle \xi,\alpha(y_i)\rangle \Big(\langle\{x_i,\alpha^*\eta\},\alpha(y_j)\rangle \alpha(x_j)-  \langle\{x_i,\alpha^*\eta\},\alpha(x_j)\rangle \alpha(y_j)\Big)\\\nonumber 
&\quad \quad \quad-\langle \xi,\alpha(x_i)\rangle \Big(\langle\{y_i,\alpha^*\eta\},\alpha(y_j)\rangle \alpha(x_j)-  \langle\{y_i,\alpha^*\eta\},\alpha(x_j)\rangle \alpha(y_j)\Big)\bigg)\\\nonumber
=&\sum_j\sum_i\bigg(\langle \xi,\alpha(y_i)\rangle \Big(\langle \eta,[x_i,y_j]\rangle \alpha(x_j)-  \langle\eta,[x_i,x_j]\rangle \alpha(y_j)\Big)\\\nonumber 
&\quad \quad \quad-\langle \xi,\alpha(x_i)\rangle \Big(\langle\eta,[y_i,y_j]\rangle \alpha(x_j)-  \langle\eta,[y_i,x_j]\rangle \alpha(y_j)\Big)\bigg)\\\nonumber
=&-(\langle\xi,~\rangle\otimes\langle\eta,~\rangle\otimes \mathsf{Id}_\mathfrak{g})([r^{12},r^{13}]).
\end{align}
Similarly,
\begin{align}\label{identity3}
r^{\sharp}(\{r^{\sharp}(\alpha^*\eta),\alpha^*\xi\})=(\langle\xi,~\rangle\otimes\langle\eta,~\rangle\otimes \mathsf{Id}_\mathfrak{g})([r^{12},r^{13}]).
\end{align}

Therefore, 
$$\big\langle \xi\otimes \eta\otimes \gamma,~[r,r]_{\mathfrak{g}}~\big\rangle= \Big\langle \gamma,~[r^{\sharp}(\alpha^*\xi),r^{\sharp}(\alpha^*\eta)]-r^{\sharp}(\{r^{\sharp}(\alpha^*\xi),\alpha^*\eta\}-\{r^{\sharp}(\alpha^*\eta),\alpha^*\xi\})\Big\rangle,$$
for all $\xi,\eta,\gamma\in\mathfrak{g}^*$. Hence, we have the following result.

\begin{theorem}\label{correspondence theorem}
Let $(\mathfrak{g},[~,~],\alpha)$ be a regular hom-Lie algebra. Then, an element $r\in \mathcal{H}^2$ is an $r$-matrix on the regular hom-Lie algebra $(\mathfrak{g},[~,~],\alpha)$ if and only if the associated operator $r^{\sharp}:\mathfrak{g}^*\rightarrow \mathfrak{g}$, defined by equation \eqref{operator to r-matrix}, is an $\mathcal{O}$-operator on hom-Lie algebra $(\mathfrak{g},[~,~],\alpha)$ with respect to the coadjoint representation $(\mathfrak{g}^*,(\alpha^{-1})^*,\rho^\star)$.
\end{theorem}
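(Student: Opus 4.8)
The plan is to verify the two defining conditions of an $\mathcal{O}$-operator from Definition \ref{O-operator} for the map $r^{\sharp}$, with respect to the coadjoint representation $(\mathfrak{g}^*,(\alpha^{-1})^*,\rho^\star)$ of Example \ref{coadjoint rep}, and to read each condition off from the tensor identities already assembled above. Writing $T:=r^{\sharp}$ and $\{x,\xi\}:=\rho^\star(x)(\xi)$, the two statements to be matched are the commutation relation $T\circ(\alpha^{-1})^*=\alpha\circ T$ and the $\mathcal{O}$-operator identity
\[
[T\xi,T\eta]=T\big(\{T\xi,\eta\}-\{T\eta,\xi\}\big),\qquad \xi,\eta\in\mathfrak{g}^*.
\]
The whole argument is then an organized combination of the preceding computations together with non-degeneracy of the canonical pairing.

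For the commutation relation I would simply invoke \eqref{cond1 for corresp}: the hypothesis $r\in\mathcal{H}^2$ means precisely $\alpha^{\otimes 2}r=r$, and the short computation displayed there shows this is equivalent to $\alpha\circ r^{\sharp}=r^{\sharp}\circ(\alpha^{-1})^*$. Hence membership in $\mathcal{H}^2$ settles the first $\mathcal{O}$-operator condition automatically, independently of the Maurer-Cartan equation $[r,r]_{\mathfrak{g}}=0$.

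For the second condition I would combine the identities \eqref{identity1}, \eqref{identity2}, and \eqref{identity3}. Summing them gives the single pairing formula displayed just before the statement,
\[
\langle\xi\otimes\eta\otimes\gamma,[r,r]_{\mathfrak{g}}\rangle=\Big\langle\gamma,\ [r^{\sharp}(\alpha^*\xi),r^{\sharp}(\alpha^*\eta)]-r^{\sharp}\big(\{r^{\sharp}(\alpha^*\xi),\alpha^*\eta\}-\{r^{\sharp}(\alpha^*\eta),\alpha^*\xi\}\big)\Big\rangle,
\]
valid for all $\xi,\eta,\gamma\in\mathfrak{g}^*$. Because the evaluation pairing $\mathfrak{g}^*\otimes\mathfrak{g}\to k$ is non-degenerate, the right-hand side vanishes for every $\gamma$ exactly when the bracketed element of $\mathfrak{g}$ is zero, i.e. exactly when the $\mathcal{O}$-operator identity holds at the arguments $\alpha^*\xi,\alpha^*\eta$. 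Letting $\xi,\eta,\gamma$ range freely, the left-hand side vanishes identically precisely when $[r,r]_{\mathfrak{g}}=0$.

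The only step that requires the regularity hypothesis is the passage from the identity evaluated at arguments $\alpha^*\xi,\alpha^*\eta$ to the identity at arbitrary arguments of $\mathfrak{g}^*$: since $\alpha$ is an automorphism (as flagged before Example \ref{coadjoint rep}), the dual map $\alpha^*$ is a bijection of $\mathfrak{g}^*$, so the two formulations are equivalent. Assembling the two pieces yields the desired equivalence: an element $r\in\mathcal{H}^2$ satisfies $[r,r]_{\mathfrak{g}}=0$ if and only if $r^{\sharp}$ is an $\mathcal{O}$-operator for the coadjoint representation. I do not anticipate any genuine obstacle here; the substantive work lives in the three tensor identities already derived, and the remaining content is their bookkeeping combination and an application of non-degeneracy of the pairing.
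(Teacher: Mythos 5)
Your proposal is correct and follows essentially the same route as the paper: the paper derives equation \eqref{cond1 for corresp} and the identities \eqref{identity1}--\eqref{identity3}, assembles them into the displayed pairing formula for $\langle\xi\otimes\eta\otimes\gamma,[r,r]_{\mathfrak{g}}\rangle$, and then states the theorem as an immediate consequence. Your explicit remarks on non-degeneracy of the pairing and on the bijectivity of $\alpha^*$ (needed to pass from arguments $\alpha^*\xi,\alpha^*\eta$ to arbitrary ones) only make precise steps the paper leaves implicit.
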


\begin{remark}
There is another version of $r$-matrix defined in \cite{hom-Liebi}. Here, we followed the version defined in \cite{Yau1} to derive a connection between $\mathcal{O}$-operators and $r$-matrices on regular hom-Lie algebras (Theorem \ref{correspondence theorem}). Also, we refer to \cite{Yau1} for more details on hom-Yang-Baxter equations.
\end{remark}

If $r$ is an $r$-matrix on the regular hom-Lie algebra $(\mathfrak{g},[~,~],\alpha)$, then from Theorem \ref{correspondence theorem} $r^\sharp:\mathfrak{g}^*\rightarrow \mathfrak{g}$ is an $\mathcal{O}$-operator on hom-Lie algebra $(\mathfrak{g},[~,~],\alpha)$ with respect to the coadjoint representation $(\mathfrak{g}^*,(\alpha^{-1})^*,\rho^\star)$. Thus, from Proposition \ref{induced hom-pre-Lie algebra}, the triplet $(\mathfrak{g}^*,[~,~]_r,(\alpha^{-1})^*)$ is the induced sub-adjacent hom-Lie algebra with the bracket 
$$[\xi,\eta]_{r}:=\mathsf{ad}^*_{r^{\sharp}(\xi)}\eta-\mathsf{ad}^*_{r^{\sharp}(\eta)}(\xi)=\{r^{\sharp}(\eta),\xi\}-\{r^{\sharp}(\xi),\eta\},\quad\mbox{for all }\xi,\eta\in\mathfrak{g}^*.$$

Moreover, from Proposition \ref{rep associated to O-operator}, it follows that $(\mathfrak{g},\alpha,\rho_{r})$ is a representation of the hom-Lie algebra $(\mathfrak{g}^*,[~,~]_r,(\alpha^{-1})^*)$, where the map $\rho_{r}:\mathfrak{g}^*\rightarrow\mathsf{End}(\mathfrak{g})$ is given by 
$$\rho_{r}(\xi)(x)=[r^{\sharp}(\xi),x]+r^{\sharp} \{x,\xi\}\quad\mbox{for all }\xi\in \mathfrak{g}^*,~x\in \mathfrak{g}.$$
It is also clear that the induced representation $\rho_r:\mathfrak{g}^*\rightarrow \mathsf{End}(\mathfrak{g})$ is the same as the coadjoint representation $\mathsf{ad}^*:\mathfrak{g}^*\rightarrow \mathsf{End}(\mathfrak{g})$ of the hom-Lie algebra $(\mathfrak{g}^*,[~,~]_r,(\alpha^{-1})^*)$ on the pair $(\mathfrak{g},\alpha)$. In fact,  
\begin{align*}
\langle\eta, \rho_r(\xi)(x)\rangle&=\langle\eta, r^{\sharp} \{x,\xi\}+[r^{\sharp}(\xi),x]\rangle\\&=-\langle \{x,\xi\},r^{\sharp}(\eta)\rangle - \langle\eta, [x,r^{\sharp}(\xi)]\rangle\\
&=-\langle\{x,\xi\},\alpha(r^{\sharp}(\alpha^*\eta))\rangle- \langle\eta,[x,r^{\sharp}(\xi)]\rangle\\
&=\big\langle\xi,~\alpha^{-1}[x,r^{\sharp}(\alpha^*\eta)]~\big\rangle- \langle\eta, [x,r^{\sharp}(\xi)]\rangle\\
&=\langle \{r^{\sharp}(\alpha^*\eta),\xi\}-\{r^{\sharp}(\xi),\alpha^*\eta\},\alpha(x)\rangle\\
&=\langle [(\alpha^*)^2(\eta),\alpha^*\xi]_r,x\rangle\\
&=\langle \eta,\mathsf{ad}^*_{\xi}(x)\rangle, \quad \quad \mbox{for all } x\in \mathfrak{g},~\xi,\eta\in \mathfrak{g}^*.
\end{align*}

In \cite{Sheng3}, the authors define the notion of a weak morphism of $r$-matrices. Now, we extend this notion to $r$-matrices on hom-Lie algebras.      

\begin{definition}
Let $r_1,r_2$ be skew-symmetric $r$-matrices on a hom-Lie algebra $(\mathfrak{g},[~,~],\alpha)$. A pair $(\phi,\psi)$, consisting of a hom-Lie algebra homomorphism $\phi:\mathfrak{g}\rightarrow \mathfrak{g}$ and a linear map $\psi:\mathfrak{g}\rightarrow \mathfrak{g}$, is said to be a weak homomorphism from $r_1$ to $r_2$ if the following conditions hold
\begin{enumerate}[label=\alph*)]
\item $\psi\circ \alpha=\alpha\circ \psi,$
\item $(\psi\otimes \mathsf{Id}_{\mathfrak{g}})(r_1)=(\mathsf{Id}_{\mathfrak{g}}\otimes \phi)(r_2),$
\item $\psi([\phi(x),y])=[x,\psi(y)],~~~\mbox{for all }x,y\in \mathfrak{g}$.
\end{enumerate} 
A weak homomorphism $(\phi,\psi)$ is called a `weak isomorphism' if $\phi$ and $\psi$ are linear automorphisms on $\mathfrak{g}$. 
\end{definition}
The following result establishes the relationship between weak homomorphisms between $r$-matrices on hom-Lie algebras and homomorphisms between the corresponding $\mathcal{O}$-operators on hom-Lie algebras with respect to the coadjoint representation.

\begin{proposition}
Let $r_1,r_2$ be skew-symmetric $r$-matrices on a hom-Lie algebra $(\mathfrak{g},[~,~],\alpha)$. A pair $(\phi,\psi)$ is a weak homomorphism from $r_1$ to $r_2$ if and only if $(\phi,\psi^*)$ is a homomorphism of $\mathcal{O}$-operators from $r_1^\sharp$ to $r_2^\sharp$.
\end{proposition}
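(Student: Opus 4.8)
The plan is to use the nondegenerate pairing between $\mathfrak{g}$ and $\mathfrak{g}^*$ together with the correspondence $r\leftrightarrow r^\sharp$ of equation \eqref{operator to r-matrix} to translate, one condition at a time, the three requirements defining a weak homomorphism into the three conditions of a homomorphism of $\mathcal{O}$-operators from Definition \ref{morphism of O-operators}. Since $\phi$ is required to be a hom-Lie algebra homomorphism in both definitions, that part is shared and needs no argument; it remains to match conditions (a), (b), (c) with \eqref{morphism:cond2}, \eqref{morphism:cond1}, \eqref{morphism:cond3}, keeping in mind that for the coadjoint representation of Example \ref{coadjoint rep} one has $\beta=(\alpha^{-1})^*$, the representation is $\rho=\rho^\star=\mathsf{ad}^\star$, and the linear map on the module $\mathfrak{g}^*$ is $\psi^*$.

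First I would dispose of condition (a). Dualizing $\psi\circ\alpha=\alpha\circ\psi$ gives $\alpha^*\circ\psi^*=\psi^*\circ\alpha^*$, and since $\alpha$ is invertible this is equivalent to $\psi^*\circ(\alpha^{-1})^*=(\alpha^{-1})^*\circ\psi^*$, which is precisely \eqref{morphism:cond2} with $\beta=(\alpha^{-1})^*$. Next, to match (b) with \eqref{morphism:cond1}, I would compute the operators attached by \eqref{operator to r-matrix} to each side of (b): pairing against $\xi\otimes\eta$ shows that $(\psi\otimes\mathsf{Id}_{\mathfrak{g}})(r_1)$ corresponds to $\psi\circ r_1^\sharp$ and $(\mathsf{Id}_{\mathfrak{g}}\otimes\phi)(r_2)$ to $r_2^\sharp\circ\phi^*$, so (b) becomes the operator identity $\psi\circ r_1^\sharp=r_2^\sharp\circ\phi^*$. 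The skew-symmetry of $r_1$ and $r_2$ is equivalent to the skew-adjointness $(r_i^\sharp)^*=-r_i^\sharp$, and taking adjoints of this operator identity and using skew-adjointness is exactly what produces the intertwining relation between $r_1^\sharp$ and $r_2^\sharp$ demanded by \eqref{morphism:cond1} for $\phi_{\mathfrak{g}}=\phi$, $\phi_V=\psi^*$. Thus skew-symmetry is the crucial ingredient at this step.

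The substantive step is matching (c) with \eqref{morphism:cond3}. Here I would pair the identity $\psi^*\big(\rho^\star(x)(\xi)\big)=\rho^\star(\phi(x))\big(\psi^*(\xi)\big)$ against an arbitrary $y\in\mathfrak{g}$, move $\psi$ across the pairing, and unwind the definition $\langle\mathsf{ad}^\star(x)(\xi),y\rangle=\langle\xi,[\alpha^{-1}(x),\alpha^{-2}(y)]\rangle$ of the coadjoint action. This reduces \eqref{morphism:cond3} to the identity $[\alpha^{-1}(x),\alpha^{-2}(\psi(y))]=\psi\big([\alpha^{-1}(\phi(x)),\alpha^{-2}(y)]\big)$ for all $x,y\in\mathfrak{g}$. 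Using (a), so that $\psi$ commutes with every power of $\alpha$, and the fact that the hom-Lie homomorphism $\phi$ satisfies $\phi\circ\alpha=\alpha\circ\phi$, I would then set $x'=\alpha^{-1}(x)$ and $y''=\alpha^{-2}(y)$, which range over all of $\mathfrak{g}$ since $\alpha$ is an automorphism, and obtain $[x',\psi(y'')]=\psi\big([\phi(x'),y'']\big)$, i.e. condition (c).

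I expect the $\alpha$-power bookkeeping in this last step to be the main obstacle: the coadjoint action carries the twists $\alpha^{-1}$ and $\alpha^{-2}$, so one must use invertibility of $\alpha$ together with (a) and $\phi\circ\alpha=\alpha\circ\phi$ to absorb these twists cleanly, and one must check that the substitutions are bijective (they are, as $\alpha$ is an automorphism). Since each of the four equivalences established above is reversible, reading them in both directions yields the stated `if and only if'.
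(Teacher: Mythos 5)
Your overall strategy---translating each of the three conditions through the pairing $\langle\xi,r^{\sharp}(\eta)\rangle=\langle\xi\otimes\eta,r\rangle$---is the same as the paper's, and two of the three translations are carried out correctly: condition (a) dualizes to \eqref{morphism:cond2} exactly as you say, and your reduction of \eqref{morphism:cond3} to condition (c), by pairing against $y$, unwinding $\langle\mathsf{ad}^{\star}(x)(\xi),y\rangle=\langle\xi,[\alpha^{-1}(x),\alpha^{-2}(y)]\rangle$, and absorbing the twists via $\psi\circ\alpha=\alpha\circ\psi$, $\phi\circ\alpha=\alpha\circ\phi$ and the invertibility of $\alpha$, is precisely the computation the paper performs.

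The gap is in your treatment of condition (b). Your translation of the literal condition $(\psi\otimes\mathsf{Id}_{\mathfrak{g}})(r_1)=(\mathsf{Id}_{\mathfrak{g}}\otimes\phi)(r_2)$ into the operator identity $\psi\circ r_1^{\sharp}=r_2^{\sharp}\circ\phi^{*}$ is correct, but the adjoint step does not land where you claim. Using skew-adjointness, $\langle\xi,\psi\, r_1^{\sharp}\eta\rangle=\langle\psi^{*}\xi,r_1^{\sharp}\eta\rangle=-\langle\eta,r_1^{\sharp}\psi^{*}\xi\rangle$ and $\langle\xi,r_2^{\sharp}\phi^{*}\eta\rangle=-\langle\phi^{*}\eta,r_2^{\sharp}\xi\rangle=-\langle\eta,\phi\, r_2^{\sharp}\xi\rangle$, so the identity you obtain is $r_1^{\sharp}\circ\psi^{*}=\phi\circ r_2^{\sharp}$ --- the intertwining relation \eqref{morphism:cond1} for a homomorphism from $r_2^{\sharp}$ to $r_1^{\sharp}$, with the two operators in the opposite roles from what the Proposition asserts. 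To arrive at $r_2^{\sharp}\circ\psi^{*}=\phi\circ r_1^{\sharp}$ one needs condition (b) in the transposed form $(\mathsf{Id}_{\mathfrak{g}}\otimes\phi)(r_1)=(\psi\otimes\mathsf{Id}_{\mathfrak{g}})(r_2)$ (equivalently, by skew-symmetry of $r_1$ and $r_2$, $(\phi\otimes\mathsf{Id}_{\mathfrak{g}})(r_1)=(\mathsf{Id}_{\mathfrak{g}}\otimes\psi)(r_2)$), which is what the paper's own proof silently computes with; the Definition and the Proposition in the source are not mutually consistent as printed. You could not have resolved that editorial discrepancy blind, but your write-up asserts that the adjoint manipulation ``is exactly what produces'' \eqref{morphism:cond1}, which is false for the condition as you set it up; an actual execution of the computation would have exposed the mismatch, and the argument as written does not prove the stated equivalence.
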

\begin{proof} 
First, let us observe that any skew-symmetric $r$-matrix 
$r\in \mathcal{H}^2$, given by $$r=\sum_{i}(a_i\otimes b_i- b_i\otimes a_i)\quad\mbox{for some } a_i, b_i\in \mathfrak{g},$$ can also be written as a sum $r=\sum_{j}(a_j^\prime\otimes b_j^\prime)$ by changing the indexing. Let $r_1=\sum_i x_i\otimes y_i$ and $r_2=\sum_i x^{\prime}_i\otimes y^{\prime}_i$ be skew-symmetric $r$-matrices. The associated $\mathcal{O}$-operators are given by 
$${r_1}^\sharp=\sum\limits_{i}\langle\xi,x_i\rangle y_i \quad \mbox{and}\quad r_2^\sharp=\sum\limits_{j}\langle\xi,x^{\prime}_j\rangle y^{\prime}_j, \quad\mbox{for }\xi\in \mathfrak{g}^*.$$  

Let the pair $(\phi,\psi)$ be a weak homomorphism from $r_1$ to $r_2$. Then, the map $\phi:\mathfrak{g}\rightarrow \mathfrak{g}$ is a hom-Lie algebra homomorphism, $\psi\circ\alpha=\alpha\circ \psi$, and the following conditions are satisfied
\begin{equation}\label{hom:con1}
(\psi\otimes \mathsf{Id}_{\mathfrak{g}})(r_1)=(\mathsf{Id}_{\mathfrak{g}}\otimes \psi)(r_2), 
\end{equation}
\begin{equation}\label{hom:con2}
\psi([\phi(x),y])=[x,\psi(y)],\quad\mbox{for all }x,y\in \mathfrak{g}.
\end{equation}
Let us note that $\psi\circ\alpha=\alpha\circ \psi$ if and only if $\psi^*\circ(\alpha^{-1})^*=(\alpha^{-1})^*\circ \psi^*$. Next, we show that the conditions \eqref{hom:con1} and \eqref{hom:con2} hold if and only if 
the following conditions \eqref{hom:con3} and \eqref{hom:con4} hold true, respectively.
\begin{equation}\label{hom:con3}
r_2^\sharp\circ \psi^*=\phi\circ r_1^\sharp,
\end{equation}
\begin{equation}\label{hom:con4}
\psi^*(\{x,\xi\})=\{\phi(x),\psi^*(\xi)\}, ~~~\mbox{for all }x,y\in \mathfrak{g}.
\end{equation}

In order to show the desired result, let us consider the following expressions 
\begin{equation*}\label{condition-5.9}
\langle\xi\otimes\eta,(\mathsf{Id}_{\mathfrak{g}}\otimes \phi)(r_1)\rangle=\sum\limits_{i}\langle \xi,x_i\rangle \langle \eta,\phi(y_i)\rangle =\langle\eta,\phi(\sum\limits_{i}\langle\xi, x_i \rangle y_i\rangle=\langle\eta,\phi r_1^\sharp(\xi)\rangle
\end{equation*}
and
\begin{equation*}\label{condition-5.10}
\langle\xi\otimes \eta,(\psi\otimes \mathsf{Id}_\mathfrak{g})(r_2)\rangle=\sum\limits_{j}\langle\xi, \psi(x_j^\prime)\rangle \langle \eta,y_j^\prime\rangle =\langle\eta,\sum\limits_j\langle \psi^*\xi,x_j^\prime \rangle y_j^\prime\rangle=\langle\eta,r_2^\sharp(\psi^*\xi)\rangle.
\end{equation*}
Therefore, the condition \eqref{hom:con1} holds if and only if the condition \eqref{hom:con3} holds true. Moreover, the identities $\psi\circ \alpha=\alpha\circ \psi$ and $\phi\circ\alpha=\alpha\circ \phi$ implies that  
\begin{equation*}\label{condition-5.7}
\langle\xi,\psi[y,\phi(x)]\rangle=\Big\langle\psi^*(\xi),[y,\phi(x)]\Big\rangle=\langle\{\alpha\phi(x),\psi^*(\xi)\},\alpha^2(y)\rangle=\langle\{\phi(\alpha (x)),\psi^*(\xi)\},\alpha^2(y)\rangle,
\end{equation*}
\begin{equation*}\label{condition-5.8}
\langle \xi,[\psi(y),x]\rangle=\langle\{\alpha(x),\xi\},\alpha^2(\psi(y))\rangle=\langle\{\alpha(x),\xi\},\psi\alpha^2(y)\rangle=\langle\psi^*\{\alpha(x),\xi\},\alpha^2(y)\rangle.
\end{equation*}
Since $\alpha$ is an automorphism, it follows that the condition \eqref{hom:con2} holds if and only if the condition \eqref{hom:con4} holds true.

In other words, we can conclude that $(\phi,\psi)$ is a weak homomorphism from $r_1$ to $r_2$ if and only if $\phi$ is a hom-Lie algebra homomorphism, $\psi^*\circ(\alpha^{-1})^*=(\alpha^{-1})^*\circ \psi^*$, and the conditions \eqref{hom:con3}, \eqref{hom:con4} are satisfied. Equivalently, from the definition \ref{morphism of O-operators}, it follows that $(\phi,\psi)$ is a weak homomorphism from $r_1$ to $r_2$ if and only if the pair $(\phi,\psi^*)$ is a morphism of $\mathcal{O}$-operators from $r_1^\sharp$ to $r_2^\sharp$.
\end{proof}

\begin{definition}
Let $r$ be a skew-symmetric $r$-matrix on a regular hom-Lie algebra $(\mathfrak{g},[~,~],\alpha)$. Let us consider a linear sum $r_t:=r+t\tau\in \mathcal{H}^2$ for some $\tau\in \mathcal{H}^2$. If $r_t$ is a skew-symmetric $r$-matrix on the hom-Lie algebra $(\mathfrak{g},[~,~],\alpha)$, then it is called a linear deformation generated by the element $\tau\in \mathcal{H}^2$.  
\end{definition}

\begin{definition}
Let $r$ be a skew-symmetric $r$-matrix on a regular hom-Lie algebra $(\mathfrak{g},[~,~],\alpha)$. Then, linear deformations $r^1_t:=r+t\tau_1$ and $r^2_t:=r+t\tau_2$, generated by elements $\tau_1$ and $\tau_2$ in $\mathcal{H}^2$ are equivalent if there exists an element $x\in \mathfrak{g}$ satisfying $\alpha(x)=x$ such that the pair $(\mathsf{Id}+t \mathsf{ad}_x^{\dagger},\mathsf{Id}-t \mathsf{ad}_x^{\dagger})$ is a weak homomorphism from $r^2_t$ to $r^1_t$.
\end{definition}

\begin{proposition}
Let $(\mathfrak{g},[~,~],\alpha)$ be a hom-Lie algebra and $r\in \mathcal{H}^2$ be a skew-symmetric $r$-matrix on $(\mathfrak{g},[~,~],\alpha)$. Then,
\begin{enumerate}
\item An element $\tau\in \mathcal{H}^2$ generates a linear deformation of $r$ if and only if the induced map $\tau^\sharp:\mathfrak{g}^*\rightarrow\mathfrak{g}$ generates a linear deformation of the $\mathcal{O}$-operator $r^{\sharp}$.
\item Linear deformations $r^1_t:=r+t\tau_1$ and $r^2_t:=r+t\tau_2$ are equivalent if and only if the linear deformations $(r^1_t)^\sharp$ and $(r^2_t)^\sharp$ of the $\mathcal{O}$-operator $r^{\sharp}$ are equivalent.
\end{enumerate}
\end{proposition}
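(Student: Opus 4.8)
The plan is to deduce both statements from Theorem~\ref{correspondence theorem} together with the preceding proposition relating weak homomorphisms of $r$-matrices to homomorphisms of the associated $\mathcal{O}$-operators, the crucial structural fact being that the correspondence $r\mapsto r^{\sharp}$ defined by \eqref{operator to r-matrix} is linear. First I would record this linearity: since \eqref{operator to r-matrix} is linear in its $\mathfrak{g}\otimes\mathfrak{g}$-argument, one has $(r+t\tau)^{\sharp}=r^{\sharp}+t\,\tau^{\sharp}$, and since $\mathcal{H}^2$ is a vector space, $r+t\tau\in\mathcal{H}^2$ (it is skew-symmetric and satisfies $\tilde\alpha^{\otimes 2}(r+t\tau)=r+t\tau$) whenever $r,\tau\in\mathcal{H}^2$.

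For part~(1) this already suffices. By definition, $\tau$ generates a linear deformation of $r$ precisely when $r+t\tau$ is a skew-symmetric $r$-matrix for every scalar $t$. For each fixed $t$, Theorem~\ref{correspondence theorem} says this holds if and only if $(r+t\tau)^{\sharp}=r^{\sharp}+t\,\tau^{\sharp}$ is an $\mathcal{O}$-operator on $(\mathfrak{g},[~,~],\alpha)$ with respect to the coadjoint representation. Quantifying over all $t$, the family $r+t\tau$ consists of $r$-matrices if and only if $r^{\sharp}+t\,\tau^{\sharp}$ consists of $\mathcal{O}$-operators, i.e. if and only if $\tau^{\sharp}$ generates a linear deformation of $r^{\sharp}$. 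No computation beyond linearity of $\sharp$ is needed.

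For part~(2) the key algebraic input is the identity $(\mathsf{ad}_x^{\dagger})^{*}=\rho^{\star}(x)^{\dagger}$ for $x$ with $\alpha(x)=x$, which I would check by unwinding the definitions: on one hand $\langle(\mathsf{ad}_x^{\dagger})^{*}\xi,y\rangle=\langle\xi,\alpha^{-1}[x,y]\rangle$, while on the other hand $\rho^{\star}(x)^{\dagger}=\alpha^{*}\circ\mathsf{ad}^{\star}(x)$ and the defining formula of the coadjoint representation in Example~\ref{coadjoint rep} give $\langle\rho^{\star}(x)^{\dagger}\xi,y\rangle=\langle\xi,[\alpha^{-1}x,\alpha^{-1}y]\rangle$; these coincide because $\alpha^{-1}$ is an automorphism. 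Granting this, I would apply the preceding proposition to the scalar-parametrized pair $\phi=\mathsf{Id}+t\,\mathsf{ad}_x^{\dagger}$, $\psi=\mathsf{Id}-t\,\mathsf{ad}_x^{\dagger}$: the pair $(\phi,\psi)$ is a weak homomorphism from $r^2_t$ to $r^1_t$ if and only if $(\phi,\psi^{*})=\bigl(\mathsf{Id}+t\,\mathsf{ad}_x^{\dagger},\,\mathsf{Id}-t\,\rho^{\star}(x)^{\dagger}\bigr)$ is a homomorphism of $\mathcal{O}$-operators from $(r^2_t)^{\sharp}$ to $(r^1_t)^{\sharp}$. Comparing this with the definition of equivalence of linear deformations of $\mathcal{O}$-operators then identifies equivalence of the two $r$-matrix deformations with equivalence of the two induced $\mathcal{O}$-operator deformations.

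The main obstacle I expect is precisely this last matching step: reconciling the source/target direction (the $r$-matrix equivalence is phrased as a weak homomorphism from $r^2_t$ to $r^1_t$, whereas the $\mathcal{O}$-operator equivalence asks for a homomorphism between $(r^1_t)^{\sharp}$ and $(r^2_t)^{\sharp}$) and the sign of the second component ($-\rho^{\star}(x)^{\dagger}$ versus the $+\rho^{\star}(y)^{\dagger}$ appearing in the $\mathcal{O}$-operator definition). I anticipate handling this by using that equivalence of deformations is a symmetric relation and by replacing $x$ with $-x$, exploiting $\mathsf{ad}^{\dagger}_{-x}=-\mathsf{ad}_x^{\dagger}$ and $\rho^{\star}(-x)^{\dagger}=-\rho^{\star}(x)^{\dagger}$; some care is required because the auxiliary constraints on the Nijenhuis-type element $x$ are not all invariant under $x\mapsto -x$, so the converse direction must be checked to produce an admissible element. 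Once the conventions are aligned, both implications of part~(2) follow formally from the preceding proposition and the identity above.
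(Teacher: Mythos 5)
The paper states this proposition without proof, and your route --- linearity of $r\mapsto r^{\sharp}$ combined with Theorem \ref{correspondence theorem} for part (1), and with the weak-homomorphism/$\mathcal{O}$-operator-homomorphism correspondence for part (2) --- is exactly the intended one. Part (1) of your argument is complete and correct (including the point that $\tau\in\mathcal{H}^2$ guarantees $\alpha\circ\tau^{\sharp}=\tau^{\sharp}\circ(\alpha^{-1})^*$, so $\tau^{\sharp}$ lands in the right cochain space).

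In part (2), however, the final matching step --- which you yourself single out as the main obstacle --- does not close as you describe. Your identity $(\mathsf{ad}_x^{\dagger})^*=\rho^{\star}(x)^{\dagger}$ is a correct unwinding of Example \ref{coadjoint rep} as literally printed, and it gives $\psi^*=(\mathsf{Id}-t\,\mathsf{ad}_x^{\dagger})^*=\mathsf{Id}-t\,\rho^{\star}(x)^{\dagger}$, whereas equivalence of linear deformations of $\mathcal{O}$-operators demands a pair of the form $\bigl(\mathsf{Id}+t\,\mathsf{ad}_{x'}^{\dagger},\ \mathsf{Id}+t\,\rho^{\star}(x')^{\dagger}\bigr)$ with the \emph{same} $x'$ and the \emph{same} sign in both slots. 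The substitution $x\mapsto -x$ cannot repair this: it negates both components at once, yielding $\bigl(\mathsf{Id}-t\,\mathsf{ad}_x^{\dagger},\ \mathsf{Id}+t\,\rho^{\star}(x)^{\dagger}\bigr)$, which is still not of the required shape. The appeal to symmetry of the equivalence relation is also not available, since symmetry is neither proved in the paper nor formal (the inverse of $\mathsf{Id}+t\,\mathsf{ad}_x^{\dagger}$ is not again of that form). The missing idea is a sign convention rather than a trick with $-x$: for the coadjoint action to be a representation at all (already at $\alpha=\mathsf{Id}$ the printed formula gives an anti-representation), one must read $\langle\mathsf{ad}^{\star}(x)(\xi),y\rangle=-\langle\xi,[\alpha^{-1}(x),\alpha^{-2}(y)]\rangle$. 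With that convention $(\mathsf{ad}_x^{\dagger})^*=-\rho^{\star}(x)^{\dagger}$, hence $\psi^*=\mathsf{Id}+t\,\rho^{\star}(x)^{\dagger}$ on the nose, and the pair $(\mathsf{Id}+t\,\mathsf{ad}_x^{\dagger},\psi^*)$ is a homomorphism of exactly the required form from $(r_t^2)^{\sharp}$ to $(r_t^1)^{\sharp}$; one then simply applies the definition of equivalence with $T_t^1=(r_t^2)^{\sharp}$ and $T_t^2=(r_t^1)^{\sharp}$, so no direction reversal is needed either.
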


\subsection*{Formal deformations of $r$-matrices}
Let $r\in \mathcal{H}^2$ be a skew-symmetric $r$-matrix on a hom-Lie algebra $(\mathfrak{g},[~,~],\alpha)$. A formal sum $\textstyle{r_t:=r+\sum_{i\geq 1} t^i r_i}$ is called a formal deformation of $r$ if it satisfies 
$$[[r_t,r_t]]_{\mathfrak{g}_t}=0.$$
Here, $r_t\in \wedge^2\mathfrak{g}[[t]]$ and the bracket $[[~,~]]_{\mathfrak{g}_t}$ is the Lie bracket on the exterior algebra $\wedge^*\mathfrak{g}[[t]]$. 
Next, it easily follows that 
\begin{enumerate}
\item A formal sum $\textstyle{r_t:=r+\sum_{i\geq 1} t^i r_i}$ is a formal deformation of $r$ if and only if the induced map $r_t^{\sharp}:=r^{\sharp}+\sum_{i\geq 1} t^i r_i^\sharp$ is a formal deformation of the $\mathcal{O}$-operator $r^{\sharp}$.

\item The formal deformations $r^1_t:=r+\sum_{i\geq 1} t^i r^1_i$ and $r^2_t:=r+\sum_{i\geq 1} t^i r^2_i$ are equivalent if and only if the formal deformations $(r^1_t)^{\sharp}$ and $(r^1_t)^\sharp$ of the $\mathcal{O}$-operator $r^{\sharp}$ are equivalent.

\end{enumerate}

\section*{Conclusion}
In this work, we defined a differential graded Lie algebra whose Maurer-Cartan elements characterize $\mathcal{O}$-operators on hom-Lie algebras with respect to a representation. This characterization helps us to discuss formal deformation theory for 
an $\mathcal{O}$-operator in the regular case. However, with this approach, we can only study deformations of the operator, but the study of simultaneous deformations of hom-Lie algebra with a representation and the $\mathcal{O}$-operator is not possible. 

To study such simultaneous deformations, we define a relative Rota-Baxter hom-Lie algebra as a triple $((L,[~,~],\alpha),(V,\rho,\beta),T)$ consisting of a hom-Lie algebra $(L,[~,~],\alpha)$ with a representation $(V,\rho,\beta)$ and an $\mathcal{O}$-operator $T:V\rightarrow L$ on $(L,[~,~],\alpha)$ with respect to $(V,\rho,\beta)$. 
In our subsequent paper, we follow a more general approach of the recent work \cite{laza} to study deformations of relative Rota-Baxter hom-Lie algebras via $L_{\infty}$-algebras. In particular, we use the higher derived bracket construction of Voronov \cite{Voronov} to obtain an $L_{\infty}$-algebra starting from the graded Lie algebra obtained in this paper. The Maurer-Cartan elements of this $L_{\infty}$-algebra characterize relative Rota-Baxter hom-Lie algebras. Maurer-Cartan characterization of relative Rota-Baxter hom-Lie algebras also leads to the homotopy Rota-Baxter operators on $HL_{\infty}$-algebras \cite{HL-infty}. 
%\subsection{Quasi-triangular hom-Lie bialgebras}
%Let us recall that a hom-Lie bialgebra is a hom-Lie algebra $(\mathfrak{g},[~,~],\alpha)$ that is also equipped with a Lie coalgebra structure $\nabla:\mathfrak{g}\rightarrow \wedge^2\mathfrak{g}$ such that $\nabla$ is a $1$-cocycle on $\mathfrak{g}$ with coefficients in $\wedge^2\mathfrak{g}$.

\end{document}